\documentclass{amsart}
\usepackage{amssymb,delimset,booktabs,keytheorems}
\usepackage{mathtools}

\DeclarePairedDelimiter\floor{\lfloor}{\rfloor}
\usepackage{graphicx}
\usepackage{tabularx}
\usepackage{enumitem}
\setlist{itemsep=4pt, topsep=0pt, leftmargin=17pt}

\usepackage{floatrow}

\usepackage{xcolor}

\definecolor{BIT}{cmyk}{1, 0, 1, 0}

\usepackage[numbers, sort&compress, nonamebreak, merge, elide, longnamesfirst]{natbib}

\usepackage{tikz}
\usetikzlibrary{decorations.pathreplacing, calc, positioning}
\tikzset{
	edge/.style={semithick},
	ball/.style={shape=circle, minimum size=1mm, ball color=black, inner sep=0.5},
	ellipsis/.style={shape=circle, fill, inner sep=.5}}

\usepackage[pagebackref, 
colorlinks=true, 
linkcolor=blue,  
citecolor=BIT,  
urlcolor=blue,   
linktoc=all,
bookmarks=true]{hyperref}

\usepackage[capitalize,sort&compress]{cleveref}
\crefname{itm}{Condition}{Conditions}
\creflabelformat{itm}{~\upshape(#2#1#3)}
\crefname{def}{Def.}{Defs.}
\Crefname{def}{Definition}{Definitions}
\creflabelformat{def}{~\upshape(#2#1#3)}
\crefname{ineq}{Ineq.}{Ineqs.}
\Crefname{ineq}{Inequality}{Inequalities}
\creflabelformat{ineq}{~\upshape(#2#1#3)}
\crefname{conjecture}{Conjecture}{Conjectures}
\crefname{appendix}{Appendix}{Appendices}
\crefformat{section}{#2\S#1#3}
\crefrangeformat{section}{#3\S#1#4--#5\S#2#6}
\crefmultiformat{section}
  {#2\S#1#3}
  { and~#2\S#1#3}
  {, #2\S#1#3}
  {, and~#2\S#1#3}

\AddToHook{env/conjecture/begin}{\crefalias{theorem}{conjecture}}
\AddToHook{env/corollary/begin}{\crefalias{theorem}{corollary}}
\AddToHook{env/lemma/begin}{\crefalias{theorem}{lemma}}
\AddToHook{env/proposition/begin}{\crefalias{theorem}{proposition}}

\makeatletter
\newcommand\creflabel[2][\@currentcounter]{%
 \crefalias{\@currentcounter}{#1}\label{#2}}
\makeatother

\newtheorem{theorem}{Theorem}[section]
\newkeytheorem{corollary}[name=Corollary, sibling=theorem]
\newkeytheorem{conjecture}[name=Conjecture, sibling=theorem]
\newkeytheorem{lemma}[name=Lemma, sibling=theorem]
\newkeytheorem{proposition}[name=Proposition, sibling=theorem]
\newkeytheorem{problem}[name=Problem, sibling=theorem]

\theoremstyle{definition}

\newtheorem{example}[theorem]{Example}

\numberwithin{equation}{section}
\numberwithin{table}{section}
\numberwithin{figure}{section}

\allowbreak
\allowdisplaybreaks

\title[Schur positivity from signed elementary expansions]
{Schur positivity from signed elementary expansions: clique-spiders and spiders $S(a,b,2)$}

\author[D.G.L.~Wang]{David G.L. Wang}
\thanks{The work of David G.L. Wang was supported by the General Program of the
National Natural Science Foundation of China (Grant No.~12171034).}

\author[W.Z.Y.~Wang]{Watson Z.Y. Wang\textsuperscript{*}}
\address{School of Mathematics and Statistics, Beijing Institute of Technology,
Beijing 102400, China}
\email[D.G.L. Wang]{glw@bit.edu.cn}
\email[W.Z.Y. Wang]{watsonzywang@bit.edu.cn}
\thanks{\textsuperscript{*}Corresponding author.}

\subjclass[2020]{05E05, 05C15, 05A15}
\keywords{chromatic symmetric function, Schur positivity, $e$-positivity,
spider-conjoined graph, dominance order, Hall's marriage theorem}
\date{}

\hypersetup{
pdftitle={Schur positivity from signed elementary expansions: clique-spiders and spiders S(a,b,2)},
pdfauthor={David G.L. Wang and Watson Z.Y. Wang},
pdfkeywords={chromatic symmetric function, Schur positivity, e-positivity, spider-conjoined graph, spider},
linktoc=all
}

\begin{document}
\begin{abstract}
We prove a Schur alpha--omega lemma for chromatic symmetric functions. It bounds the partitions indexing nonzero Schur coefficients in terms of higher independence numbers, higher clique numbers, and the chromatic number. We then establish three equivalent dominance-matching
criteria---matrix, Hall, and order-ideal---that certify Schur
positivity from a fixed signed $e_I$-expansion. 

As applications, we obtain complete classifications of $e$-positivity and
Schur positivity for four basic families of $3$-clique-spiders. Here
$S^{ghk}_{rst}$ is formed by joining a common center to one vertex of each of
$K_r$, $K_s$, and $K_t$ by internally disjoint paths of lengths $g$, $h$, and
$k$, respectively. As a result, $S^{000}_{rst}$ is Schur positive exactly
when $r\ge st-1$, and every graph~$S^{100}_{rst}$ and $S^{010}_{rst}$ is Schur
positive. When $s=t$, the graph $S^{001}_{rst}$ is Schur positive; when $s>t$,
its Schur-positive members fall into four explicit parameter regimes. We also
introduce a path--clique bootstrap and use it to prove that every
spider~$S(a,b,2)$ is Schur positive. Finally, 
we prove that the spider~$S(a,b,2)$ for $a\ge b\ge2$ with
$3\nmid b$ is $e$-positive if and only if
$(a,b)\in\{(6,4),\,(12,4),\,(9,7)\}$, which advances the study of Tom's conjecture concerning
the $e$-positivity of spiders~$S(a,b,2)$.
\end{abstract}
\maketitle

\section{Introduction}

\citet{Sta95} introduced the chromatic symmetric function $X_G$ as a
generalization of Birkhoff's chromatic polynomial in the study of the Four Color Problem that records proper colorings
by color multiplicities.  For any basis $b=\{b_\lambda\}_\lambda$ of the algebra of symmetric functions, a graph is said to be \emph{$b$-positive} if every $b$-coefficient of $X_G$ is nonnegative.
Two central positivity questions ask
when this symmetric function is positive in the elementary or Schur
basis.  Schur positivity is representation-theoretically natural,
since Schur functions encode the irreducible characters of symmetric
groups; see \citet[Chap.~I]{Mac95B}.  The $e$-positivity is 
stronger than Schur positivity, since every elementary symmetric function is Schur positive.

The \emph{Stanley--Stembridge conjecture} \cite{SS93}
asserted that the incomparability graph of every $(3+1)$-free poset is
$e$-positive.  \citet[Theorems~2 and~4]{Gas96} proved that all graphs covered by this conjecture are Schur positive, see also \citet[Theorem~1.4]{Hai93}.
Recently, \citet{Hik24X} posted a proof of the Stanley--Stembridge conjecture; see
also \citet{Hik25,GMRWW25X,HHKKO25X}.
On the other hand, the Stanley--Stembridge conjecture can be equivalently reduced to that every claw-free interval graph is $e$-positive.

\citet{Sta98} posed the further conjecture, attributed to \citeauthor{Gas96}, that every claw-free graph is Schur positive; see also \citet{Gas99}.
Very recently, this conjecture was independently disproved by \citet{Pra26X} and \citet{MM26X}, who identified the same two counterexamples.
Subsequently, \citet{WZZ26X} constructed two infinite families of counterexamples.
The failure of this broad conjecture makes exact results for structured graph families particularly valuable, since such families may reveal the coefficient-level mechanisms governing Schur positivity beyond the exclusion of a single induced subgraph.

In his foundational paper on chromatic symmetric functions, \citet{Sta95} asked which $X_G$ are $e$-positive, a question that remains open even for trees and spiders. Spiders are central to this problem because they serve as reduction models for trees: \citet[Lemma~12]{DSv20} showed that every connected-partition type admitted by a tree is also admitted by an associated spider, while the criterion of \citet[Proposition~1.3.3]{Wol97D} turns a missing type into an obstruction to $e$-positivity. Recent progress on degree obstructions for $e$-positive trees together with the non-$e$-positivity of all $4$-legged spiders \citep[Theorems~1.5 and 1.6]{Tom26}, further highlights the importance of understanding the $e$-positivity of spiders.

Against this background, $3$-legged spiders form the natural
boundary case for branching.  Their behavior appears to depend
strongly on the shortest leg: \citet[Conjecture~5.1]{WW23-DAM}
conjectured that the only $e$-positive spiders with shortest leg at
least~$3$ are $S(8,5,3)$ and $S(14,9,5)$, and made partial progress
toward classifying those with shortest leg~$2$; see
\cref{thm:WW.e+.Sab2}.  Subsequently,
\citet[Conjecture~2.16]{Tom25-FPSAC} proposed the following complete
classification.

\begin{conjecture}[\citeauthor{Tom25-FPSAC}]\label{conj:Tom.e+.Sabc}
Let $a\ge b\ge c\ge 2$.
Then the spider $S=S(a,b,c)$ is $e$-positive if and only if
\[
(a,b,c)\in
\brk[c]1{
(5,3,2),\,
(6,4,2),\,
(8,6,2),\,
(9,7,2),\,
(12,4,2),\,
(8,5,3),\,
(14,9,5)
}.
\]
\end{conjecture}

In
contrast, the case in which the shortest leg length is~$1$ behaves
differently: infinite families of both $e$-positive and non-$e$-positive
spiders coexist, and no comparable classification is currently known; see
\citet{Tom26}.  These observations single out $S(a,b,2)$ as a natural
intermediate family.

Our first motivating problem is to determine the Schur positivity of
the spiders $S(a,b,2)$.  The resulting theorem provides an infinite
family of Schur-positive spiders containing claws and gives a uniform
result at the $3$-legged boundary described above.

The proof of \cref{thm:s+.Sab2} uses the path--clique bootstrap in
\cref{lem:path-clique.bootstrap}. The bootstrap reduces the Schur
positivity of a pendant-path extension to that of shorter extensions
and a clique-conjoined base graph. Thus clique-spiders arise as natural
base cases.
For $r,s,t\ge1$ and $g,h,k\ge0$, let $S^{ghk}_{rst}$ be the graph
formed by joining a common center to one vertex of each of $K_r$,
$K_s$, and $K_t$ by internally disjoint paths of lengths $g$, $h$,
and $k$, respectively. A path of length zero identifies its endpoints.
Under the convention $r\ge s\ge t$, the four basic families with
$g+h+k\le1$ are $S^{000}_{rst}$, $S^{100}_{rst}$,
$S^{010}_{rst}$, and $S^{001}_{rst}$; see
\cref{fig:basic.clique-spiders}.

\begin{figure}[ht]
\centering
\begingroup
\newcommand{\schematicbranch}[8]{%
  \def\branchoffset{0}%
  \ifnum#4=1\relax
    \def\branchoffset{.55}%
  \fi
  \coordinate (#1#2a) at (#3:\branchoffset);
  \coordinate (#1#2b) at
    ($ (#1#2a)+(#3:#6)+({#3+90}:#7) $);
  \coordinate (#1#2d) at
    ($ (#1#2a)+(#3:#6)+({#3-90}:#7) $);
  \ifnum#4=1\relax
    \draw[leg edge] (c#1)--(#1#2a);
  \fi
  \path[clique block] (#1#2a)--(#1#2b)--(#1#2d)--cycle;
  \node[clique label] at ($ (#1#2a)+(#3:#8) $) {$#5$};
}
\newcommand{\schematicfamily}[6]{%
  \begin{scope}[xshift=#1cm]
  \coordinate (c#2) at (0,0);
  \schematicbranch{#2}{r}{150}{#4}{K_r}{1.25}{.55}{.82}
  \schematicbranch{#2}{s}{30}{#5}{K_s}{1.00}{.45}{.65}
  \schematicbranch{#2}{t}{-90}{#6}{K_t}{.78}{.36}{.51}
  \node[center vertex] at (c#2) {};
  \node[family label] at (0,-1.72) {$#3$};
\end{scope}
}
\begin{tikzpicture}[
  clique block/.style={draw,line width=.35pt},
  leg edge/.style={semithick},
  clique label/.style={font=\scriptsize},
  center vertex/.style={circle,fill,inner sep=1.15pt},
  family label/.style={font=\small}
]
\schematicfamily{1.5}{A}{S^{000}_{rst}}{0}{0}{0}
\schematicfamily{5.3}{B}{S^{100}_{rst}}{1}{0}{0}
\schematicfamily{9.1}{C}{S^{010}_{rst}}{0}{1}{0}
\schematicfamily{12.9}{D}{S^{001}_{rst}}{0}{0}{1}
\end{tikzpicture}
\endgroup
\caption{The four basic $3$-clique-spider families. The filled dot
denotes the common center~$c$. Each triangle represents a clique, with
sizes reflecting $r\ge s\ge t$. A triangle incident with~$c$ represents
a clique containing~$c$; a segment from~$c$ to a disjoint triangle is
the unique leg edge.}
\label{fig:basic.clique-spiders}
\end{figure}

We present our main contributions in two parts. The first develops two
theoretical tools for Schur positivity. The second gives two kinds of
applications.

\noindent\textbf{1. Two theoretical tools.}
The first tool is the Schur alpha--omega lemma
(\cref{lem:alpha-omega.s}). It bounds the partitions indexing nonzero
Schur coefficients of $X_G$ in terms of the higher independence
numbers, higher clique numbers, and chromatic number of~$G$.

The second tool is a dominance-matching framework for certifying Schur
positivity from a fixed signed $e_I$-expansion. For a composition
$I=(i_1,\dots,i_k)$, write $e_I=e_{i_1}\cdots e_{i_k}$.
An \emph{$e_I$-expansion} of a symmetric function~$F$ is a finite expression
$F=\sum_I c_Ie_I$ indexed by compositions~$I$. It is \emph{signed} if
the coefficients~$c_I$ may have either sign, and \emph{fixed} means that
the chosen composition-indexed expression is retained. The positive
coefficients act as capacities and the negative coefficients as
demands. An admissible assignment uses only pairs
$\alpha\unlhd\beta$, for which $e_\alpha-e_\beta$ is Schur positive.
The matrix, Hall, and order-ideal criteria in
\cref{thm:s+.matrix,thm:s+.Hall,thm:s+.ideal} encode this assignment in
three equivalent ways; see \cref{thm:equivalence}.

\noindent\textbf{2. Two kinds of applications.}
Our first application gives complete $e$- and Schur-positivity
classifications for the four basic $3$-clique-spider families. These
classifications show how moving a single leg edge can change positivity, 
see~\cref{tab:basic.classification}.

\begin{table}[ht]
\caption{Positivity classification of the four basic
$3$-clique-spider families.}
\label{tab:basic.classification}
\centering
\footnotesize
\renewcommand{\arraystretch}{1.25}
\setlength{\tabcolsep}{4pt}
\begin{tabularx}{\textwidth}{@{}
>{\centering\arraybackslash}p{22mm}
>{\raggedright\arraybackslash}p{34mm}
>{\raggedright\arraybackslash}X
>{\raggedright\arraybackslash}p{27mm}
@{}}
\toprule
\textbf{Family}
& \textbf{$e$-positive}
& \textbf{Schur positive}
& \textbf{Results}\\
\midrule
$S^{000}_{rst}$
& Exactly when $t=1$
& Exactly when $r\ge st-1$
& $e$: \cref{thm:barbell}\newline
\phantom{$e$: }\cref{thm:e-.000}\newline
$s$: \cref{thm:s+.000}\\
\addlinespace[2pt]
$S^{100}_{rst}$, $S^{010}_{rst}$
& $t=1$, or $t=2$ and $r=s$
& Always
& $e$: \cref{thm:e+.**0}\newline
$s$: \cref{thm:s+.100}\newline
\phantom{$s$: }\cref{cor:s+.010}\\
\addlinespace[2pt]
$S^{001}_{rst}$
& $s=t=1$, or $(r,s,t)=(2,\,2,\,2)$
& Always if $s=t$; if $s>t$, then $r\ge 2s-1$ or $3t>r+s$;
$(r,s)=(2t-1,\,t+1)$ or $(r,s,t)=(6,\,6,\,4)$
& $e$: \cref{cor:e+.S001}\newline
$s$: \cref{cor:s+.010}\newline
\phantom{$s$: }\cref{thm:s.001}\\
\bottomrule
\end{tabularx}
\end{table}

The second application, 
with the aid of selected Schur positivity results in \cref{tab:basic.classification}, 
is a proof to the Schur positivity of all $3$-legged spiders with the shortest leg length $2$; see \cref{thm:s+.Sab2}. The proof also needs a
path--clique bootstrap in \cref{lem:path-clique.bootstrap}. 
As a separate $e$-positivity contribution,
\cref{thm:e+.Sab2.nonzero.residue} verifies \cref{conj:Tom.e+.Sabc} for all spiders of the form $S(a,b,2)$ with $3\nmid b$. This
advances \cref{conj:Tom.e+.Sabc} by leaving only the cases $3\mid b$ and $b\ge12$
unresolved.

The remainder of the paper is organized as follows. Preliminaries are
collected in \cref{sec:pre}. The Schur alpha--omega lemma is proved in
\cref{sec:alpha-omega}, followed by the positivity classification of the graphs $S^{000}_{rst}$ 
in~\cref{sec:000}. The dominance-matching criteria are developed in
\cref{sec:s+.criteria}. The remaining three basic-family classifications appear in
\cref{sec:general.expansion,sec:S100:S010,sec:S001}. Finally,
\cref{sec:Sab2} treats spiders $S(a,b,2)$, including both its Schur positivity
and the nonzero-residue cases of \cref{conj:Tom.e+.Sabc}.

\section{Preliminaries}\label{sec:pre}

Let $\mathbb Q$ be the field of rational numbers, 
and $\mathbb Z$ the ring of integers. Let
$\mathbb Z_{\ge 0}=\{0,1,2,\dots\}$ and
$\mathbb Z_{\ge 1}=\{1,2,\dots\}$.
For $n\in\mathbb Z_{\ge 1}$, write
$[n]=\{1,2,\dots,n\}$.
For any proposition~$P$, let $\mathbf{1}(P)$ be its \emph{indicator}, equal
to $1$ when $P$ is true and to $0$ otherwise.

\subsection{Compositions and symmetric functions}\label{sec:composition:SF}

A \emph{weak composition} of $n\in\mathbb Z_{\ge 0}$ is a finite or
infinite sequence of nonnegative integers with finite support and
sum~$n$, written $I\vDash_w n$. We identify sequences differing only
by trailing zeros and use the unique finite representative that is
empty or ends in a positive part; $\ell(I)$ is its number of parts.
The lowercase form of the symbol denotes the parts, so
$I=(i_1,\dots,i_{\ell(I)})$. Fixed-length notation retains trailing
zeros: $abc\vDash_w n$ means $(a,b,c)\in\mathbb Z_{\ge 0}^3$ with
$a+b+c=n$. The \emph{size} and \emph{associated monomial} of $I$ are
\[
\abs{I}=\sum_{j=1}^{\ell(I)}i_j
\quad\text{and}\quad
x^I=\prod_{j=1}^{\ell(I)}x_j^{i_j},
\]
respectively. For $n\in\mathbb Z_{\ge 1}$, a \emph{composition}
$I\vDash n$ is a weak composition of size~$n$ with positive parts.
The unique weak composition of $0$ is the \emph{empty composition}
$\epsilon$. Juxtaposition denotes \emph{concatenation}, and we set
$i_{-j}=i_{\ell(I)-j+1}$ for $1\le j\le\ell(I)$.

A \emph{partition} $\lambda$ is a weakly decreasing composition, 
denoted $\lambda\vdash n$. 
Repeated parts in $\lambda$ are written exponentially.  
The \emph{Young diagram} of $\lambda$, drawn in the French convention,
is the left-justified array of cells $(i,j)\in\mathbb Z_{\ge 1}^2$ such
that $1\le i\le\ell(\lambda)$ and $1\le j\le\lambda_i$, where row
indices increase from bottom to top and column indices from left to
right, each starting from $1$.
The \emph{conjugate} $\lambda'$ is the partition whose $j$th
part is the number of cells in column~$j$; equivalently,
\[
\lambda'_j
=
\abs{\{i\in[\ell(\lambda)]\colon \lambda_i\ge j\}}
\qquad(1\le j\le\lambda_1).
\]
In particular, $\lambda'_1=\ell(\lambda)$.
For any weak composition $I$,
denote by~$\rho(I)$ the underlying partition obtained by deleting its zero parts and
rearranging the remaining parts in weakly decreasing order. 

We work in the algebra $\mathrm{Sym}$ of symmetric functions over
$\mathbb Q$ and use its monomial, elementary, complete homogeneous,
power-sum, and Schur bases
$m_\lambda,e_\lambda,h_\lambda,p_\lambda,s_\lambda$; see, for
example, \citet{Mac95B}.  For any weak composition $I$, set $m_I=m_{\rho(I)}$ and
\[
e_I=e_{\rho(I)}=\prod_j e_{i_j},
\qquad
h_I=h_{\rho(I)}=\prod_j h_{i_j},
\qquad
p_I=p_{\rho(I)}=\prod_j p_{i_j},
\]
where $e_0=h_0=p_0=1$. Every basis element with a negative subscript is
interpreted as zero.  For $b\in\{m,e,s\}$, write $[b_\lambda]F$ for
the coefficient of $b_\lambda$ in a symmetric function $F$.  We say that $F$ is \emph{$b$-positive} if $[b_\lambda]F\ge 0$ for any partition $\lambda$. We call $F$ \emph{Schur positive} if and only if it is $s$-positive.

For any
weak composition~$I$, one has
\[
\brk[s]1{m_I}F=\brk[s]1{x^I}F.
\]
For partitions $\lambda$ and $\mu$ such that $\lambda_i\le\mu_i$ for
every~$i$, where missing parts are interpreted as zero, the
\emph{skew diagram} $\mu/\lambda$ consists of the cells of the Young
diagram of $\mu$ that do not belong to the Young diagram of $\lambda$.
Its \emph{size} is $\abs{\mu}-\abs{\lambda}$.
A skew diagram is called a \emph{horizontal strip} if it has no two
cells in the same column. It is called a \emph{vertical strip} if it
has no two cells in the same row.
The \emph{Hall involution} is the automorphism
$\omega\colon\mathrm{Sym}\to\mathrm{Sym}$ defined equivalently by any of the following three equations:
\begin{equation}\creflabel[def]{def:omega}
\omega(e_m)=h_m,
\qquad
\omega(s_\lambda)=s_{\lambda'},
\qquad
\omega(p_\lambda)=\varepsilon_\lambda p_\lambda, 
\end{equation}
where 
\[
\varepsilon_\lambda
=(-1)^{\abs{\lambda}-\ell(\lambda)}
\]
is the \emph{sign} of $\lambda$.
In particular, $\varepsilon_d=(-1)^{d-1}$ for any $d\in\mathbb Z_{\ge 1}$.

\citet[Chp.~I, Eqs.~(5.16)--(5.17), p.~73]{Mac95B} gives the
\emph{Pieri rules}.

\begin{proposition}[The Pieri rules]
\label{prop:Pieri}
For every partition $\lambda$ and integer $d\ge0$,
\[
e_d s_\lambda
=
\sum_\mu s_\mu
\quad\text{and}\quad
h_d s_\lambda
=
\sum_\mu s_\mu,
\]
where the first sum is over the partitions $\mu$ for which $\mu/\lambda$
is a vertical strip of size $d$,
and the second sum is over the partitions $\mu$ for which $\mu/\lambda$
is a horizontal strip of size $d$.
\end{proposition}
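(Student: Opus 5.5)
The plan is to derive both Pieri rules from the combinatorial (semistandard tableau) definition of Schur functions. I would first prove the $h_d$ rule directly and then obtain the $e_d$ rule by applying the Hall involution $\omega$ from \cref{def:omega}.

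For the $h_d$ rule, I would use $s_\lambda=\sum_T x^T$ over semistandard Young tableaux $T$ of shape $\lambda$, together with $h_d=\sum x^{J}$ over weakly increasing words $j_1\le\dots\le j_d$. The product $h_ds_\lambda$ is then a generating function over pairs $(T,w)$. Row-insert the letters of $w$ into $T$ one at a time, left to right, via Schensted insertion.

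The row bumping lemma gives the key facts:
\begin{itemize}
\item inserting weakly increasing letters successively produces new cells that lie strictly to the right of one another and weakly below one another;
\item hence the added cells form a horizontal strip of size $d$;
\item conversely, given a tableau $U$ of shape $\mu$ with $\mu/\lambda$ a horizontal strip, reverse-bumping the strip's cells from right to left recovers $T$ and a weakly increasing $w$ uniquely.
\end{itemize}
Since insertion preserves content, we get $h_ds_\lambda=\sum_\mu s_\mu$ over horizontal strips.

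The main obstacle is this bumping lemma: the comparison of bumping routes for consecutive insertions $x\le x'$. The second route stays strictly right of the first and ends weakly lower, so the new cells never share a column. I would prove it by induction row by row. In each row, $x'$ bumps an entry that is at least the entry bumped by $x$, and it does so at a strictly later position, because the first insertion placed $x\le x'$ in that row.

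For the $e_d$ rule, apply $\omega$ to the $h_d$ rule with $\lambda'$ in place of $\lambda$. This gives $e_ds_\lambda=\omega(h_ds_{\lambda'})=\sum s_{\nu'}$ over $\nu$ with $\nu/\lambda'$ a horizontal strip of size $d$. Conjugation turns horizontal strips into vertical strips, so setting $\mu=\nu'$ yields the first formula. Here we use that $\omega$ is a ring automorphism with $\omega(s_\lambda)=s_{\lambda'}$.

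Alternatively, the $e_d$ case can be done directly by column insertion with strictly increasing words, but the $\omega$ route avoids duplicating the bumping lemma.
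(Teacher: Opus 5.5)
Your argument is correct, but it takes a different route from the paper. The paper gives no proof at all; it quotes Macdonald (Chap.~I, (5.16)--(5.17)). There the $h_d$ rule is derived algebraically from skew Schur functions. One has $\langle h_d s_\lambda, s_\mu\rangle=\langle h_d, s_{\mu/\lambda}\rangle$, which is the coefficient of $x_1^d$ in $s_{\mu/\lambda}$. That coefficient is $1$ if $\mu/\lambda$ is a horizontal strip of size $d$ and $0$ otherwise. The $e_d$ rule then follows by applying $\omega$, exactly as in your last step. That step is not circular here: $\omega(s_\lambda)=s_{\lambda'}$ is built into the paper's definition of $\omega$, and it is classically proved from the Jacobi--Trudi identities without using Pieri. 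Your treatment of $h_d$ instead uses Schensted row insertion, as in Fulton's \emph{Young Tableaux}. It works directly from the tableau definition of $s_\lambda$ and yields an explicit content-preserving bijection, at the price of the bumping lemma. Macdonald's argument is two lines once the scalar product and skew Schur functions are available.

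One step should be filled in: the converse. The half of the bumping lemma you sketch, for $x\le x'$, does not by itself show that reverse-bumping the strip cells from right to left outputs a weakly increasing word. You need the companion statement for $x>x'$. In that case, in each row $x'$ bumps, at a weakly earlier position, an entry strictly smaller than the one bumped by $x$. So its route is weakly left and strictly longer, and its new cell lies weakly left of (and, in the French convention, strictly above) the first new cell. Now suppose two consecutively extracted letters satisfied $x_i>x_{i+1}$. Then the cell created by $x_{i+1}$ would be weakly left of the one created by $x_i$, contradicting the right-to-left order of removal. Note also that the rightmost cell of a horizontal strip $\mu/\lambda$ is a removable corner of $\mu$; otherwise the cell directly above it would lie in $\mu/\lambda$ in the same column. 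Hence each reverse bump is legitimate, and what remains is again a horizontal strip.
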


Throughout the paper, we use these rules to turn products involving
$e_d$ or $h_d$ into strip-indexed Schur expansions. This allows us
both to certify Schur positivity and to compute individual Schur
coefficients.

\subsection{Chromatic symmetric functions}\label{sec:csf}

All graphs $G$ in this paper are finite and simple.  
We write $V(G)$ for
the vertex set of $G$ and call $\abs{V(G)}$ the \emph{order} of $G$.  
A \emph{proper coloring} of $G$ assigns positive integers to its
vertices so that adjacent vertices receive different integers.
\citet{Sta95} defined the \emph{chromatic symmetric function} of $G$
by
\[
X_G=\sum_{\kappa\ \mathrm{proper}}\prod_{v\in V(G)}x_{\kappa(v)}.
\]
For example, for the complete graph $K_m$ on $m$ vertices, 
 $X_{K_m}=m!e_m$.
A graph is called \emph{$e$-positive} or \emph{Schur positive} when its chromatic symmetric function has that property.  

A \emph{rooted graph} $(G,u)$ is a graph with a distinguished vertex
$u$. For any rooted graphs $(G,u)$ and $(H,v)$,  and any $l\in\mathbb Z_{\ge 0}$,
let $P^l(G,H)$ be the graph obtained by joining $u$ and $v$ with a
path of length~$l$. We abbreviate $G^l=P^l(G,K_1)$. In particular, $G^0=G$.

For any composition $I=i_1\cdots i_{\ell(I)}$, the \emph{spider}
$S(I)$ is obtained by identifying one endpoint from each of the paths of
lengths $i_1,\dots,i_{\ell(I)}$. The common vertex is its
\emph{center}, and the paths are its \emph{legs}. 
More generally, for any rooted graphs $G_1$, $\dots$, $G_k$ 
and any integers $l_1$, $\dots$, $l_k\in\mathbb Z_{\ge 0}$, 
the \emph{spider-conjoined graph}
$S^{l_1\dotsm l_k}(G_1,\dots,G_k)$ 
is obtained from a common center by joining it to the
roots of each graph $G_i$ by a path of length $l_i$, respectively. 
One may omit the roots for any complete graph, since their
vertices are equivalent under automorphisms. 
 
The graphs $P^k(K_a,K_b)$ are called \emph{barbells}.
The graphs $P^l(K_a,K_1)$ are called \emph{lollipops}, denoted $K_a^l$ for short.
For a composition $I=i_1\cdots i_{\ell(I)}$, set
\[
w_I=i_1\prod_{j=2}^{\ell(I)}(i_j-1)
\quad\text{and}\quad
w_\epsilon=1.
\]
\citet[Theorem~3.7]{WZ25} gave 
the following formula for barbells.

\begin{theorem}[Barbell formula, \citeauthor{WZ25}]\label{thm:barbell}
Let $a,b\ge1$, $k\ge0$, and $n=a+b+k-1$.  Then
\[
\frac{X_{P^k(K_a,K_b)}}{(a-1)!(b-1)!}
=\sum_{I\in\mathcal A_n^{ab}}w_Ie_I
+\sum_{I\in\mathcal B_n^{ab}}f(I)e_I, 
\]
where $\mathcal A_n^{xz}
=\{I\vDash n\colon i_1\ge x,\ i_{-1}\ge z\}$, 
$\mathcal B_n^{xz}
=\{I\vDash n\colon \ell(I)\ge2,\ i_1+1\le x\le i_2,\ i_{-1}\ge z\}$, and
\begin{equation}\creflabel[def]{def:fI}
f(I)=\frac{(i_2-i_1)w_I}{(i_2-1)i_1}.
\end{equation}
In particular, 
$X_{P_n}=\sum_{I\vDash n}w_I e_I$ and
$X_{P^l(K_a,K_1)}
=(a-1)!\sum_{I\vDash a+l,\ i_{-1}\ge a}w_Ie_I$.
\end{theorem}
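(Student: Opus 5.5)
The statement is the barbell formula of \citeauthor{WZ25}. The plan is to prove it by induction on the path length~$k$, driven by the composition method, namely the triple-deletion / edge-removal recurrences for chromatic symmetric functions.

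\textbf{Base case $k=0$.} Here the graph $P^0(K_a,K_b)$ is two cliques sharing a single vertex, of order $n=a+b-1$. Its chromatic symmetric function can be computed directly. Choose colors for $K_a$ first; then $K_b$ must reuse the color of the shared vertex and otherwise avoid it. This expresses $X$ through products of $e$'s, which one converts via the standard identity $(a-1)!(b-1)!\bigl(e_ae_{b-1}\cdot\text{correction}\bigr)$. Alternatively, one can obtain it from the known formula for $K_a$ with a pendant clique, by applying the deletion recurrence to the edges of $K_b$ incident to the cut vertex. One then checks that the right-hand side of the statement, restricted to compositions of $a+b-1$ with $i_1\ge a$ or $i_1<a\le i_2$ and $i_{-1}\ge b$, reproduces this. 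For $k=0$ the condition $i_{-1}\ge b$ together with $i_1\ge a$ forces $\ell(I)\le2$, so the check is a finite computation.

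\textbf{Inductive step.} For $k\ge1$, apply the lollipop/path recurrence to the edge of the path adjacent to $K_b$. The standard relation, from Orellana--Scott triple deletion applied along a path, reads
\[
X_{P^{k}(K_a,K_b)} = X_{P^{k-1}(K_a,K_{b+1})}\text{-type terms} + \dots
\]
A more efficient route is to write $X_{P^k(K_a,K_b)}$ via the generating-function (``composition method'') formula for gluing a rooted graph to a path. In this formula the $e$-expansion of a graph extended by a path of length $l$ is obtained from the expansions of $K_a^{j}$ and $K_b^{l-j}$ by concatenating compositions, with weights $(i_j-1)$ at each junction. This explains the factor $w_I=i_1\prod(i_j-1)$. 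Concretely, I would first derive the lollipop formula
\[
X_{K_a^l}=(a-1)!\sum_{I\vDash a+l,\ i_{-1}\ge a}w_Ie_I
\]
by induction on $l$, using the recurrence $X_{G^{l}}=X_{G^{l-1}}\cdot e_1$-type relations; more precisely, the path rule of Stanley/Wolfe with $X_{P_n}=\sum w_Ie_I$. I would then splice two lollipops at the center of the path.

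\textbf{Splice and main obstacle.} The main obstacle is the splice step. Merging the $K_a$-end with the $K_b$-end produces the leading part $\sum_{\mathcal A}w_Ie_I$ from compositions whose first part already absorbs $K_a$ ($i_1\ge a$). The boundary compositions with $i_1<a$ must then be collected into the correction $f(I)e_I$ over $\mathcal B_n^{ab}$, where $i_1+1\le a\le i_2$. I would handle this by reversing the composition so that the $K_a$ side becomes the tail, applying the lollipop formula there, and then using the exchange identity for the first two parts. That identity rewrites $\frac{(a-1)!}{\dots}$-weighted terms with $i_1<a$ as $\frac{(i_2-i_1)}{(i_2-1)i_1}w_I$, after telescoping over the possible positions of $a$ between $i_1+1$ and $i_2$. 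Verifying this telescoping sum, and confirming that it yields exactly $f(I)$ with no leftover terms, is where I expect most of the work to lie. The special cases stated at the end then follow by setting $a=b=1$, which gives $\mathcal B=\emptyset$ and $X_{P_n}$, and $b=1$.
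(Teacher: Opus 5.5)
\textbf{Verdict: there is a genuine gap.} The paper gives no proof of this statement; it is quoted from \citet[Theorem~3.7]{WZ25}. So your proposal has to stand on its own, and as written it does not, because every load-bearing step is a placeholder.
\begin{itemize}
\item The inductive recurrence is left as ``$X_{P^{k-1}(K_a,K_{b+1})}$-type terms $+\dots$''.
\item The lollipop step rests on an unspecified ``$X_{G^l}=X_{G^{l-1}}\cdot e_1$-type relation''.
\item The base case invokes a ``standard identity'' with an unwritten ``correction''.
\item The splice, which must produce both the $\mathcal A$-sum and the $\mathcal B$-correction, is only described heuristically, and you explicitly defer the decisive verification.
\end{itemize}
The splice is not merely unfinished; it is unsupported. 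There is no general rule that builds the $e$-expansion of $X_{P^k(G,H)}$ by concatenating expansions of $G^j$ and $H^{k-j}$ with junction weights. In $\mathrm{Sym}$ the function $e_I$ depends only on the multiset of parts, so ``concatenating expansions'' is not even well defined until you fix and justify a particular composition-indexed lift. Doing that is exactly the content of the composition method you are invoking. Also, triple deletion cannot be applied ``along a path'', since it needs a triangle. Any recurrence therefore has to be set up at the junction with a clique, which you never do.

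\textbf{A route that works with tools already in the paper.} Take the case $H=K_a$, $(l,m)=(k-1,b)$ of \eqref{eq:path-clique.step}. It follows from \cref{thm:KPG}, and also directly from triple deletion on the triangles at the junction with $K_b$. Together with the lollipop formula of \citet{Tom25}, it gives for $k\ge1$
\[
\frac{X_{P^k(K_a,K_b)}}{(a-1)!\,(b-1)!}
=\frac{X_{P^{k-1}(K_a,K_{b+1})}}{(a-1)!\,b!}
+(b-1)\,e_b\sum_{I\vDash a+k-1,\ i_{-1}\ge a}w_Ie_I .
\]
Induction on $k$, for all $a,b$ simultaneously, then reduces the theorem to the case $k=0$ plus an explicit identity among composition sums, which you must actually check.

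For that check, the correct ``exchange identity'' is
\[
f(I)=(i_2-i_1)\prod_{j\ge3}(i_j-1)=w_I-w_{(i_2,i_1,i_3,\dots)} .
\]
So the $\mathcal B$-sum is a swap correction on the first two parts (note $e_I=e_{(i_2,i_1,i_3,\dots)}$), not a telescoping over positions of $a$.

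For $k=0$, your observation that only $\ell(I)\le2$ occurs is right; indeed $\mathcal A_n^{ab}=\{(n)\}$. \cref{lem:alpha-omega.e} with $\alpha\le2$ confirms that no longer terms appear on the left. But you still have to derive the coefficients $n$ of $e_n$ and $2j-n$ of $e_{(n-j,j)}$ for $\max(a,b)\le j\le a+b-2$, for example from \cref{thm:KPG} with $H=K_a$ and $l=0$.

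\textbf{A small slip.} The lollipop case in the stated form $i_{-1}\ge a$ comes from putting $a=1$ and using $P^l(K_a,K_1)\cong P^l(K_1,K_a)$, where $\mathcal B_{a+l}^{1a}=\emptyset$. Putting $b=1$ instead gives $\sum_{i_1\ge a}w_Ie_I+\sum_{\mathcal B_{a+l}^{a1}}f(I)e_I$. Its equality with $\sum_{i_{-1}\ge a}w_Ie_I$ is itself a nontrivial identity.
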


The formulas for $X_{P_n}$ and $X_{K_a^l}$ 
are due to \citet{SW16} and \citet{Tom25} respectively.
For any $r,s,t\in\mathbb Z_{\ge 1}$, we write
\[
S_{rst}^{ghk}=S^{ghk}(K_r,K_s,K_t), 
\]
and call them \emph{$3$-clique-spiders}.
Define the \emph{normalization} of $X_G$ for $G=S_{rst}^{ghk}$ by
\[
Y_G=\frac{X_G}{(r-1)!(s-1)!(t-1)!}.
\]
Since the normalizing factor is positive, the chromatic symmetric function $X_G$ and its normalization~$Y_G$ have the same
$e$-positivity and Schur-positivity properties.

\section{A Schur alpha--omega lemma}\label{sec:alpha-omega}

\citet[Lemmas~2.2 and~2.10, and Remark 2.3]{ST26} introduced the alpha--omega lemma
for the elementary expansion of a chromatic symmetric function and
extended it from the independence and clique numbers to their higher
analogues.  In this section, we prove the corresponding result for
Schur coefficients, which we call the \emph{Schur alpha--omega lemma}.

For a graph~$G$, a \emph{stable set} is a vertex set spanning no edge,
while a \emph{clique} is a vertex set inducing a complete graph.  The
\emph{independence number} $\alpha(G)$ and the \emph{clique number}
$\omega(G)$ are the maximum sizes of a stable set and a clique,
respectively.  More generally, for $k\ge1$, let
$\alpha_k(G)$ (resp.~$\omega_k(G)$) be the maximum total number
of vertices in $k$ pairwise vertex-disjoint stable sets (resp.~cliques). In particular, $\alpha_1(G)=\alpha(G)$ and
$\omega_1(G)=\omega(G)$.
The \emph{chromatic number} $\chi(G)$ is the minimum number of colors
in a proper coloring of $G$.

We begin by recalling the higher-parameter version of the elementary
alpha--omega lemma.

\begin{lemma}[{Elementary alpha--omega lemma}, \citeauthor{ST26}]
\label{lem:alpha-omega.e}
Let $G$ be a graph and $\lambda$ a partition such that
$[e_\lambda]X_G\ne0$.  Then, for any $k\ge1$,
\[
\lambda'_1+\cdots+\lambda'_k\le\alpha_k(G)
\qquad\text{and}\qquad
\lambda_1+\cdots+\lambda_k\ge \omega_k(G),
\]
where missing parts of $\lambda$ and $\lambda'$ are interpreted as zero.
Moreover, $\lambda_1\ge \chi(G)$.
\end{lemma}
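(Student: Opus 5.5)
The plan is to pass through the monomial expansion of $X_G$ and the unitriangularity of the transition matrix between the monomial and elementary bases with respect to dominance order. First, grouping proper colorings by their color classes, we have $X_G=\sum_{\pi}\tilde m_{\rho(\pi)}$, where $\pi$ runs over partitions of $V(G)$ into nonempty stable sets and $\tilde m_\mu$ is a positive multiple of $m_\mu$. Hence $[m_\mu]X_G\ne0$ if and only if $G$ admits a stable partition of type~$\mu$. We call such $\mu$ \emph{stable types} of~$G$.

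Next we invert to the elementary basis. By the Gale--Ryser theorem (see \citet[Chap.~I, (6.6)]{Mac95B}), $e_{\nu'}=m_\nu+\sum_{\mu\lhd\nu}M_{\nu\mu}m_\mu$ for nonnegative integers $M_{\nu\mu}$, since the coefficient of $m_\mu$ in $e_{\nu'}$ counts $0$-$1$ matrices with row sums $\nu'$ and column sums~$\mu$. This matrix is unitriangular, so its inverse is unitriangular as well: $m_\mu=e_{\mu'}+\sum_{\nu\lhd\mu}c_{\mu\nu}e_{\nu'}$. Substituting into the monomial expansion, we conclude that if $[e_\lambda]X_G\ne0$, then $\lambda'\unlhd\mu$ for some stable type $\mu$ of~$G$. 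Conjugating, this is equivalent to $\lambda\unrhd\mu'$. This triangularity step is the only nontrivial input, and it is classical.

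The three inequalities now follow from properties of a single stable type $\mu$ with $\lambda'\unlhd\mu$.
\begin{itemize}
\item \emph{Independence bound.} The $k$ largest blocks of the stable partition are $k$ disjoint stable sets. Hence $\lambda'_1+\cdots+\lambda'_k\le\mu_1+\cdots+\mu_k\le\alpha_k(G)$.
\item \emph{Chromatic bound.} Dominance between partitions of the same size forces $\ell(\lambda')\ge\ell(\mu)$. Since a stable partition is a proper coloring, $\ell(\mu)\ge\chi(G)$. Therefore $\lambda_1=\ell(\lambda')\ge\chi(G)$.
\item \emph{Clique bound.} From $\lambda\unrhd\mu'$ we get $\lambda_1+\cdots+\lambda_k\ge\mu'_1+\cdots+\mu'_k=\sum_B\min(\abs{B},k)$, where $B$ runs over the blocks. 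Fix $k$ disjoint cliques $Q_1,\dots,Q_k$ of total size $\omega_k(G)$, and let $U$ be their union. Each stable block $B$ meets each $Q_j$ in at most one vertex, so $\abs{B\cap U}\le\min(\abs{B},k)$. Summing over blocks gives $\sum_B\min(\abs{B},k)\ge\abs{U}=\omega_k(G)$.
\end{itemize}
Missing parts of $\lambda$ and $\lambda'$ are treated as zero throughout, which is consistent with the dominance partial sums used above.
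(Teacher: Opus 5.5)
Your proof is correct: every step holds, including the unitriangular inversion $m_\mu=e_{\mu'}+\sum_{\nu\lhd\mu}c_{\mu\nu}e_{\nu'}$ and the reading of all three bounds off a single stable type $\mu$ with $\lambda'\unlhd\mu$. The paper does not prove this lemma itself; it quotes it from \citet{ST26}. Your argument follows the same route as the paper's proof of the Schur alpha--omega lemma (\cref{lem:alpha-omega.s}). The only change is that your inverted $e$-to-$m$ matrix replaces the inverse Kostka numbers, and the conjugate in its leading term is exactly why $\lambda$ and $\lambda'$ trade roles between the two lemmas.
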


We now establish the Schur alpha--omega lemma. 
A partition $\lambda\vdash n$ is said to be \emph{dominated} by a partition $\mu\vdash n$, 
denoted $\lambda\unlhd\mu$, if
\[
\sum_{i=1}^k\lambda_i\le\sum_{i=1}^k\mu_i
\quad\text{for any $k\ge 1$}, 
\]
where missing parts are zeros. We use $\lhd$ for \emph{strict dominance} and
$\unrhd$ and $\rhd$ for the reverse and strict reverse relations,
respectively.  For weak compositions $I$ and $J$,
dominance means that for the underlying partitions:
\[
I\unlhd J\quad\Longleftrightarrow\quad\rho(I)\unlhd\rho(J).
\]

\begin{lemma}[Schur alpha--omega lemma]
\label{lem:alpha-omega.s}
Let $G$ be a graph and $\lambda$ a partition such that
$[s_\lambda]X_G\ne0$.  Then, for any $k\ge1$,
\[
\lambda_1+\cdots+\lambda_k\le\alpha_k(G)
\qquad\text{and}\qquad
\lambda_1'+\dots+\lambda_k'\ge \omega_k(G),
\]
where missing parts of $\lambda$ and $\lambda'$ are interpreted as zero.
Moreover, $\ell(\lambda)\ge\chi(G)$.
\end{lemma}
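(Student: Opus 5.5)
The plan is to derive the Schur statement from the elementary alpha--omega lemma (\cref{lem:alpha-omega.e}) by a dominance-transfer argument. The key is the Kostka expansion of an elementary symmetric function in the Schur basis:
\[
e_\mu=\sum_{\nu} K_{\nu'\mu}\, s_\nu .
\]
This follows from $h_\mu=\sum_\lambda K_{\lambda\mu}s_\lambda$ by applying $\omega$ via \cref{def:omega}. Alternatively, iterate the first Pieri rule of \cref{prop:Pieri} on $e_{\mu_1}e_{\mu_2}\cdots$ and read off column-strict fillings of the conjugate shape. The standard fact to use is that $K_{\lambda\mu}\ne0$ forces $\lambda\unrhd\mu$. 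Hence $[s_\nu]e_\mu\ne0$ implies $\nu'\unrhd\mu$, or equivalently $\nu\unlhd\mu'$, since conjugation reverses dominance.

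Next, write $X_G=\sum_\mu c_\mu e_\mu$ with $c_\mu=[e_\mu]X_G$. Fix $\lambda$ with $[s_\lambda]X_G\ne0$. Then
\[
[s_\lambda]X_G=\sum_\mu c_\mu K_{\lambda'\mu}\ne0 .
\]
Cancellation among the terms is irrelevant: a nonzero sum must contain at least one nonzero term. So there is some $\mu$ with $c_\mu\ne0$ and $K_{\lambda'\mu}\ne0$, which gives $\lambda'\unrhd\mu$ and $\lambda\unlhd\mu'$.

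Now apply \cref{lem:alpha-omega.e} to this $\mu$. For every $k\ge1$ this yields two chains:
\[
\lambda'_1+\cdots+\lambda'_k\ \ge\ \mu_1+\cdots+\mu_k\ \ge\ \omega_k(G),
\qquad
\lambda_1+\cdots+\lambda_k\ \le\ \mu'_1+\cdots+\mu'_k\ \le\ \alpha_k(G).
\]
Taking $k=1$ in the first chain gives $\ell(\lambda)=\lambda'_1\ge\mu_1\ge\chi(G)$, using the last assertion of \cref{lem:alpha-omega.e}.

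The only step needing care is the triangularity $K_{\lambda\mu}\ne0\Rightarrow\lambda\unrhd\mu$ and its translation through conjugation. This is classical (see \citet[Chap.~I]{Mac95B}): in a semistandard tableau of content $\mu$, the entries $\le k$ lie in the first $k$ rows. I would cite it rather than reprove it, and check that the convention $\lambda\unlhd\mu\Leftrightarrow\mu'\unlhd\lambda'$ is applied in the right direction. There is no genuine obstacle, because the nonvanishing argument sidesteps any sign cancellation in the $e$-expansion.
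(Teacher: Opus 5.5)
Your proof is correct, but it takes a different route from the paper's.

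The paper expands $X_G=\sum_\mu a_\mu m_\mu$ in the monomial basis and uses the triangularity of the inverse Kostka matrix, $m_\mu=\sum_{\lambda\unlhd\mu}K^{-1}_{\mu\lambda}s_\lambda$. So $[s_\lambda]X_G\ne0$ produces an actual proper coloring whose type $\mu$ satisfies $\mu\unrhd\lambda$. It then derives all three bounds directly from that coloring: color classes are stable sets, each color class meets each clique in at most one vertex, and the coloring uses $\ell(\mu)=\mu'_1$ colors.

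You instead work in the elementary basis. The triangularity of $e_\mu=\sum_\nu K_{\nu'\mu}s_\nu$ gives some $\mu$ with $[e_\mu]X_G\ne0$ and $\lambda\unlhd\mu'$. Every inequality is then inherited from \cref{lem:alpha-omega.e} by monotonicity of partial sums under dominance.

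Your version is shorter and shows that the Schur lemma is a formal corollary of the elementary one. The roles of $\lambda$ and $\lambda'$ swap precisely because $e_\mu$ is Schur-supported below $\mu'$. However, it rests entirely on the cited lemma, in particular on its clause $\lambda_1\ge\chi(G)$. The paper's version is self-contained and exhibits the coloring that witnesses the bounds.

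Neither route loses information. The down-closure of the coloring types of $G$ coincides with the down-closure of $\{\mu' : [e_\mu]X_G\ne0\}$. Hence both arguments confine the Schur support to the same order ideal.
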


\begin{proof}
Suppose that
\[
X_G=\sum_{\mu\vdash \abs{V(G)}} a_\mu\,m_\mu.
\]
Recall that
\[
m_\mu=\sum_{\lambda\unlhd\mu} K^{-1}_{\mu \lambda} s_\lambda,
\]
where $K^{-1}_{\mu \lambda}\in\mathbb Z$ are the \emph{inverse Kostka numbers}.
Since $K_{\mu\lambda}^{-1}=0$ unless
$\lambda\unlhd\mu$, we find
\[
[s_\lambda]X_G
=
\sum_{\mu\unrhd\lambda}a_\mu
K_{\mu\lambda}^{-1}.
\]

Suppose that $[s_\lambda]X_G\ne 0$. Then
there exists $\mu\unrhd\lambda$ such that 
$a_\mu\ne 0$.
Thus there exists a proper coloring~\(\kappa\) whose nonempty color classes form the partition~$\mu$, i.e., 
\[
\abs{\kappa^{-1}(i)}=\mu_i.
\]
Since each color class is an independent set,
by the definition of $\alpha_k(G)$,
\[
  \alpha_k(G)
  \geq \mu_1+\cdots+\mu_k
  \geq \lambda_1+\cdots+\lambda_k,
\]
where the second inequality follows from
\(\mu\unrhd\lambda\).

We next prove the corresponding clique inequality. 
Let \(C_1,\ldots,C_k\) be pairwise disjoint cliques of~$G$ 
whose total order is $\omega_k(G)$.
Since each color class meets each clique in at
most one vertex, we can deduce that
\[
\omega_k(G)
=\lvert C_1\cup\cdots\cup C_k\rvert
=\sum_{i\geq 1}
\left\lvert
\kappa^{-1}(i)\cap(C_1\cup\cdots\cup C_k)
\right\rvert
\leq \sum_{i\geq 1}\min\{\mu_i,k\}
=\mu'_1+\cdots+\mu'_k.
\]
The last equality follows by double counting: both sides count the
cells in the first $k$ columns of the Young diagram of $\mu$.
Consequently,
\[
\omega_k(G)
\le\mu'_1+\cdots+\mu'_k
\le\lambda'_1+\cdots+\lambda'_k,
\]
where the second inequality follows because conjugation reverses
dominance.

Finally, since the coloring \(\kappa\) uses \(\ell(\mu)\) colors, we have
\[
  \chi(G)
  \leq\ell(\mu)
  =\mu'_1
  \leq\lambda'_1
  =\ell(\lambda),
\]
where the middle inequality again follows from
\(\mu'\unlhd\lambda'\).
\end{proof}

In comparison with \cref{lem:alpha-omega.e}, 
the roles of $\lambda$ and $\lambda'$ in \cref{lem:alpha-omega.s} are
interchanged. \cref{lem:alpha-omega.s} will be used in \cref{sec:000}.
The final result $\chi(G)\le \ell(\lambda)$ is strictly stronger than
$\omega(G)\le  \ell(\lambda)$ if and only if
$\chi(G)>\omega(G)$. 

\begin{example}
The \emph{wheel} $W_n$ is the graph on $n$ vertices obtained from 
the cycle $C_{n-1}$ and a vertex~$v$ by linking~$v$ to all the other $n-1$ vertices.
One may compute by Maple and obtain
\[
X_{W_6}=20s_{(2^2,1^2)}+60s_{(2,1^4)}+240s_{1^6}.
\]
Here for every partition $\lambda$ with $[s_\lambda]X_{W_6}\ne 0$, we have
\[
\lambda_1\le \alpha(W_6)=2,\quad
\ell(\lambda)\ge \omega(W_6)=3,
\quad\text{and}\quad
\ell(\lambda)\ge \chi(W_6)=4.
\]
As a consequence, 
the stronger result $\ell(\lambda)\ge \chi(W_6)$ implies that $[s_{2^3}]X_{W_6}=0$.
\end{example}

\section{\texorpdfstring{Positivity characterization for clique-spiders
$S_{rst}^{000}$}{Positivity characterization for clique-spiders S000}}
\label{sec:000}

In this section, we determine the $e$- and Schur positivity of
the graphs $S_{rst}^{000}$. When $t=1$, this graph is the barbell
$P^0(K_s,K_r)$ and is $e$-positive by \cref{thm:barbell}.  Hence,
throughout this section, we can suppose that
\[
r\ge s\ge t\ge2,
\quad
G=S_{rst}^{000}, 
\quad\text{and}\quad
n=r+s+t-2.
\]
Then the graph $G$ has $n$ vertices.  We begin by expressing the symmetric
function $\omega Y_G$ in a form that
will be useful in both positivity arguments.

\begin{theorem}\label{thm:omegaY.000}
Let $r\ge s\ge t\ge 2$, $G=S_{rst}^{000}$
and $n=r+s+t-2$. Then
\[
\omega Y_G
=
\sum_{i=0}^{r-1}
\sum_{j=0}^{s-1}
\sum_{k=0}^{t-1}
p_{n-i-j-k}
h_{(i,j,k)}.
\]
\end{theorem}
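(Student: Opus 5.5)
The plan is to compute $\omega X_G$ from Stanley's power-sum expansion and use the fact that $G$ is made of three cliques $K_r,K_s,K_t$ glued at the single center vertex $c$. Recall Stanley's formula
\[
X_G=\sum_{S\subseteq E(G)}(-1)^{\abs{S}}p_{\lambda(S)},
\]
where $\lambda(S)$ is the partition of component sizes of the spanning subgraph $(V(G),S)$. Applying \cref{def:omega}, and using that a spanning subgraph with $c(S)$ components satisfies $\varepsilon_{\lambda(S)}=(-1)^{n-c(S)}$, we get
\[
\omega X_G=\sum_{S\subseteq E(G)}(-1)^{\abs{S}-n+c(S)}p_{\lambda(S)}.
\]
The exponent $\abs{S}-n+c(S)$ is the cyclomatic number of $S$. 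It is additive over components, so the signed weight factors over the components of $S$.

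Since $E(G)$ is the disjoint union of the three clique edge sets, we write $S=S_1\cup S_2\cup S_3$ with $S_1\subseteq E(K_r)$, and similarly for $S_2,S_3$. Because $c$ is a cut vertex, every component of $S$ not containing $c$ lies inside a single $K_r\setminus c$, $K_s\setminus c$ or $K_t\setminus c$. The component $C$ containing $c$ is the union of the three components of $c$ in the $S_\ell$'s. Suppose these three components have $i,j,k$ vertices other than $c$. Then $\abs{C}=1+i+j+k$, and its cyclomatic number is the sum of the three pieces' cyclomatic numbers.

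The sum therefore factors as follows.
\begin{itemize}
\item \emph{Outside $c$'s component.} For fixed $i$, the vertices of $K_r$ outside $C$ form a clique on $r-1-i$ vertices with arbitrary edge subset. Their signed sum is $\omega X_{K_{r-1-i}}=(r-1-i)!\,h_{r-1-i}$.
\item \emph{Inside $c$'s component.} The piece of $C$ in $K_r$ is obtained by choosing the $i$ vertices in $\binom{r-1}{i}$ ways and a connected spanning edge set on $i+1$ labelled vertices of a clique. It contributes the signed count $\sum(-1)^{\abs{T}-i}$ over connected spanning $T\subseteq E(K_{i+1})$, which equals $i!$. This is the standard fact that this quantity is $\abs{\mu(\hat0,\hat1)}$ in the partition lattice, equivalently the linear coefficient of the chromatic polynomial of $K_{i+1}$ up to sign.
\end{itemize}
Each clique thus contributes $\binom{r-1}{i}\,i!\,(r-1-i)!=(r-1)!$, times $h_{r-1-i}$. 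Summing over $i,j,k$ gives
\[
\omega X_G=(r-1)!(s-1)!(t-1)!\sum_{i,j,k}p_{1+i+j+k}\,h_{r-1-i}h_{s-1-j}h_{t-1-k}.
\]
Reindexing $i\mapsto r-1-i$, $j\mapsto s-1-j$, $k\mapsto t-1-k$ turns the power-sum index into $n-i-j-k$, which yields the claimed formula after dividing by the normalizing factor.

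The main obstacle is bookkeeping the signs, namely verifying that $(-1)^{\abs{S}}\varepsilon_{\lambda(S)}$ is exactly the product of the per-component cyclomatic signs. The only nontrivial input is the connected signed count $i!$ for $K_{i+1}$. If needed, I would prove that count separately by induction, or by reading off the coefficient of $q$ in $q(q-1)\cdots(q-i)$.
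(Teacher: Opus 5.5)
Your argument is correct, but it takes a genuinely different route from the paper.

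The paper works with colorings rather than edge subsets. It fixes the color $q$ of the center and notes that the $r-1$ other vertices of $K_r$ are all adjacent to the center. They therefore contribute $(r-1)!\,e^{(q)}_{r-1}$, where $e^{(q)}_m$ omits $x_q$. A one-line generating-function identity gives $e^{(q)}_m=\sum_{j=0}^{m}(-x_q)^{m-j}e_j$. This yields the pre-involution formula $Y_G=\sum_{i,j,k}\varepsilon_{n-i-j-k}\,p_{n-i-j-k}\,e_{(i,j,k)}$, and $\omega$ is applied only at the end.

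You instead start from Stanley's edge-subset power-sum expansion, working on the $\omega$ side from the beginning. You use the fact that the center is a cut vertex, so both the cyclomatic sign and $p_{\lambda(S)}$ factor over the three blocks. You then import the fact that the signed count of connected spanning subgraphs of $K_{i+1}$ is $i!$, after which $\binom{r-1}{i}\,i!\,(r-1-i)!=(r-1)!$ collapses each block.

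The paper's route is shorter and self-contained. However, it relies on every non-central vertex being adjacent to the center, which is what makes the color restriction a clean $e^{(q)}_m$. Your route needs one outside input but is more structural: it is a cut-vertex localization of Stanley's edge-subset expansion of $\omega X_G$. It extends verbatim to arbitrary rooted graphs $H$ glued at a common root $c$: the signed count of connected spanning subgraphs of $H[\{c\}\cup A]$ replaces $i!$, and $\omega X_{H-(\{c\}\cup A)}$ replaces $(r-1-i)!\,h_{r-1-i}$.
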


\begin{proof}
We claim that, for any $m\ge0$ and $q\ge1$, if $e_m^{(q)}$ denotes the
elementary symmetric function~$e_m$ in all variables $x_1,x_2,\dots$
except~$x_q$, then
\[
e_m^{(q)}
=
\sum_{j=0}^m(-x_q)^{m-j}e_j.
\]
In fact, using generating functions, we obtain
\[
\sum_{m\ge0}e_m^{(q)}z^m
=
\prod_{\ell\ne q}(1+x_\ell z)
=
\frac{1}{1+x_qz}
\brk4{\prod_{\ell\ge1}(1+x_\ell z)}
=
\brk4{\sum_{j\ge0}(-x_qz)^j}
\brk4{\sum_{m\ge0}e_mz^m}.
\]
Extracting the coefficient of $z^m$ and replacing $j$ by $m-j$ gives the
claimed identity.

We now compute $Y_G$.  Fix the color $q$ of the center~$c$.
The $r-1$ vertices in $K_r-c$ receive distinct colors, none equal to~$q$;
their contribution to $X_G$ is $(r-1)!e_{r-1}^{(q)}$.
Similarly, the subgraphs $K_s-c$ and
$K_t-c$ contribute $(s-1)!e_{s-1}^{(q)}$ and $(t-1)!e_{t-1}^{(q)}$ to $X_G$, respectively.
Therefore,
\[
Y_G
=
\sum_{q\ge 1}
x_q e_{r-1}^{(q)}e_{s-1}^{(q)}e_{t-1}^{(q)}.
\]
The claim gives, for every $m\in\{r,s,t\}$,
\[
e_{m-1}^{(q)}
=
\sum_{i=0}^{m-1}
\varepsilon_{m-i}
x_q^{m-1-i}e_i.
\]
Substituting these three identities into the formula for $Y_G$, we obtain
\[
Y_G
=
\sum_{i=0}^{r-1}
\sum_{j=0}^{s-1}
\sum_{k=0}^{t-1}
\varepsilon_{n-i-j-k}
\sum_{q\ge 1}x_q^{n-i-j-k}
e_{(i,j,k)}
=
\sum_{i=0}^{r-1}
\sum_{j=0}^{s-1}
\sum_{k=0}^{t-1}
\varepsilon_{n-i-j-k}
p_{n-i-j-k}
e_{(i,j,k)}.
\]
Applying the Hall involution $\omega$ to both sides by using \cref{def:omega},
we obtain the desired formula.
\end{proof}

Now we can show that $G$ is never $e$-positive.

\begin{theorem}\label{thm:e-.000}
Let $r\ge s\ge t\ge2$, $G=S_{rst}^{000}$
and $n=r+s+t-2$. Then 
\[
\brk[s]1{e_{(r,\,s+t-2)}}Y_G
=
\begin{dcases*}
-n,& if $r<s+t-2$,\\
-r,& if $r=s+t-2$,\\
r-n,& if $r>s+t-2$.
\end{dcases*}
\]
As a consequence, $G$ is not $e$-positive.
\end{theorem}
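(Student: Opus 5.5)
The plan is to read the coefficient directly off the power-sum/elementary form of $Y_G$ obtained in the proof of \cref{thm:omegaY.000}, namely
\[
Y_G=\sum_{i=0}^{r-1}\sum_{j=0}^{s-1}\sum_{k=0}^{t-1}\varepsilon_{m}\,p_{m}\,e_{(i,j,k)},\qquad m=n-i-j-k .
\]
Since $i+j+k\le n-1$, every such $m$ satisfies $m\ge1$.

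First I will expand each $\varepsilon_m p_m$ in the elementary basis using the classical Newton--Girard expansion
\[
p_m=\sum_{\lambda\vdash m}(-1)^{m-\ell(\lambda)}\frac{m\,(\ell(\lambda)-1)!}{\prod_{d}m_d(\lambda)!}\,e_\lambda ,
\]
where $m_d(\lambda)$ denotes the multiplicity of $d$ in $\lambda$. Multiplying by $\varepsilon_m=(-1)^{m-1}$ gives
\[
\varepsilon_m p_m=\sum_{\lambda\vdash m}(-1)^{\ell(\lambda)-1}\frac{m\,(\ell(\lambda)-1)!}{\prod_d m_d(\lambda)!}\,e_\lambda .
\]
For a one-part $\lambda=(m)$ the coefficient is $+m$. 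For a two-part $\lambda$ it is $-m$ if the parts differ and $-m/2$ if they are equal. Because $\{e_\lambda\}$ is a basis, the coefficient of $e_{(r,s+t-2)}$ in $Y_G$ is the sum of the contributions of all pairs consisting of a triple $(i,j,k)$ and a partition $\lambda\vdash m$ such that the nonzero entries of $(i,j,k)$ together with the parts of $\lambda$ form exactly the multiset $\{r,s+t-2\}$.

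Next I will enumerate these pairs. Since $\lambda$ is nonempty, either $\lambda$ has two parts and $i=j=k=0$, or $\lambda$ has one part and exactly one of $i,j,k$ is nonzero.

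In the first case $\lambda=(r,s+t-2)$ and $m=n$. This contributes $-n$, except when $r=s+t-2$. In that case $\lambda=(r,r)$ and the contribution is $-n/2=-r$.

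In the second case there are two options.
\begin{itemize}
\item $\lambda=(r)$ and the single nonzero coordinate equals $s+t-2$. The constraints $j\le s-1$ and $k\le t-1$ rule out $j$ and $k$, because $t\ge2$ and $s\ge2$. The choice $i=s+t-2$ is allowed exactly when $s+t-2\le r-1$, that is, when $r>s+t-2$. It contributes $+r$.
\item $\lambda=(s+t-2)$ and the nonzero coordinate equals $r$. This is impossible, since $i\le r-1$ and $j,k\le s-1<r$.
\end{itemize}
Adding the contributions yields the three cases of the statement: $-n$, $-r$, and $r-n$.

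Finally, each value is negative. This is clear for $-n$ and $-r$, and $r-n=-(s+t-2)<0$. Hence $G$ is not $e$-positive.

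The only delicate step is the bookkeeping in the enumeration. Specifically, one must not miss the equal-parts case $r=s+t-2$, which produces the factor $1/2$, and one must check the range constraints $i\le r-1$, $j\le s-1$, $k\le t-1$ carefully, since these decide which one-part terms survive.
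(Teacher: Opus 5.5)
Your proof is correct and is essentially the paper's argument. The paper applies $\omega$ and extracts the coefficient of $h_{(r,s+t-2)}$ from $\omega Y_G=\sum p_{n-i-j-k}h_{(i,j,k)}$ via the $\omega$-image of the Girard--Waring formula, which is exactly your $e$-basis expansion of $\varepsilon_m p_m$ transported by $\omega$, with the same enumeration: $(i,j,k)=(0,0,0)$ always contributes, and $(s+t-2,0,0)$ contributes additionally when $r>s+t-2$. You also make explicit the factor $1/2$ in the equal-parts case $r=s+t-2$, which the paper leaves implicit.
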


\begin{proof}
By \cref{def:omega},
\[
[e_\lambda]Y_G
=[h_\lambda]\omega Y_G.
\] 
Applying the involution $\omega$ to the
Girard--Waring power-sum formula of \citet[Eq.~(8)]{Gou99}, 
we obtain
\begin{equation}\label{eq:power-sum-two-h-parts}
p_n
=\varepsilon_n
\sum_{\lambda\vdash n}
\varepsilon_\lambda
\frac{n(\ell(\lambda)-1)!}
{\prod_{i\ge1}\mathsf v_i(\lambda)!}\,h_\lambda
=
\sum_{\lambda\vdash n}
(-1)^{\ell(\lambda)-1}
\frac{n(\ell(\lambda)-1)!}
{\prod_{i\ge1}\mathsf v_i(\lambda)!}\,h_\lambda, 
\end{equation}
where $\mathsf v_i(\lambda)$ is the multiplicity of the part $i$
in $\lambda$.
Let $L=s+t-2$.
We extract $h_{(r,L)}$ from the formula in \cref{thm:omegaY.000}.
Since
\[
j\le s-1<r,
\qquad
k\le t-1<r,
\qquad\text{and}\qquad
j,k<L,
\]
neither $h_j$ nor $h_k$ can supply a positive part of
$h_{(r,L)}$. Hence every contributing term has $j=k=0$.
Moreover, $i\le r-1$, so $h_i$ cannot supply the part $r$; it can
supply the part $L$ exactly when $L<r$. 
If $r\le L$, then only $(i,j,k)=(0,0,0)$ contributes,
and the coefficient $\brk[s]1{h_{(r,\,L)}}\omega Y_G$
is directly obtained by \cref{eq:power-sum-two-h-parts}.
If $r>L$, then the term with
$(i,j,k)=(0,0,0)$ contributes $-n$, while the term with
$(i,j,k)=(L,0,0)$ is $p_r h_L$ and contributes $r$. Therefore
$\brk[s]1{h_{(r,\,L)}}\omega Y_G
=-n+r$.

Since all three values are negative, $G$ is not $e$-positive.
\end{proof}

We now turn to the Schur-positivity characterization of $G$.
A \emph{stable partition} of a graph~$H$ is a partition of $V(H)$
into stable sets.  Its \emph{type} is the partition of
the number $\abs{V(H)}$ of vertices formed by the block sizes.  \citet{Sta98} defined a
graph~$H$ to be \emph{nice} if, for every pair of partitions
$\mu\unlhd\lambda$ of $\abs{V(H)}$, the existence of a stable
partition of type~$\lambda$ implies the existence of a stable partition of
type~$\mu$.  \citet[Proposition~1.5]{Sta98} proved the following
necessary condition for Schur positivity.

\begin{proposition}[\citeauthor{Sta98}]\label{prop:s+.nice}
Every Schur-positive graph is nice.
\end{proposition}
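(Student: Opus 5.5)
The plan is to express the stable-partition information in $X_G$ through the monomial expansion and then move it to Schur functions by Kostka triangularity. Write
\[
X_G=\sum_{\lambda}a_\lambda m_\lambda,
\]
where $a_\lambda$ counts proper colorings whose color-class sizes, listed in the first $\ell(\lambda)$ colors in order, are $\lambda_1,\lambda_2,\dots$. By symmetry of $X_G$, the coefficient $a_\lambda$ is a positive multiple of the number of stable partitions of type~$\lambda$. Thus the statement says: if $a_\lambda>0$ and $\mu\unlhd\lambda$, then $a_\mu>0$.

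Next, substitute the Kostka expansion $s_\nu=\sum_{\mu\unlhd\nu}K_{\nu\mu}m_\mu$, which gives
\[
a_\mu=\sum_{\nu\unrhd\mu}K_{\nu\mu}\,[s_\nu]X_G .
\]
By hypothesis $[s_\nu]X_G\ge0$, and every $K_{\nu\mu}\ge0$. It therefore suffices to exhibit one $\nu\unrhd\mu$ with $[s_\nu]X_G>0$ and $K_{\nu\mu}>0$.

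Now suppose $a_\lambda>0$. Since $a_\lambda=\sum_{\nu\unrhd\lambda}K_{\nu\lambda}[s_\nu]X_G$ is a sum of nonnegative terms, some $\nu\unrhd\lambda$ satisfies $[s_\nu]X_G>0$. For $\mu\unlhd\lambda$, transitivity gives $\nu\unrhd\mu$. The standard fact that $K_{\nu\mu}>0$ whenever $\nu\unrhd\mu$ then yields $a_\mu\ge K_{\nu\mu}[s_\nu]X_G>0$. Hence $G$ has a stable partition of type~$\mu$, so $G$ is nice.

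The only nontrivial input, and the main obstacle, is the positivity $K_{\nu\mu}>0$ for every $\nu\unrhd\mu$. We cite it from \citet[Chap.~I]{Mac95B}. If a self-contained argument is wanted, we would induct via the Pieri rule of \cref{prop:Pieri}: construct a semistandard tableau of shape~$\nu$ and content~$\mu$ by successively placing horizontal strips of sizes $\mu_1,\mu_2,\dots$. The dominance inequalities guarantee that each strip fits, since after $k$ steps the inner shape still dominates in the required sense.
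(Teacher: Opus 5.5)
Your argument is correct. The paper gives no proof of its own and simply cites Stanley's Proposition~1.5, and your route is the standard proof of that result: monomial coefficients count stable partitions up to a positive factor, and Kostka triangularity together with $K_{\nu\mu}>0$ for $\nu\unrhd\mu$ finishes the argument. The only loose point is your optional Pieri sketch of $K_{\nu\mu}>0$, whose invariant is not made precise; a clean self-contained substitute is to apply $\omega$ to \cref{prop:s+.e-e}, which makes $h_\mu-h_\nu$ Schur positive whenever $\mu\unlhd\nu$ and hence gives $K_{\nu\mu}=[s_\nu]h_\mu\ge[s_\nu]h_\nu=1$.
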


We first show that $G$ fails this necessary condition when
$r\le s+t-2$.

\begin{lemma}\label{lem:nice-.000}
Let $r\ge s\ge t\ge2$. If $r\le s+t-2$, then the graph $S_{rst}^{000}$
is not nice.
\end{lemma}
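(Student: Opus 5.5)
The plan is to use the fact that the center~$c$ is adjacent to every other vertex of $G=S_{rst}^{000}$. So in every stable partition of~$G$ the set $\{c\}$ is a block of size~$1$, and every type $\lambda$ realized by a stable partition has a part equal to~$1$. It therefore suffices to exhibit a realizable type $\lambda$ and a partition $\mu\unlhd\lambda$ of $n=r+s+t-2$ with no part equal to~$1$.

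\textbf{Construction.} Write $A=K_r-c$, $B=K_s-c$, $C=K_t-c$, with sizes $r-1\ge s-1\ge t-1\ge1$. A subset of $A\cup B\cup C$ is stable exactly when it contains at most one vertex from each of $A$, $B$, $C$. Choose a stable partition in three steps.
\begin{enumerate}
\item Put $c$ alone in a block.
\item Put one triple, with one vertex from each of $A$, $B$, $C$, in a block; this is possible since $t\ge2$.
\item Cover the remaining vertices by blocks of size $2$ or $3$, each meeting $A$, $B$, $C$ at most once.
\end{enumerate}
The resulting type is $\lambda=(3,\nu,1)$ sorted, where $\nu$ has all parts in $\{2,3\}$. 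Let $\mu$ be obtained from $\lambda$ by replacing the parts $3$ and $1$ by $2,2$. Then $\mu\unlhd\lambda$, since moving a cell from a higher row to a lower one goes down in dominance. Moreover $\mu$ has no part~$1$, so $\mu$ is not the type of any stable partition, and $G$ is not nice.

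\textbf{Main step: Step~3.} After the triple is removed, the remaining counts are $a=r-2\ge b=s-2\ge d=t-2\ge0$. The hypothesis $r\le s+t-2$ is exactly $a\le b+d$. We need to cover a multiset of $a$ vertices of colour $A$, $b$ of colour $B$ and $d$ of colour $C$ by rainbow blocks of size $2$ or $3$.
\begin{itemize}
\item If $a+b+d=0$, then $r=s=t=2$; here $\lambda=(3,1)$ and $\mu=(2,2)$, and we are done.
\item If $a+b+d$ is even and $a\le b+d$, then a perfect rainbow matching into pairs exists. This follows by the standard argument: repeatedly pair a vertex of the currently largest class with one of the second largest. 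This preserves both the parity of the total and the condition that the largest class is at most the sum of the other two.
\item If $a+b+d$ is odd and $d=0$, then $a\le b\le a$ forces $a=b$, so the sum is even; hence this case does not occur.
\item If $a+b+d$ is odd, then $d\ge1$. Also $a\ne b+d$, since otherwise the sum $2a$ would be even, so $a\le b+d-1$. Remove one more rainbow triple. The new counts $a-1,b-1,d-1$ have even sum and still satisfy the largest-at-most-sum-of-others condition, because $a-1\le (b-1)+(d-1)$. Then apply the pairing case.
\end{itemize}

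The only delicate point is this parity and balance bookkeeping in Step~3. The rest of the argument is immediate from the definition of niceness.
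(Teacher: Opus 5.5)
Your proof is correct, but it puts the work in a different place from the paper. The paper takes the greedy stable partition of type $\lambda=(3^{t-1},2^{s-t},1^{r-s+1})$: all $t-1$ rainbow triples, then $s-t$ pairs, then the leftover vertices of $K_r-c$ together with the center as singletons. Its existence is immediate. The paper compares it with exactly the target you reach, namely $\mu=(2^{n/2})$ or $(3,2^{(n-3)/2})$ according to the parity of $n$. The hypothesis $r\le s+t-2$ is then consumed in the ``routine'' partial-sum check $\mu\unlhd\lambda$, where the tail of $r-s+1$ ones in $\lambda$ must not fall behind the parts $2$ of $\mu$.

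You instead choose $\lambda$ with a single singleton. Then $\mu\lhd\lambda$ is one transfer $(3,1)\to(2,2)$ with nothing to verify. The hypothesis reappears as the balance condition $a\le b+d$ for covering the remaining vertices by rainbow pairs and triples. Your parity bookkeeping is sound: in the odd case you correctly force $d\ge1$ and $a\le b+d-1$ before removing the extra triple, and the pairing procedure does preserve the ``largest class at most the sum of the others'' invariant.

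The paper's version is shorter on the page. Yours is fully self-contained and shows why $r\le s+t-2$ is the natural threshold: it is exactly what lets every non-center vertex be absorbed into blocks of size at least two once one triple has been reserved.

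One small wording point: in the paper's French convention the longest row is the lowest, so ``moving a cell from a higher row to a lower one'' reads backwards. What you mean is moving a cell from a larger part to a smaller one, which is \cref{lem:transfer-dominance}.
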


\begin{proof}
The graph $G$ has a stable partition of type
$\lambda=(3^{t-1},\,2^{s-t},\,1^{r-s+1})$,
and it is routine to verify that $\lambda\rhd\mu$ for
\[
\mu
=
\begin{dcases*}
(2^{n/2}),& if $n$ is even,\\
(3,\,2^{(n-3)/2}),& if $n$ is odd.
\end{dcases*}
\]
However, $G$ has no stable partition of type $\mu$: the block
containing the center is necessarily a singleton, whereas $\mu$ has
no part equal to $1$. Thus $G$ is not nice. 
\end{proof}

To derive the Schur expansion in the range $r\ge s+t-1$, we will
use two coefficient-extraction lemmas.  The first is a consequence
of the bialternant formula; see
\citet[Chp.~I, Eq.~(3.1), p.~40]{Mac95B}.

\begin{lemma}[Bialternant extraction]
\label{lem:bialternant-coefficient-extraction}
Let $d\ge1$.
For any symmetric polynomial $F(x_1,\dots,x_d)$ and any
partition $\lambda$ with $\ell(\lambda)\le d$,
\[
\brk[s]1{s_\lambda(x_1,\dots,x_d)}F
=
\brk[s]1{x^{\lambda+\delta_d}}
\Delta_dF(x_1,\dots,x_d),
\]
where
\[
\delta_d=(d-1,d-2,\dots,0)
\quad\text{and}\quad
\Delta_d
=
\prod_{1\le p<q\le d}(x_p-x_q).
\]
\end{lemma}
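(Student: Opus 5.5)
We prove the bialternant extraction from the classical bialternant formula. The formula says that for a partition $\mu$ with $\ell(\mu)\le d$,
\[
s_\mu(x_1,\dots,x_d)=\frac{a_{\mu+\delta_d}}{a_{\delta_d}},
\qquad
a_\beta=\det\brk1{x_p^{\beta_q}}_{p,q=1}^d,
\]
and that $a_{\delta_d}=\Delta_d$ (Vandermonde). We also use that the Schur polynomials $s_\mu(x_1,\dots,x_d)$ with $\ell(\mu)\le d$ form a basis of the symmetric polynomials in $d$ variables.

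First, expand $F$ in this basis:
\[
F=\sum_{\ell(\mu)\le d}c_\mu\, s_\mu(x_1,\dots,x_d),
\]
so that the left side of the claim is by definition $c_\lambda$. Multiplying by $\Delta_d$ gives
\[
\Delta_dF=\sum_{\mu}c_\mu\, a_{\mu+\delta_d}.
\]

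Second, expand each alternant as a sum over permutations:
\[
a_{\mu+\delta_d}=\sum_{\sigma\in\mathfrak S_d}\operatorname{sgn}(\sigma)\,x^{\sigma(\mu+\delta_d)},
\]
where $\sigma$ permutes the exponent vector. The vector $\mu+\delta_d$ is strictly decreasing, so all $d!$ monomials are distinct. Exactly one of them, the one with $\sigma=\mathrm{id}$, has a strictly decreasing exponent vector, and its coefficient is $+1$.

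Third, extract the coefficient of $x^{\lambda+\delta_d}$. Since $\lambda+\delta_d$ is strictly decreasing, the monomial $x^{\lambda+\delta_d}$ can occur in $a_{\mu+\delta_d}$ only through $\sigma=\mathrm{id}$, which requires $\mu+\delta_d=\lambda+\delta_d$, i.e.\ $\mu=\lambda$. In that case the coefficient is $1$. Hence
\[
\brk[s]1{x^{\lambda+\delta_d}}\Delta_dF=c_\lambda,
\]
which is the claim.

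There is no real obstacle. The only point needing care is the uniqueness of the strictly decreasing rearrangement, which is what makes the coefficient extraction exact. If $F$ is taken to be a symmetric function restricted to $d$ variables, then $c_\lambda$ agrees with $[s_\lambda]F$ whenever $\ell(\lambda)\le d$, because $s_\mu(x_1,\dots,x_d)=0$ for $\ell(\mu)>d$ and the remaining $s_\mu(x_1,\dots,x_d)$ stay linearly independent.
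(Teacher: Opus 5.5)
Your proposal is correct and follows essentially the same route as the paper: expand $F$ in the Schur basis, multiply by $\Delta_d = a_{\delta_d}$ to obtain alternants $a_{\mu+\delta_d}$, and observe that the strictly decreasing exponent vector $\lambda+\delta_d$ occurs, with coefficient $1$, only in the identity term of $a_{\lambda+\delta_d}$. Your extra remark that $[s_\lambda]F$ agrees with the $d$-variable Schur coefficient when $\ell(\lambda)\le d$ is a useful clarification that the paper leaves implicit.
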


\begin{proof}
By the bialternant formula,
\[
\Delta_ds_\mu(x_1,\dots,x_d)
=
\det\bigl(x_i^{\mu_j+d-j}\bigr)_{1\le i,j\le d}.
\]
The exponent vector $\lambda+\delta_d$ is strictly decreasing, so
the monomial $x^{\lambda+\delta_d}$ occurs in the determinant on the
right with coefficient $1$ when $\mu=\lambda$, and does not occur
when $\mu\ne\lambda$. Expanding $F$ in the Schur basis proves the
formula.
\end{proof}

The second lemma extracts a two-row Schur coefficient by taking the
difference of adjacent monomial coefficients.  Its proof begins with
the three-to-two-variable reduction that will also be used below.

\begin{lemma}\label{lem:s-coeff.uv}
Let
\[
g(u,v)=\sum_{k=0}^{N}c_k u^{N-k}v^k
\]
be a symmetric homogeneous polynomial of degree $N$.
Then, for $0\le k\le\floor*{N/2}$,
\[
[s_{(N-k,\,k)}(u,v)]g(u,v)=c_k-c_{k-1},
\quad\text{where $c_{-1}=0$.}
\]
\end{lemma}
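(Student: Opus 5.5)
The plan is to apply the bialternant extraction of \cref{lem:bialternant-coefficient-extraction} with $d=2$. Here $\delta_2=(1,0)$ and $\Delta_2=u-v$. For a partition $\lambda=(N-k,k)$ with $0\le k\le\floor*{N/2}$, this gives
\[
[s_{(N-k,\,k)}(u,v)]\,g(u,v)=[u^{N-k+1}v^{k}]\,(u-v)\,g(u,v).
\]
Since $g$ is symmetric, it is a polynomial in $u$ and $v$ and expands in the Schur polynomials $s_\mu(u,v)$ with $\ell(\mu)\le 2$. Because $g$ is homogeneous of degree $N$, only $\mu\vdash N$ occur. So the hypotheses of the lemma are met, and the coefficient is well defined.

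Next I expand the product:
\[
(u-v)g=\sum_{j=0}^{N}c_j u^{N-j+1}v^j-\sum_{j=0}^{N}c_j u^{N-j}v^{j+1}.
\]
The monomial $u^{N-k+1}v^k$ arises from the first sum at $j=k$, with coefficient $c_k$. It arises from the second sum at $j=k-1$, with coefficient $-c_{k-1}$, provided $k\ge1$. When $k=0$ there is no second contribution, which matches the convention $c_{-1}=0$. This yields $c_k-c_{k-1}$.

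The only point needing care is the range of $k$. The condition $k\le\floor*{N/2}$ guarantees that $(N-k,k)$ is a genuine partition. This is what lets us apply \cref{lem:bialternant-coefficient-extraction}, which requires $\lambda$ to be a partition so that $\lambda+\delta_2$ is strictly decreasing. There is no real obstacle beyond this bookkeeping.

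As a sanity check, one can verify the formula directly. Write $s_{(N-k,k)}(u,v)=\sum_{j=k}^{N-k}u^{N-j}v^j$, and note that symmetry gives $c_k=c_{N-k}$. Then telescoping the coefficients $c_k-c_{k-1}$ recovers $g$.
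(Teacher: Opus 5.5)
Your proof is correct, but it takes a different route from the paper's. You apply \cref{lem:bialternant-coefficient-extraction} with $d=2$ and read off the coefficient of $u^{N-k+1}v^k$ in $(u-v)g$, which gives $c_k-c_{k-1}$ in one line.

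The paper never multiplies $g$ by $u-v$ for the stated formula. It writes $g=\sum_j b_j s_{(N-j,\,j)}(u,v)$ and uses the monomial expansion $s_{(N-j,\,j)}(u,v)=\sum_{\ell=j}^{N-j}u^{N-\ell}v^\ell$. This yields the triangular relations $c_k=b_0+\dots+b_k$ for $0\le k\le\floor*{N/2}$, and taking differences finishes the proof. That computation is essentially your closing sanity check.

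Your version is shorter and reuses a lemma the paper has already proved. The paper's version needs only the explicit two-variable expansion of two-row Schur polynomials, and it displays the partial-sum structure $c_k=\sum_{j\le k}b_j$ directly.

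The paper does use the $d=2$ bialternant extraction in this same proof, but only for an auxiliary three-to-two-variable reduction, $[s_{(a,\,b,\,c)}]f=[s_{(b,\,c)}(u,v)](1-u)(1-v)f(1,u,v)$, which it records there for later use. That claim is not part of the statement, so you need not supply it.
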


\begin{proof}
We first claim that, if $f$ is a homogeneous symmetric function of
degree $M$ and $(a,b,c)$ is a partition of $M$, allowing $c=0$, then
\[
[s_{(a,\,b,\,c)}]f
=
[s_{(b,\,c)}(u,v)](1-u)(1-v)f(1,u,v).
\]
In fact, set $f_3=f(x_1,x_2,x_3,0,0,\dots)$.
Applying \cref{lem:bialternant-coefficient-extraction} with $d=3$,
\[
[s_{(a,\,b,\,c)}]f
=
[x_1^{a+2}x_2^{b+1}x_3^c]\Delta_3f_3.
\]
Since $\Delta_3f_3$ is homogeneous of degree $M+3$ and
$M=a+b+c$, setting $(x_1,x_2,x_3)=(1,u,v)$ on the right side
gives
\[
[s_{(a,\,b,\,c)}]f
=
[u^{b+1}v^c]\Delta_3(1,u,v)f(1,u,v)
=[u^{b+1}v^c](1-u)(1-v)(u-v)f(1,u,v).
\]
Applying \cref{lem:bialternant-coefficient-extraction} with $d=2$ to
a symmetric function $h(u,v)$, we obtain
\[
[s_{(b,\,c)}(u,v)]h(u,v)
=
[u^{b+1}v^c](u-v)h(u,v).
\]
Taking $h(u,v)=(1-u)(1-v)f(1,u,v)$ proves the claim.

We now prove the stated formula.  Write the Schur expansion of
$g(u,v)$ as
\[
g(u,v)
=
\sum_{j=0}^{\floor*{N/2}}
b_j s_{(N-j,\,j)}(u,v).
\]
Since
\[
s_{(N-j,\,j)}(u,v)
=
\sum_{\ell=j}^{N-j}u^{N-\ell}v^\ell,
\]
we have $c_k=b_0+\dots+b_k$ for
$0\le k\le\floor*{N/2}$. Therefore
$b_k=c_k-c_{k-1}$, as desired.
\end{proof}

We now apply the preceding coefficient-extraction lemmas to derive
the Schur expansion of $S_{rst}^{000}$.

\begin{theorem}\label{thm:Y.000.s}
Let $r\ge s\ge t\ge 2$ and $G=S_{rst}^{000}$.
If $r\ge s+t-1$, then
\[
Y_G
=
\sum_{x,y\ge0,\ 2x+y\le s+t-2}
C_{x,y}
s_{(3^x,\,2^y,\,1^{n-3x-2y})},
\]
where $n=r+s+t-2$, and
\[
C_{x,y}
=
\sum_{i=\max\{x,\,2x+y-t+1\}}^{\min\{s-1,\,x+y\}}
(t-2x-y+i)
\brk1{(r-i)(s-i)-(i-x+1)(x+y-i)}.
\]
\end{theorem}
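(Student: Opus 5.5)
We first pin down which Schur functions can appear, and then compute their coefficients with the three-variable reduction from the proof of \cref{lem:s-coeff.uv}.

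\textbf{Step 1: support.} We apply \cref{lem:alpha-omega.s} with $\alpha_k$ and $\omega_k$ computed for $G$.
\begin{itemize}
\item A stable set in $G$ meets each clique in at most one vertex. So $\alpha(G)=3$, attained by one non-center vertex from each clique. Hence $\lambda_1\le 3$ whenever $[s_\lambda]Y_G\ne0$.
\item Three disjoint cliques can cover all $n$ vertices: take $K_r$ together with $K_s-c$ and $K_t-c$. Hence $\lambda'_1+\lambda'_2+\lambda'_3\ge n$, which is automatic once $\lambda_1\le3$.
\item Two disjoint cliques cover at most $r+s-1$ vertices. Two disjoint stable sets cover at most $2(s+t-2)$ vertices. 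Indeed, each stable set avoiding $c$ uses at most one vertex of each of $K_s-c$ and $K_t-c$, and there are $s+t-2$ such vertices to use. This gives $\lambda_1+\lambda_2\le\alpha_2$.
\end{itemize}
With $\lambda=(3^x,2^y,1^{z})$ we have $\lambda'_2+\lambda'_3=2x+y$, which counts the cells outside the first column. These cells must come from non-center vertices of $K_s$ and $K_t$, so we expect $2x+y\le s+t-2$. A cleaner way to get the same support restriction drops out of Step 2: a nonzero coefficient forces $C_{x,y}$'s summation range to be nonempty. So the support can be read off from the computation itself, and we then only need $\lambda_1\le3$.

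\textbf{Step 2: coefficients.} By the claim inside \cref{lem:s-coeff.uv}, for a partition with at most three columns we can use $\omega$ to turn columns into rows. Namely,
\[
[s_{(3^x,2^y,1^z)}]Y_G=[s_{(x+y+z,\,x+y,\,x)}]\,\omega Y_G=[s_{(x+y,\,x)}(u,v)]\,(1-u)(1-v)\,(\omega Y_G)(1,u,v).
\]
We then use \cref{thm:omegaY.000}. Under the specialisation, $p_m(1,u,v)=1+u^m+v^m$ and $h_i(1,u,v)=\sum_{a+b\le i}u^av^b$. Moreover $(1-u)(1-v)\,h_i(1,u,v)$ telescopes to a short expression.

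Alternatively, and more simply, we avoid $\omega$ by working with $Y_G$ itself. The proof of \cref{thm:omegaY.000} gives
\[
Y_G=\sum_q x_q\,e^{(q)}_{r-1}e^{(q)}_{s-1}e^{(q)}_{t-1}.
\]
We extract the coefficient of $s_{\lambda'}$ in $\omega Y_G$ as a three-variable computation. By \cref{lem:s-coeff.uv}, this reduces to differences $c_k-c_{k-1}$ of monomial coefficients in a bivariate polynomial of degree $2x+y$, namely
\[
g(u,v)=[\text{degree }2x+y\text{ part}]\;(1-u)(1-v)\sum_{i,j,k}(1+u^{m}+v^{m})\,h_i h_j h_k(1,u,v),\qquad m=n-i-j-k.
\]

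\textbf{Step 3: evaluating the sum.} Because $r\ge s+t-1$, the index $i$ coming from the large clique $K_r$ does not truncate in the relevant degree range. Summing over it therefore collapses geometric sums, which is where the factor $(r-i)$ comes from. The remaining double sum over $(j,k)$ reduces to a single sum over one index, called $i$ in the statement. Its range $\max\{x,2x+y-t+1\}\le i\le\min\{s-1,x+y\}$ encodes the constraints $k\le t-1$ and $j\le s-1$. The weights are as follows:
\begin{itemize}
\item $(t-2x-y+i)$ counts the admissible values of the $K_t$ index;
\item $(r-i)(s-i)$ comes from the $k$-th coefficient;
\item $(i-x+1)(x+y-i)$ comes from the $(k-1)$-th coefficient, as in \cref{lem:s-coeff.uv}.
\end{itemize}
The main obstacle is this bookkeeping: tracking how the truncations $i\le r-1$, $j\le s-1$, $k\le t-1$ interact with the power-sum term $1+u^m+v^m$. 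The hypothesis $r\ge s+t-1$ is what guarantees that the $u^m$ and $v^m$ contributions either vanish in the extracted degrees or combine uniformly. I would first verify the formula on small cases such as $(r,s,t)=(3,2,2)$ and $(4,3,2)$ to fix the indexing, and only then carry out the general summation.
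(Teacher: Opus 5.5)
The support step has a genuine gap, and Step~3 depends on it. From \cref{lem:alpha-omega.s} you extract only $\lambda_1\le 3$. The bound $2x+y\le s+t-2$ is merely ``expected'' from an informal cell count. Reading it off from the emptiness of the summation range of $C_{x,y}$ is circular. Step~3 rests on the $K_r$-index not truncating ``in the relevant degree range'', i.e.\ on the extracted degree $2x+y$ in $(u,v)$ being at most $r-1$, and $\lambda_1\le3$ does not give this. Already for $(r,s,t)=(3,2,2)$ the partition $\lambda=(3,2)$ has $2x+y=3>r-1$. In such degrees the untruncated computation is not justified. So the formula for $C_{x,y}$, and with it the vanishing of these coefficients, has not been established there.

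The missing inequality is in the lemma you cite. You used its clique inequality only for $k=3$, where it is vacuous. For $k=1$ it reads $\ell(\lambda)=\lambda'_1\ge\omega(G)=r$ (likewise $\ell(\lambda)\ge\chi(G)=r$). Hence $x+y+z\ge r$ and $2x+y=n-\ell(\lambda)\le n-r=s+t-2\le r-1$. This one bound gives both the support in the statement and the non-truncation you need, and it is precisely where $r\ge s+t-1$ enters. Your $\alpha_2$ remark is beside the point, and its argument overlooks stable sets meeting $K_r-c$.

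Second, Step~3 describes the target formula rather than deriving it. Your attributions of $(r-i)(s-i)$ and $(i-x+1)(x+y-i)$ to ``the $k$-th and $(k-1)$-th coefficients'' match no actual computation. The $u^m$, $v^m$ parts of $p_m(1,u,v)$ do not vanish in the extracted degrees: already in degree one they supply the $+1$ in $C_{0,1}=2rst-rs-rt-st+1$.

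What makes the computation tractable is collapsing the triple sum against the power sum. Since $n-i-j-k=1+(r-1-i)+(s-1-j)+(t-1-k)$, summing $w^{n-i-j-k}h_ih_jh_k(w,u,v)$ over $i\le r-1$, $j\le s-1$, $k\le t-1$ gives $w\,h_{(r-1,\,s-1,\,t-1)}(w,w,u,v)$. Hence $\omega Y_G(1,u,v)=\mathcal P_{r-1}\mathcal P_{s-1}\mathcal P_{t-1}+F(u\,\mathcal Q_{r-1}\mathcal Q_{s-1}\mathcal Q_{t-1})$, where $\mathcal P_q=h_q(1,1,u,v)$, $\mathcal Q_q=h_q(1,u,u,v)$ and $F(f(u,v))=f(u,v)+f(v,u)$.

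In degrees at most $r-1$ you may then replace $\mathcal P_{r-1}$ and $\mathcal Q_{r-1}$ by their untruncated series. This cancels $(1-u)(1-v)$ and leaves a polynomial whose two-variable Schur expansion still has to be computed. That requires $F(uU_i)=(i+1)s_{i+1}(u,v)$ for $U_i=\sum_{a=0}^{i}(a+1)u^av^{i-a}$, an explicit expansion of $F(uU_iU_j)$, and the Pieri rule. Only after reindexing with $k=x$ and $j=2x+y-i$ does $C_{x,y}$ appear. Numerical checks on $(3,2,2)$ and $(4,3,2)$ cannot substitute for this.
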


\begin{proof}
Assume that $r\ge s+t-1$.
Suppose that $[s_\lambda]X_G\ne 0$.
By \cref{lem:alpha-omega.s}, 
\[
\lambda_1\le \alpha(G)=3
\quad\text{and}\quad
\ell(\lambda)\ge \chi(G)=r.
\]
Thus we can suppose that 
\[
\lambda=(3^x,\,2^y,\,1^z),\quad
\text{where $x,y,z\ge 0$ and $x+y+z\ge r$}.
\]
Then $n=\abs{V(G)}=\abs{\lambda}=3x+2y+z$
and $\lambda'=(x+y+z,\,x+y,\,x)$.

Using Hall's involution $\omega$ defined by \cref{def:omega},
we next compute the coefficient $[s_\lambda]Y_G$ through
\begin{equation}\label{pf:s.Y=s'.oY}
[s_\lambda]Y_G
=[s_{\lambda'}]\omega Y_G.
\end{equation}
We specialize $\omega Y_G$ to three variables $w,u,v$; this does not change the coefficient. Let 
\[
r'=r-1,\quad
s'=s-1,\quad\text{and}\quad
t'=t-1.
\]
By \cref{thm:omegaY.000}, 
\[
\omega Y_G(w,u,v)
=
\sum_{i\le r'}
\sum_{j\le s'}
\sum_{k\le t'}
(w^{n-i-j-k}+u^{n-i-j-k}+v^{n-i-j-k})
h_{(i,j,k)}(w,u,v),
\]
Since $n=r'+s'+t'+1$, the contribution to $\omega Y_G(w,u,v)$ from the term
$w^{n-i-j-k}$ is
\begin{align*}
&
\sum_{i\le r'}
\sum_{j\le s'}
\sum_{k\le t'}
w^{n-i-j-k}
h_i(w,u,v)h_j(w,u,v)h_k(w,u,v)
\\
=\ 
&
w
\sum_{i\le r'}w^{r'-i}h_i(w,u,v)
\sum_{j\le s'}w^{s'-j}h_j(w,u,v)
\sum_{k\le t'}w^{t'-k}h_k(w,u,v)
\\
=\ 
& 
w h_{(r',\,s',\,t')}(w,w,u,v).
\end{align*}
Applying the same calculation to the other two terms
$u^{n-i-j-k}$ and $v^{n-i-j-k}$, we obtain
\[
\omega Y_G(w,u,v)
=
w h_{(r',\,s',\,t')}(w,w,u,v)
+
u h_{(r',\,s',\,t')}(w,u,u,v)
+
v h_{(r',\,s',\,t')}(w,u,v,v).
\]
Applying the claim in the proof of \cref{lem:s-coeff.uv} to the
right side of \cref{pf:s.Y=s'.oY}, we obtain
\begin{equation}\label{pf:s.Y}
[s_{\lambda}]Y_G
=
[s_{(x+y,\,x)}(u,v)]
(1-u)(1-v)\omega Y_G(1,u,v),
\end{equation}
where
\begin{align*}
\omega Y_G(1,u,v)
=\mathcal P_{r'}
\mathcal P_{s'}
\mathcal P_{t'}
+
F(u
\mathcal Q_{r'}
\mathcal Q_{s'}
\mathcal Q_{t'});
\end{align*}
here $\mathcal P_q=h_q(1,1,u,v)$,
$\mathcal Q_q=h_q(1,u,u,v)$, and
\[
F(f(u,v))=f(u,v)+f(v,u)
\quad\text{for any polynomial $f(u,v)$.}
\]

Expanding 
the polynomials $\mathcal P_q$ and $\mathcal Q_q$
under the monomial basis, we obtain
\begin{align}
\label{pf:Pq.m}
\mathcal P_q&=\sum_{i,j\ge0,\ i+j\le q}(q-i-j+1)u^iv^j,
\quad\text{and}\\
\notag
\mathcal Q_q&=\sum_{i,j\ge0,\ i+j\le q}(i+1)u^iv^j.
\end{align}
We next truncate only at degrees that cannot affect the Schur
coefficient in \cref{pf:s.Y}.  The target Schur polynomial has degree
\[
2x+y
=n-(x+y+z)
\le n-r
=s+t-2
\le r'.
\]
Therefore, extending the sums for $\mathcal P_{r'}$ and
$\mathcal Q_{r'}$ beyond the restriction $i+j\le r'$ does not change
the required Schur coefficient. 
As will be seen, this extension also cancels the
factor $(1-u)(1-v)$ in \cref{pf:s.Y}.  Explicitly,
\begin{align*}
\mathcal Q_{r'}
&\equiv
\sum_{i,j\ge 0}(i+1)u^i v^j
=\frac{1}{(1-u)^2(1-v)},
\quad\text{and}\\
\mathcal P_{r'}
&\equiv
\sum_{i,j\ge0}(r'-i-j+1)u^iv^j
=\sum_{i,j\ge0}(r+2)u^iv^j
-F\brk4{\,\sum_{i,j\ge0}(i+1)u^iv^j}
\\
&=\frac{r+2}{(1-u)(1-v)}
-F\brk3{\frac{1}{(1-u)^2(1-v)}},
\end{align*}
where the congruences are modulo terms of degree larger than $r'$.
Substituting them into \cref{pf:s.Y} gives 
\begin{equation}\label{pf:s.Y:s.R}
[s_\lambda]Y_G
=
[s_{(x+y,\,x)}(u,v)]\mathcal R,
\end{equation}
where 
\[
\mathcal R
=
\brk3{
r+2-F\brk2{\frac{1}{1-u}}
}
\mathcal P_{s'}\mathcal P_{t'}
+
F\brk3{\frac{u\mathcal Q_{s'}\mathcal Q_{t'}}{1-u}}.
\]

We claim that $\mathcal R$ is a polynomial.
In fact, 
for $i\ge 0$ and $q\ge 0$, let
\[
U_i=\sum_{a=0}^i(a+1)u^av^{i-a}
\quad\text{and}\quad
\mathcal E_q
=
\frac{\mathcal P_q-\mathcal Q_q}{1-u}.
\]
Then it is routine to verify that
\begin{align}
\label{pf:Qq.U}
\mathcal Q_q
&=\sum_{i=0}^qU_i,
\quad\text{and}\\
\label{pf:Eq.U}
\mathcal E_q
&=\sum_{i=0}^{q-1}(q-i)U_i.
\end{align}
As a result, the function $\mathcal E_q$ is a polynomial.
Then we can verify that
\begin{equation}\label{pf:coeff.S000.complement}
\mathcal R
=
\brk1{r\mathcal P_{s'}-F(u\mathcal E_{s'})}
\mathcal P_{t'}
-
F(u\mathcal Q_{s'}\mathcal E_{t'}),
\end{equation}
which is therefore a polynomial too. This proves the claim.

We now compute the right side of \cref{pf:s.Y:s.R}.  
In view of \cref{pf:Qq.U,pf:Eq.U,pf:coeff.S000.complement},
we first calculate
\begin{equation}\label{pf:coeff.S000.U1}
F(uU_i)=(i+1)s_{i+1}(u,v),
\end{equation}
which can be checked directly.
Then we claim that
\begin{equation}\label{pf:coeff.S000.U2}
F(uU_iU_j)
=
\sum_{k=0}^{\min(i,j)}
(i-k+1)(j-k+1)s_{(i+j+1-k,\,k)}(u,v).
\end{equation}
In fact, 
since $F(uU_iU_j)$ is homogeneous of degree $i+j+1$,
we can write
\[
F(uU_iU_j)
=\sum_{k=0}^{i+j+1}
c_k
u^{i+j+1-k}v^k.
\]
In order to prove \cref{pf:coeff.S000.U2},
we can suppose that $i\le j$ without loss of generality.  
Then
\[
c_k
=
\sum_{p=0}^{\min(k,i)}
(i-p+1)(j-k+p+1)
+
\sum_{p=0}^{\min(k-1,\,i)}
(p+1)(k-p).
\]
By \cref{lem:s-coeff.uv},
the coefficient of $s_{(i+j+1-k,\,k)}(u,v)$ is $c_0$ for $k=0$
and is $c_k-c_{k-1}$ for $k\ge 1$.
Direct subtraction in the formula above gives
\[
c_0=(i+1)(j+1)
\quad\text{and}\quad
c_k-c_{k-1}
=\begin{dcases*}
(i-k+1)(j-k+1), & if $1\le k\le i$,\\
0, & if $i<k\le\floor*{(i+j+1)/2}$.
\end{dcases*}
\]
Thus \cref{pf:coeff.S000.U2} follows and the claim is proved.

By \cref{pf:coeff.S000.U1,pf:Eq.U}, 
\[
F(u\mathcal E_{s-1})
=
\sum_{j=0}^{s-2}(s-1-j)F(uU_j)
=
\sum_{j=0}^{s-2}(s-1-j)(j+1)s_{j+1}(u,v)
=
\sum_{i\le s'}i(s-i)s_i(u,v).
\]
On the other hand, grouping the monomials in \cref{pf:Pq.m} gives 
the Schur expansion   
\begin{equation}\label{pf:coeff.S000.Pq}
\mathcal P_q
=
\sum_{k=0}^q(q+1-k)s_k(u,v).
\end{equation}
It follows that 
\[
r\mathcal P_{s'}-F(u\mathcal E_{s'})
=
r\sum_{k\le s'}(s-k)s_k(u,v)
-\sum_{i\le s'}i(s-i)s_i(u,v)
=
\sum_{i\le s'}
(r-i)(s-i)
s_i(u,v).
\]
By the horizontal identity in \cref{prop:Pieri},
\[
s_{i}s_{j}
=
\sum_{k=0}^{\min(i,j)}s_{(i+j-k,\,k)}.
\]
We can then deduce by using \cref{pf:coeff.S000.Pq} that
\begin{align*}
&\brk1{r\mathcal P_{s'}-F(u\mathcal E_{s'})}
\mathcal P_{t'}
=
\sum_{i=0}^{s'}
\sum_{j=0}^{t'}
\sum_{k=0}^{\min(i,j)}
(r-i)(s-i)(t-j)
s_{(i+j-k,\,k)}(u,v)
\\
={}&
\sum_{i\le s'}
(r-i)(s-i)ts_i(u,v)
+
\sum_{i=0}^{s-1}
\sum_{j=0}^{t-2}
\sum_{k=0}^{\min(i,\,j+1)}
(r-i)(s-i)(t-1-j)
s_{(i+j+1-k,\,k)}(u,v)
\\
={}&
\sum_{i=0}^{s-1}
\sum_{j=0}^{\min(i,\,t-1)}
\sum_{k=0}^{j}
(r-i)(s-i)(t-j)
s_{(i+j-k,\,k)}(u,v)
\\
{}&+
\sum_{i=0}^{s-1}
\sum_{j=i}^{t-2}
\sum_{k=0}^{i}
(r-i)(s-i)(t-1-j)
s_{(i+j+1-k,\,k)}(u,v).
\end{align*}
On the other hand, by \cref{pf:Qq.U,pf:Eq.U,pf:coeff.S000.U2}, 
\begin{align*}
F(u\mathcal Q_{s'}\mathcal E_{t'})
&=F\brk3{u
\sum_{i=0}^{s'}
U_i
\sum_{j=0}^{t-2}
(t-1-j)U_j
}
=
\sum_{i=0}^{s-1}
\sum_{j=0}^{t-2}
(t-1-j)
F(uU_iU_j)
\\
&=
\sum_{i=0}^{s-1}
\sum_{j=0}^{t-2}
\sum_{k=0}^{\min(i,j)}
(t-1-j)(i-k+1)(j-k+1)
s_{(i+j+1-k,\,k)}(u,v)
\\
&=
\sum_{i=0}^{s-1}
\sum_{j=0}^{\min(i,\,t-1)}
\sum_{k=0}^{j}
(t-j)(i-k+1)(j-k)
s_{(i+j-k,\,k)}(u,v)
\\
&\quad
+
\sum_{i=0}^{s-1}
\sum_{j=i}^{t-2}
\sum_{k=0}^{i}
(t-1-j)(i-k+1)(j-k+1)
s_{(i+j+1-k,\,k)}(u,v).
\end{align*}
By \cref{pf:coeff.S000.complement}, 
\begin{align*}
\mathcal R
&=
\sum_{i=0}^{s-1}
\brk4{
\sum_{j=0}^{\min(i,\,t-1)}
\sum_{k=0}^{j}
(t-j)
W(i,j,k)
s_{(i+j-k,\,k)}(u,v)
+
\sum_{j=i+1}^{t-1}
\sum_{k=0}^{i}
(t-j)
W(i,j,k)
s_{(i+j-k,\,k)}(u,v)
},
\end{align*}
where 
\[
W(i,j,k)
=
(r-i)(s-i)
-(i-k+1)(j-k).
\]
The two sums together range over
\[
0\le i\le s-1,
\qquad
0\le j\le t-1,
\qquad
0\le k\le\min(i,\,j).
\]
We now collect equal Schur functions in this expression. By
\cref{pf:s.Y:s.R}, the term indexed by $(i,j,k)$ corresponds to
\[
x=k,
\qquad
y=i+j-2k,
\qquad
z=n-2i-2j+k,
\]
and hence to the Schur function
$s_{(3^x,\,2^y,\,1^z)}$. For fixed $x$ and $y$, we have
$k=x$ and $j=2x+y-i$, while the preceding bounds become
\[
\max\{x,\,2x+y-t+1\}
\le i\le
\min\{s-1,\,x+y\}.
\]
Moreover, the condition $\ell(3^x,2^y,1^z)\ge r$ is equivalent to
$2x+y\le s+t-2$. Substituting $k=x$ and $j=2x+y-i$ into the coefficient
$(t-j)W(i,j,k)$ gives exactly the desired expression of $C_{x,y}$, and completes the proof.
\end{proof}

Now we are in a position to determine the Schur positivity characterization.

\begin{theorem}\label{thm:s+.000}
Let $r\ge s\ge t\ge1$. The graph $S^{000}_{rst}$ is Schur positive if and only if
$r\ge st-1$.
\end{theorem}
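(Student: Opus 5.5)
For $t=1$ there is nothing to prove beyond what we already know: $S^{000}_{rs1}$ is the barbell $P^0(K_r,K_s)$, which is $e$-positive by \cref{thm:barbell} and hence Schur positive. The inequality $r\ge s\cdot1-1$ also holds automatically. So I will assume $r\ge s\ge t\ge2$ and split the argument according to whether $r\le s+t-2$ or $r\ge s+t-1$.

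\emph{Case $r\le s+t-2$.} Here \cref{lem:nice-.000} shows that $G$ is not nice, so by \cref{prop:s+.nice} it is not Schur positive. This agrees with the claimed criterion, because
\[
st-1-(s+t-2)=(s-1)(t-1)\ge1 .
\]
Hence $r\le s+t-2$ forces $r<st-1$.

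\emph{Case $r\ge s+t-1$.} In this range \cref{thm:Y.000.s} gives the full Schur expansion, so it suffices to decide when every $C_{x,y}$ is nonnegative.

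For necessity I look at the extreme coefficient with $x=0$ and $y=s+t-2$. Then the summation range for $i$ collapses to the single value $i=s-1$, since
\[
\max\{0,\,s-1\}=s-1=\min\{s-1,\,s+t-2\}.
\]
The factor $t-2x-y+i$ equals $1$. The bracket equals
\[
(r-s+1)\cdot1-s(t-1)=r+1-st .
\]
Therefore
\[
[s_{(2^{s+t-2},\,1^{r-s-t+2})}]Y_G=r+1-st,
\]
which is negative exactly when $r<st-1$.

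For sufficiency, assume $r\ge st-1$ and show that every summand of $C_{x,y}$ is nonnegative. Put $j=2x+y-i$. The lower bound $i\ge 2x+y-t+1$ means $j\le t-1$, so the first factor $t-j$ is at least $1$. For the bracket, use $i\le s-1$ to get $s-i\ge1$ and
\[
r-i\ \ge\ r-s+1\ \ge\ s(t-1).
\]
On the subtracted side, $i-x+1\le i+1\le s$, and
\[
x+y-i=j-x\le t-1 .
\]
Hence
\[
(i-x+1)(x+y-i)\le s(t-1)\le (r-i)(s-i).
\]
So every term is nonnegative, and $G$ is Schur positive.

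The only real work is locating the witness coefficient for necessity. The natural first candidate, $x=t-1$ and $y=s-t$, gives the positive value $r-s+1$. The choice $x=0$, $y=s+t-2$ is the one in which the subtracted product $(i-x+1)(x+y-i)=s(t-1)$ is as large as possible. The sufficiency direction is then a crude termwise bound.
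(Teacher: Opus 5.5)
Your proof is correct and follows the paper's argument. You use the same case split: the barbell for $t=1$, non-niceness via \cref{prop:s+.nice} for $r\le s+t-2$, and otherwise \cref{thm:Y.000.s} with the witness coefficient $C_{0,\,s+t-2}=r-st+1$. The only difference is in sufficiency: you bound each factor directly, $(i-x+1)(x+y-i)\le s(t-1)\le r-s+1\le (r-i)(s-i)$, whereas the paper uses monotonicity of $W(i,j,k)$ to reduce to the corner value $W(s-1,t-1,0)=r-st+1$.
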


\begin{proof}
Let $G=S^{000}_{rst}$. If $t=1$, then $G\cong P^0(K_s,K_r)$,
which is $e$-positive by \cref{thm:barbell}, and hence is Schur positive.
Moreover, $r\ge s>s-1=st-1$.
Thus the assertion holds when $t=1$.

Suppose henceforth that $t\ge2$.
If $r\le s+t-2$, then $r\le st-2$, and $G$ is not Schur positive by
\cref{prop:s+.nice,lem:nice-.000}.
If $s+t-1\le r\le st-2$, then
\cref{thm:Y.000.s} gives
\[
C_{0,\,s+t-2}=r-st+1<0.
\]
Anyway, $G$ is not Schur positive.

It remains to suppose that $r\ge st-1$. Then $r\ge s+t-1$.
Set
\[
j=2x+y-i
\quad\text{and}\quad
W(i,j,x)
=(r-i)(s-i)-(i-x+1)(j-x).
\]
In view of \cref{thm:Y.000.s}, since 
the factor $t-j$ is positive, 
it suffices to show that $W(i,j,k)\ge 0$, where
\[
0\le i\le s-1,\quad
0\le j\le t-1,\quad\text{and}\quad
0\le k\le\min(i,\,j).
\]
In fact, one may compute directly that
\begin{align*}
\frac{\partial W(i,j,k)}{\partial i}
&=2i-r-s-j+k
\le s-r-2<0,\\
\frac{\partial W(i,j,k)}{\partial j}
&=k-i-1<0,\quad\text{and}\\
\frac{\partial W(i,j,k)}{\partial k}
&=i+j-2k+1>0.
\end{align*}
Therefore,
\[
W(i,j,k)
\ge W(s-1,\,t-1,\,0)
=r-st+1.
\]
Hence $G$ is Schur positive, completing the proof.
\end{proof}

For example, the claw $S_{222}^{000}\cong K_{1,3}$ is not Schur positive, since $r\le st-2$ as $2\le 2\cdot2-2$; while the \emph{cricket graph} $S_{322}^{000}$ is Schur positive, since $r\ge st-1$ as $3=2\cdot2-1$, but it is not $e$-positive since $t=2$. Indeed,
\[
X_{S_{322}^{000}}=10e_5+14e_{(4,1)}-4e_{(3,2)}+4e_{(3,1,1)}
=4s_{(3,1,1)}+18s_{(2,1^3)}+24s_{(1^5)}.
\]

\section{Equivalent dominance-matching criteria for Schur positivity}
\label{sec:s+.criteria}

We establish three equivalent sufficient criteria for proving the Schur positivity of a symmetric function from a fixed signed $e_I$-expansion. We call them the matrix, Hall, and order-ideal criteria, respectively. We first recall a standard description of dominance covers due to \citet{Mui1902}.

\begin{lemma}[\citeauthor{Mui1902}]
\label{lem:transfer-dominance}
If $\mu\lhd\lambda$ are partitions of the same integer, then there is
a sequence
\[
\mu=\eta^{(0)}\lhd\eta^{(1)}\lhd\cdots
\lhd\eta^{(N)}=\lambda
\]
in which each $\eta^{(t+1)}$ is obtained from $\eta^{(t)}$ by
replacing two parts $(a,b)$, where $a\ge b\ge1$, with
$(a+1,\,b-1)$, then deleting a zero part if necessary and reordering.
\end{lemma}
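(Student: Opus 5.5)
The plan is to argue by strong induction on a potential function that strictly increases along dominance, and to obtain the chain by repeatedly applying a single elementary transfer. For a partition $\eta\vdash n$, set
\[
\Phi(\eta)=\sum_{k\ge1}(\eta_1+\cdots+\eta_k)\qquad\text{(summing up to }k=n\text{, say)}.
\]
This is a finite sum of partial sums. If $\mu\lhd\lambda$, then every partial sum of $\lambda$ is at least the corresponding one of $\mu$, and at least one is strictly larger, so $\Phi(\mu)<\Phi(\lambda)$. Since $\Phi$ takes only finitely many values on partitions of $n$, it suffices to prove one claim. Whenever $\mu\lhd\lambda$, there is a partition $\eta$, obtained from $\mu$ by a single move $(a,b)\mapsto(a+1,b-1)$ with $a\ge b\ge1$ (deleting a zero and reordering), such that $\mu\lhd\eta\unlhd\lambda$. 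Iterating this claim from $\mu$ produces a strictly $\Phi$-increasing chain that stays below $\lambda$, and it can only stop at $\lambda$ itself.

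To construct the move, I compare $\mu$ and $\lambda$ through their partial sums $M_k=\mu_1+\cdots+\mu_k$ and $L_k=\lambda_1+\cdots+\lambda_k$. Let $p$ be the smallest index with $M_p<L_p$, so that $\mu_i=\lambda_i$ for $i<p$ and $\mu_p<\lambda_p$. Since the totals agree, there is a smallest $q>p$ with $M_q=L_q$. For $p\le k<q$ we have $M_k<L_k$. The donor index is the largest index $j$ with $p<j\le q$ and $\mu_j=\mu_q$, and it satisfies $\mu_j>\lambda_q\ge0$, or at least $\mu_j\ge1$; this must be handled with some care. The receiver index is the smallest index $i\ge p$ with $\mu_i=\mu_p$. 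Moving one unit from position $j$ to position $i$ keeps the sequence weakly decreasing. This is exactly the choice of first and last positions within the blocks of equal parts, and the parts in question are $a=\mu_i\ge b=\mu_j\ge1$.

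I then check that $\eta\unlhd\lambda$. The partial sums of $\eta$ are $M_k+1$ for $i\le k<j$ and $M_k$ otherwise. For $k<p$ there is nothing to check. For $p\le k<q$ we have $M_k+1\le L_k$ by integrality, since $M_k<L_k$. For $q\le k<j$ I need $M_k<L_k$ as well; this is where the choice of $j$ and $q$ must be made carefully. I would instead take $q$ as the smallest index $\ge p$ with $\mu_{q+1}<\mu_q$ or $M_{q}=L_{q}$, and take $j$ inside the range where the strict inequality holds. Concretely, I expect a cleaner route to pick $j$ as the first index $>p$ at which $\mu_j<\lambda_j$; such an index exists because the sum of the surplus $\lambda_p-\mu_p$ must be compensated.

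The main obstacle is this bookkeeping: choosing $i$ and $j$ so that weak decrease is preserved and no partial sum overshoots. I expect the standard fix to be to take $j$ as the last position of its block of equal parts and $i$ as the first position of its block, and then to verify $M_k<L_k$ on $[i,j)$ using the minimality of $q$. A degenerate case occurs when $b=1$, which creates a zero part that is then deleted, and this requires no extra argument.
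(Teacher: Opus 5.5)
The paper gives no proof of this lemma; it only cites Muirhead, so your argument has to stand on its own. Your reduction via $\Phi$ to a one-step claim is sound: it suffices that some single move takes $\mu$ to $\eta$ with $\mu\lhd\eta\unlhd\lambda$. The genuine gap is that you never establish this one-step claim. You flag its verification as ``the main obstacle'' and leave it open, and the fallback you propose is wrong.

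Choosing $j$ as the first index $>p$ with $\mu_j<\lambda_j$ has the inequality backwards, since a donor must be a position where $\mu$ exceeds $\lambda$. Such an index need not exist, for example when $\mu=(2,2,1,1)\lhd\lambda=(3,1,1,1)$. When it does exist, the move can overshoot. For $\mu=(3,3,1,1)\lhd\lambda=(4,2,2)$ the rule gives $p=1$ and $j=3$, hence $\eta=(4,3,1)$. Its second partial sum is $7$, which exceeds $\lambda_1+\lambda_2=6$.

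Your first construction was in fact the right one. With your definitions, $j$ is simply $q$ and $i$ is simply $p$, so the range $q\le k<j$ you worried about is empty. What is missing is a proof that $p$ starts its block of equal parts and $q$ ends its block. That is what lets you add $1$ at $p$ and subtract $1$ at $q$ without reordering, and it is what justifies your formula for the partial sums of $\eta$. Both facts follow from the choice of $p$ and $q$.

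\begin{enumerate}
\item If $p>1$, then $M_k=L_k$ for all $k<p$, so $\mu_{p-1}=\lambda_{p-1}\ge\lambda_p>\mu_p$.
\item From $M_{q-1}<L_{q-1}$ and $M_q=L_q$ you get $\mu_q>\lambda_q\ge0$, so $b=\mu_q\ge1$.
\item From $M_q=L_q$ and $M_{q+1}\le L_{q+1}$ you get $\mu_{q+1}\le\lambda_{q+1}\le\lambda_q<\mu_q$.
\end{enumerate}

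Define $\eta$ by $\eta_p=\mu_p+1$, $\eta_q=\mu_q-1$, and $\eta_k=\mu_k$ otherwise. By the facts above, $\eta$ is weakly decreasing; the only nontrivial checks are at positions $p-1,p$ and $q,q+1$. Its partial sums are $M_k+1\le L_k$ for $p\le k<q$, by strictness and integrality, and $M_k\le L_k$ elsewhere. Hence $\mu\lhd\eta\unlhd\lambda$. Add these lines and delete the fallback paragraph, and your proof is complete and self-contained.
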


The following folkloric sufficient condition underlies all three
certificates, and we
include a proof for completeness.

\begin{proposition}\label{prop:s+.e-e}
Let $\lambda,\mu\vdash n$.  If $\mu\unlhd\lambda$, then
$e_\mu-e_\lambda$ is Schur positive.
\end{proposition}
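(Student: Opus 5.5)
The plan is to apply $\omega$ and reduce to the conjugate statement about complete homogeneous functions, where the Kostka numbers do the work. By \cref{def:omega}, $\omega(e_\mu)=h_\mu$, $\omega$ maps Schur functions to Schur functions, and $\omega(s_\nu)=s_{\nu'}$. So $e_\mu-e_\lambda$ is Schur positive if and only if $h_\mu-h_\lambda$ is. It therefore suffices to show $[s_\nu]h_\mu\ge[s_\nu]h_\lambda$ for every $\nu$ whenever $\mu\unlhd\lambda$.

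First, using \cref{lem:transfer-dominance}, I reduce to a single step of the chain $\mu=\eta^{(0)}\lhd\cdots\lhd\eta^{(N)}=\lambda$. The reduction is justified because the telescoping sum
\[
e_\mu-e_\lambda=\sum_{t=0}^{N-1}\bigl(e_{\eta^{(t)}}-e_{\eta^{(t+1)}}\bigr)
\]
is a sum of Schur-positive functions if each summand is. In a single step, the two partitions share all parts except a pair $(b,c)$ with $b\ge c\ge1$, which is replaced by $(b+1,c-1)$. Since $e_{\eta^{(t)}}=e_\kappa e_be_c$ and $e_{\eta^{(t+1)}}=e_\kappa e_{b+1}e_{c-1}$ for the common parts $\kappa$, we get
\[
e_{\eta^{(t)}}-e_{\eta^{(t+1)}}=e_\kappa\,(e_be_c-e_{b+1}e_{c-1}).
\]
Products of Schur-positive functions are Schur positive by the Littlewood--Richardson rule; alternatively, repeated application of \cref{prop:Pieri} shows that multiplying a Schur-positive function by $e_d$ keeps it Schur positive. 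So everything reduces to the two-part case $e_be_c-e_{b+1}e_{c-1}$.

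For the two-part case, I expand both products with \cref{prop:Pieri} applied to $e_c s_{(1^b)}$ and to $e_{c-1}s_{(1^{b+1})}$. Alternatively, I apply $\omega$ and use $h_bh_c=\sum_{k=0}^{c}s_{(b+c-k,\,k)}$, which is exactly the identity $s_is_j=\sum_k s_{(i+j-k,k)}$ already used in the proof of \cref{thm:Y.000.s}, valid since $b\ge c$. Likewise $h_{b+1}h_{c-1}=\sum_{k=0}^{c-1}s_{(b+c-k,\,k)}$, since $b+1\ge c-1$. Subtracting leaves $h_bh_c-h_{b+1}h_{c-1}=s_{(b,c)}$. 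Applying $\omega$ then gives $e_be_c-e_{b+1}e_{c-1}=s_{(b,c)'}$, which is Schur positive.

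There is no serious obstacle. The only point needing care is the bookkeeping in \cref{lem:transfer-dominance} when $c=1$ produces a zero part. This is harmless because $e_0=h_0=1$, and the formula still gives $h_bh_1-h_{b+1}=s_{(b,1)}$.
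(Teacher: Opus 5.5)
Your proposal is correct and follows the paper's proof: telescope along the chain from \cref{lem:transfer-dominance}, factor out the common parts $e_\kappa$, and use the Pieri rule to get $e_be_c-e_{b+1}e_{c-1}=s_{(2^c,\,1^{b-c})}$. The detour through $\omega$ and $h_bh_c-h_{b+1}h_{c-1}=s_{(b,c)}$ is the conjugate form of the same computation, so your opening remark about Kostka numbers doing the work is never actually used.
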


\begin{proof}
If $\lambda=\mu$, the assertion is immediate.  Otherwise, by the
transfer characterization in \cref{lem:transfer-dominance}, it is enough
to consider the case in which $\lambda$ is obtained from $\mu$ by
replacing two parts $(a,b)$, where $a\ge b\ge1$, by
$(a+1,\,b-1)$ and then deleting a zero part if necessary.  Let
$\gamma$ consist of the other parts.  Since $e_r=s_{(1^r)}$,
\cref{prop:Pieri} gives
\begin{align*}
e_\mu-e_\lambda
=(e_ae_b-e_{a+1}e_{b-1})e_\gamma
=\left(
\sum_{j=0}^{b}s_{(2^j,\,1^{a+b-2j})}
-\sum_{j=0}^{b-1}s_{(2^j,\,1^{a+b-2j})}
\right)
e_\gamma
=s_{(2^b,\,1^{a-b})}e_\gamma,
\end{align*}
which is Schur positive.
\end{proof}

Throughout this section, 
let
\begin{equation}\creflabel[def]{def:f.criterion}
f=\sum_{\alpha\in\mathcal A}a_\alpha e_\alpha
-\sum_{\beta\in\mathcal B}b_\beta e_\beta,
\end{equation}
where $\mathcal A$ and $\mathcal B$ are finite sets of compositions of the same integer~$n$, and $a_\alpha,b_\beta\in\mathbb Z_{\ge 0}$.
The integral formulation entails no loss of generality when the
coefficients are rational: multiplying $f$ by a positive common
denominator preserves Schur positivity and reduces the problem to the
integral case.

Here is the first certificate for Schur positivity; we call it the
\emph{matrix criterion}.

\begin{theorem}[Matrix criterion]\label{thm:s+.matrix}
Let $f$ be a symmetric function defined by \cref{def:f.criterion}.
Suppose that there exist nonnegative integers $c_{\alpha\beta}$, indexed by
$(\alpha,\beta)\in\mathcal A\times\mathcal B$, such that
\begin{enumerate}
\item\creflabel[itm]{itm:matrix.1}
$c_{\alpha\beta}=0$ unless $\alpha\unlhd\beta$,
\item\creflabel[itm]{itm:matrix.2}
$\sum_{\alpha\in\mathcal A}c_{\alpha\beta}=b_\beta$ for every $\beta\in\mathcal B$, and
\item\creflabel[itm]{itm:matrix.3}
$\sum_{\beta\in\mathcal B}c_{\alpha\beta}\le a_\alpha$ for every $\alpha\in\mathcal A$.
\end{enumerate}
Then $f$ is Schur positive.
\end{theorem}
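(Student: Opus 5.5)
The plan is to decompose $f$ directly using the transport coefficients $c_{\alpha\beta}$. Each negative term is written as a combination of positive terms routed along dominance-compatible pairs. By \cref{itm:matrix.2}, for every $\beta\in\mathcal B$ we have $b_\beta=\sum_{\alpha\in\mathcal A}c_{\alpha\beta}$, and therefore
\[
\sum_{\beta\in\mathcal B}b_\beta e_\beta
=\sum_{\alpha\in\mathcal A}\sum_{\beta\in\mathcal B}c_{\alpha\beta}e_\beta.
\]
Substituting this into \cref{def:f.criterion} and regrouping by $\alpha$ gives
\[
f=\sum_{\alpha\in\mathcal A}\Bigl(a_\alpha-\sum_{\beta\in\mathcal B}c_{\alpha\beta}\Bigr)e_\alpha
+\sum_{\alpha\in\mathcal A}\sum_{\beta\in\mathcal B}c_{\alpha\beta}\,(e_\alpha-e_\beta).
\]
This identity is purely formal, since all sums are finite.

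Next I would check that each of the two sums on the right is Schur positive. In the first sum, the coefficient $a_\alpha-\sum_\beta c_{\alpha\beta}$ is nonnegative by \cref{itm:matrix.3}. Each $e_\alpha=e_{\rho(\alpha)}$ is a product of the functions $e_{i}=s_{(1^{i})}$, so it is Schur positive by repeated application of the Pieri rule in \cref{prop:Pieri}. In the second sum, every term with $c_{\alpha\beta}\ne0$ satisfies $\alpha\unlhd\beta$ by \cref{itm:matrix.1}. By definition this means $\rho(\alpha)\unlhd\rho(\beta)$, and both are partitions of $n$. \cref{prop:s+.e-e} then gives that $e_\alpha-e_\beta=e_{\rho(\alpha)}-e_{\rho(\beta)}$ is Schur positive, and $c_{\alpha\beta}\ge0$. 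A nonnegative combination of Schur-positive functions is Schur positive, so $f$ is Schur positive.

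There is no real obstacle here. The only delicate points are bookkeeping ones. First, dominance for compositions must be read through $\rho$, so that \cref{prop:s+.e-e} applies to the underlying partitions. Second, \cref{itm:matrix.2} must be an equality, so that the negative part is absorbed exactly. Only the positive side is allowed slack, via \cref{itm:matrix.3}.
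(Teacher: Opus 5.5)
Your proposal is correct and follows essentially the same route as the paper: it rewrites $f$ via Condition (2) as $\sum_{\alpha,\beta}c_{\alpha\beta}(e_\alpha-e_\beta)$ plus the slack terms $\bigl(a_\alpha-\sum_\beta c_{\alpha\beta}\bigr)e_\alpha$. It then invokes \cref{prop:s+.e-e} for the first part and Condition (3) for the second. Your remarks on reading dominance through $\rho$ and on why each $e_\alpha$ is Schur positive just make explicit what the paper leaves implicit.
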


Here is the matrix behind the name of the criterion.  Write
\[
\mathcal A=\{\alpha^{(1)},\dots,\alpha^{(p)}\}, 
\quad
\mathcal B=\{\beta^{(1)},\dots,\beta^{(q)}\},
\quad\text{and}\quad
c_{ij}=c_{\alpha^{(i)}\beta^{(j)}}.
\]
The three conditions in
\cref{thm:s+.matrix} can be displayed as
\[
\renewcommand{\arraystretch}{1.25}
\begin{array}{c@{\hspace{2em}}cccc}
\toprule
 & \beta^{(1)} & \cdots & \beta^{(q)} & \text{row sums}\\
\midrule
\alpha^{(1)} & c_{11} & \cdots & c_{1q} & \le a_{\alpha^{(1)}}\\
\vdots & \vdots & \ddots & \vdots & \vdots\\
\alpha^{(p)} & c_{p1} & \cdots & c_{pq} & \le a_{\alpha^{(p)}}\\
\midrule
\text{column sums} & =b_{\beta^{(1)}} & \cdots & =b_{\beta^{(q)}} &\\
\bottomrule
\end{array}
\qquad
c_{ij}=0\quad\text{if }\alpha^{(i)}\ntrianglelefteq\beta^{(j)}.
\]
Thus row~$i$ has capacity $a_{\alpha^{(i)}}$, while column~$j$ has
demand $b_{\beta^{(j)}}$, which may be supplied only by rows satisfying
$\alpha^{(i)}\unlhd\beta^{(j)}$.

\begin{proof}
By the definition of $f$ and \cref{itm:matrix.2}, we can recast $f$ as
\[
f=
\sum_{\alpha\in\mathcal A,\ \beta\in\mathcal B}
c_{\alpha\beta}(e_\alpha-e_\beta)
+
\sum_{\alpha\in\mathcal A}
\brk3{a_\alpha-\sum_{\beta\in\mathcal B}c_{\alpha\beta}}e_\alpha.
\]
The first sum is Schur positive by \cref{itm:matrix.1,prop:s+.e-e}, and the second sum is $e$-positive by \cref{itm:matrix.3}.
Hence $f$ is Schur positive. 
\end{proof}

In fact, \cref{prop:s+.e-e,thm:s+.matrix} are equivalent.
This can be seen by setting 
\[
\mathcal A=\{\mu\},\quad
\mathcal B=\{\lambda\},
\quad\text{and}\quad
c_{\mu\lambda}=1.
\] 
Then \cref{thm:s+.matrix} yields the Schur positivity of $f=e_\mu-e_\lambda$;
this proves \cref{prop:s+.e-e}.

The second criterion is of Hall's type. 
We prove it using Hall’s marriage theorem;
see \citet[Theorem~2.1.2]{Die25B} or~\citet{Hal35} for instance.

\begin{theorem}[Hall criterion]
\label{thm:s+.Hall}
Let $f$ be a symmetric function defined by \cref{def:f.criterion}.
Denote
\begin{equation}\creflabel[def]{def:Gamma}
\Gamma(\beta)
=\{\alpha\in\mathcal A\colon\alpha\unlhd\beta\}
\quad\text{for any $\beta\in\mathcal B$.}
\end{equation}
Then $f$ is Schur positive if either
\begin{equation}\creflabel[ineq]{ineq:s+.+}
\sum_{\alpha\in A}a_\alpha
\ge\sum_{\Gamma(\beta)\subseteq A}b_\beta
\qquad\text{for all $A\subseteq\mathcal A$},
\end{equation}
or
\begin{equation}\creflabel[ineq]{ineq:s+.-}
\sum_{\alpha\in\Gamma(B)}a_\alpha
\ge\sum_{\beta\in B}b_\beta
\qquad\text{for all $B\subseteq\mathcal B$}.
\end{equation}
\end{theorem}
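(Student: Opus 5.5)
The plan is to reduce to the matrix criterion of \cref{thm:s+.matrix}: from either Hall condition we will build nonnegative integers $c_{\alpha\beta}$ satisfying \cref{itm:matrix.1,itm:matrix.2,itm:matrix.3}. The cleanest route is to show first that \cref{ineq:s+.+} and \cref{ineq:s+.-} are equivalent, so that only \cref{ineq:s+.-} needs to be handled.

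\emph{Equivalence of the two conditions.} Assume \cref{ineq:s+.-} and let $A\subseteq\mathcal A$. Set $B=\{\beta\in\mathcal B\colon\Gamma(\beta)\subseteq A\}$. Then $\Gamma(B)=\bigcup_{\beta\in B}\Gamma(\beta)\subseteq A$, so
\[
\sum_{\alpha\in A}a_\alpha\ge\sum_{\alpha\in\Gamma(B)}a_\alpha\ge\sum_{\beta\in B}b_\beta,
\]
which is \cref{ineq:s+.+}. Conversely, assume \cref{ineq:s+.+} and let $B\subseteq\mathcal B$. Put $A=\Gamma(B)$. Every $\beta\in B$ satisfies $\Gamma(\beta)\subseteq A$, so
\[
\sum_{\alpha\in\Gamma(B)}a_\alpha\ge\sum_{\Gamma(\beta)\subseteq A}b_\beta\ge\sum_{\beta\in B}b_\beta,
\]
using $b_\beta\ge0$. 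This gives \cref{ineq:s+.-}.

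\emph{Applying Hall's theorem.} Under \cref{ineq:s+.-}, form a bipartite graph $H$. Replace each $\beta\in\mathcal B$ by $b_\beta$ copies (the demand side) and each $\alpha\in\mathcal A$ by $a_\alpha$ copies (the supply side). Join a copy of $\beta$ to a copy of $\alpha$ exactly when $\alpha\unlhd\beta$. Take any set $D$ of demand copies, and let $B$ be the set of $\beta$ having at least one copy in $D$. The neighborhood of $D$ then consists of all copies of all $\alpha\in\Gamma(B)$, so its size is
\[
\sum_{\alpha\in\Gamma(B)}a_\alpha\ge\sum_{\beta\in B}b_\beta\ge\abs{D}.
\]
Hall's marriage theorem therefore gives a matching saturating all demand copies. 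Define $c_{\alpha\beta}$ as the number of matched edges between copies of $\beta$ and copies of $\alpha$. Conditions \cref{itm:matrix.1,itm:matrix.2,itm:matrix.3} hold by construction: edges exist only when $\alpha\unlhd\beta$, each demand copy is matched exactly once, and each supply copy is used at most once. \Cref{thm:s+.matrix} then yields the Schur positivity of $f$.

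\emph{Main obstacle.} The only delicate point is the correct passage from copies back to types. One must check that the Hall inequality for an arbitrary set of copies reduces to the type-level inequality \cref{ineq:s+.-}. This works because the neighborhood depends only on which types appear in $D$, while $\abs{D}$ is at most the full total demand of those types. Degenerate cases, where some $a_\alpha$ or $b_\beta$ equals zero or $\Gamma(\beta)$ is empty, are handled automatically. For instance, if $\Gamma(\beta)=\emptyset$ and $b_\beta>0$, then taking $B=\{\beta\}$ in \cref{ineq:s+.-} gives $0\ge b_\beta$, a contradiction, so this situation is excluded by the hypothesis.
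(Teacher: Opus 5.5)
Your proposal is correct and matches the paper's proof essentially step for step. You first prove the equivalence of \cref{ineq:s+.+} and \cref{ineq:s+.-} using the sets $\{\beta\colon\Gamma(\beta)\subseteq A\}$ and $\Gamma(B)$, which are the paper's $\Gamma^{-1}(A)$ and $\Gamma(B)$; you then apply Hall's marriage theorem to the bipartite graph of demand copies and supply copies, with the same bound $\abs{D}\le\sum_{\beta\in B}b_\beta$, to obtain the matrix $c_{\alpha\beta}$ required by \cref{thm:s+.matrix}.
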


\begin{proof}
We first show the equivalence of inequality systems \eqref{ineq:s+.+} and \eqref{ineq:s+.-}. 
For any $A\subseteq\mathcal A$, denote
\[
\Gamma^{-1}(A)=\{\beta\in\mathcal B\colon \Gamma(\beta)\subseteq A\}.
\]

Assume \cref{ineq:s+.-} and let $A\subseteq\mathcal A$. 
Since $\Gamma(\Gamma^{-1}(A))\subseteq A$, 
\[
\sum_{\Gamma(\beta)\subseteq A}b_\beta
=\sum_{\beta\in\Gamma^{-1}(A)}b_\beta
\le
\sum_{\alpha\in\Gamma(\Gamma^{-1}(A))}a_\alpha
\le
\sum_{\alpha\in A}a_\alpha.
\]
This verifies \cref{ineq:s+.+}.
Conversely, 
assume \cref{ineq:s+.+} and let $B\subseteq\mathcal B$. 
Since $B\subseteq\Gamma^{-1}(\Gamma(B))$, 
\[
\sum_{\alpha\in \Gamma(B)}a_\alpha
\ge
\sum_{\beta\in\Gamma^{-1}(\Gamma(B))}b_\beta
\ge
\sum_{\beta\in B}b_\beta.
\]
This verifies \cref{ineq:s+.-}, 
and the equivalence of \cref{ineq:s+.-,ineq:s+.+}. 

Now we show the Schur positivity of $f$ assuming \cref{ineq:s+.-}. 
Construct a bipartite graph with left vertex set
\[
L=\{(\beta,j)\colon \beta\in\mathcal B,\ 1\le j\le b_\beta\}
\]
and right vertex set
\[
R=\{(\alpha,i)\colon \alpha\in\mathcal A,\ 1\le i\le a_\alpha\},
\]
where $(\beta,j)$ is adjacent to $(\alpha,i)$ if and only if
$\alpha\unlhd\beta$.
For any $U\subseteq L$, let
\[
S_U=\{\beta\in\mathcal B\colon \text{$(\beta,j)\in U$ for some $j$}\}.
\]
Then the neighborhood of $U$ is
\[
N(U)=\{(\alpha,i)\colon \alpha\in\Gamma(S_U),\ 1\le i\le a_\alpha\}.
\]
By \cref{ineq:s+.-},
\[
\abs{N(U)}
=
\sum_{\alpha\in\Gamma(S_U)}a_\alpha
\ge
\sum_{\beta\in S_U}b_\beta
\ge
\abs{U}.
\]
Hence, Hall's marriage theorem gives a matching $M$ that saturates $L$.
For any $(\alpha,\beta)\in\mathcal A\times\mathcal B$, 
let $c_{\alpha\beta}$ be the number of edges in $M$
between the copies of $\beta$ and the copies of $\alpha$. Then
$c_{\alpha\beta}=0$ unless $\alpha\unlhd\beta$,
\[
\sum_{\alpha\in\mathcal A}c_{\alpha\beta}=b_\beta
\quad\text{and}\quad
\sum_{\beta\in\mathcal B}c_{\alpha\beta}\le a_\alpha.
\]
Therefore $f$ is Schur positive by \cref{thm:s+.matrix}.
\end{proof}

A particular case of \cref{thm:s+.Hall} is the following.

\begin{corollary}[Prefix Hall criterion]\label{cor:s+.Hall.prefix}
Let $f$ be a symmetric function defined by \cref{def:f.criterion}.
Suppose that the compositions in $\mathcal A$ can be ordered as $\alpha^{(0)},\dots,\alpha^{(m)}$, so that
for each $\beta\in\mathcal B$, there is an integer
$0\le M_\beta\le m$ such that 
\[
\Gamma(\beta)=\{\alpha^{(i)}\colon 0\le i\le M_\beta\}.
\]
Then $f$ is Schur positive if 
\begin{equation}\creflabel[ineq]{ineq:s+.q}
\sum_{i=0}^{q}a_{\alpha^{(i)}}\ge\sum_{M_\beta\le q}b_\beta
\qquad\text{for any $0\le q\le m$}.
\end{equation}
\end{corollary}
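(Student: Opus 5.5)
The plan is to derive the prefix criterion from the Hall criterion (\cref{thm:s+.Hall}) by checking its first inequality system \cref{ineq:s+.+}, namely
\[
\sum_{\alpha\in A}a_\alpha\ge\sum_{\Gamma(\beta)\subseteq A}b_\beta
\qquad\text{for every }A\subseteq\mathcal A.
\]
The prefix hypothesis makes every neighborhood $\Gamma(\beta)$ an initial segment $\{\alpha^{(0)},\dots,\alpha^{(M_\beta)}\}$. So the family of relevant constraints should collapse to the $m+1$ prefix inequalities \cref{ineq:s+.q}.

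Fix an arbitrary $A\subseteq\mathcal A$. If $A$ does not contain $\alpha^{(0)}$, then no $\beta$ satisfies $\Gamma(\beta)\subseteq A$, because every $\Gamma(\beta)$ contains $\alpha^{(0)}$ (since $M_\beta\ge0$). In that case the right side of \cref{ineq:s+.+} is zero and the inequality holds trivially, as all $a_\alpha\ge0$.

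Otherwise, let $q$ be the largest index such that $\{\alpha^{(0)},\dots,\alpha^{(q)}\}\subseteq A$. For each $\beta$, the prefix $\Gamma(\beta)$ lies in $A$ exactly when $M_\beta\le q$, since $\alpha^{(q+1)}\notin A$ when $q<m$. Hence
\[
\sum_{\Gamma(\beta)\subseteq A}b_\beta=\sum_{M_\beta\le q}b_\beta\le\sum_{i=0}^{q}a_{\alpha^{(i)}}\le\sum_{\alpha\in A}a_\alpha .
\]
The middle inequality is \cref{ineq:s+.q}, and the last uses nonnegativity of the $a_\alpha$ together with the inclusion of the prefix in $A$. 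This verifies \cref{ineq:s+.+}, and \cref{thm:s+.Hall} yields Schur positivity.

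The only point needing care is the identification of $\{\beta:\Gamma(\beta)\subseteq A\}$ with $\{\beta:M_\beta\le q\}$ via the maximal prefix contained in $A$. Once that is stated, there is no real obstacle. Alternatively, one could build the matrix $c_{\alpha\beta}$ of \cref{thm:s+.matrix} greedily, processing the $\beta$ in increasing order of $M_\beta$ and filling demand from the lowest-indexed rows with remaining capacity. However, the reduction to the Hall criterion is shorter and is the route I would take.
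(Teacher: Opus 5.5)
Your proof is correct and follows the paper's route: reduce to the first Hall system \cref{ineq:s+.+}, dispose of the case where no $\Gamma(\beta)$ lies in $A$, and bound the demand by a prefix sum using \cref{ineq:s+.q}. The only cosmetic difference is in the choice of the prefix index. The paper takes $M_A=\max\{M_\beta\colon \Gamma(\beta)\subseteq A\}$ and gets only the inequality $\sum_{\Gamma(\beta)\subseteq A}b_\beta\le\sum_{M_\beta\le M_A}b_\beta$, whereas you take the longest prefix contained in $A$, which identifies the two index sets exactly.
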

\begin{proof}
By \cref{thm:s+.Hall}, it suffices to confirm \cref{ineq:s+.+}.
Let $A\subseteq\mathcal A$ and
\[
Y_A=\{\beta\in\mathcal B\colon \Gamma(\beta)\subseteq A\}.
\]
If $Y_A=\emptyset$, then \cref{ineq:s+.+} has right side zero
and it holds trivially. If $Y_A\ne\emptyset$, we can define
\[
M_A=\max\{M_\beta\colon \beta\in Y_A\}.
\] 
Then $0\le M_A\le m$ and $\{\alpha^{(0)},\dots,\alpha^{(M_A)}\}\subseteq A$. By \cref{ineq:s+.q},
\[
\sum_{\Gamma(\beta)\subseteq A}b_\beta
=\sum_{\beta\in Y_A}b_\beta
\le \sum_{M_\beta\le M_A}b_\beta
\le \sum_{i=0}^{M_A} a_{\alpha^{(i)}}
\le \sum_{\alpha\in A} a_\alpha.
\]
This verifies \cref{ineq:s+.+} and completes the proof. 
\end{proof}

The third criterion is formulated in terms of order ideals for
dominance.  A set $\mathcal I$ of compositions is an \emph{order ideal}
if
\[
\alpha\in\mathcal I,
\quad
\beta\unlhd\alpha
\quad\Longrightarrow\quad
\beta\in\mathcal I.
\]

\begin{theorem}[Order-ideal criterion]\label{thm:s+.ideal}
Let $f$ be a symmetric function defined by \cref{def:f.criterion}.
For any set~$\mathcal I$ of compositions, set
\[
\Delta(\mathcal I)
=\sum_{\alpha\in \mathcal A\cap\mathcal I}a_\alpha
-\sum_{\beta\in \mathcal B\cap\mathcal I}b_\beta.
\]
Then $f$ is Schur positive if $\Delta(\mathcal I)\ge0$ for any order ideal $\mathcal I$.
\end{theorem}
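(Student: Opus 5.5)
The plan is to reduce the order-ideal criterion to the Hall criterion in the form \cref{ineq:s+.-}, by showing that for every $B\subseteq\mathcal B$,
\[
\sum_{\alpha\in\Gamma(B)}a_\alpha\ge\sum_{\beta\in B}b_\beta .
\]
Once this holds, \cref{thm:s+.Hall} gives the Schur positivity of $f$.

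Fix $B\subseteq\mathcal B$ and let $\mathcal I$ be the down-closure of $B$ under dominance. Concretely, $\mathcal I$ consists of all compositions $\gamma$ of $n$ with $\gamma\unlhd\beta$ for some $\beta\in B$. Dominance on compositions is defined through the underlying partitions, so it is a preorder: it is reflexive and transitive. Hence $\mathcal I$ is an order ideal and contains $B$. By definition of $\Gamma$ in \cref{def:Gamma}, we have $\mathcal A\cap\mathcal I=\Gamma(B)$. The hypothesis $\Delta(\mathcal I)\ge0$ then reads
\[
\sum_{\alpha\in\Gamma(B)}a_\alpha\ \ge\ \sum_{\beta\in\mathcal B\cap\mathcal I}b_\beta\ \ge\ \sum_{\beta\in B}b_\beta .
\]
The second inequality uses $B\subseteq\mathcal B\cap\mathcal I$ and $b_\beta\ge0$. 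This is exactly \cref{ineq:s+.-}, so \cref{thm:s+.Hall} finishes the proof.

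The only point needing care is the set-theoretic bookkeeping: the order ideal must be taken inside the class of compositions of $n$, or of all compositions; either choice is fine, since incomparable sizes never enter. Compositions with the same underlying partition must also be handled consistently. Transitivity and reflexivity of $\unlhd$ through $\rho$ cover both issues. I expect no real obstacle. It may also be worth remarking that the converse holds: given the Hall condition and an order ideal $\mathcal I$, take $B=\mathcal B\cap\mathcal I$. Then every $\alpha\in\Gamma(B)$ lies in $\mathcal I$, and \cref{ineq:s+.-} gives $\Delta(\mathcal I)\ge0$. This sets up the later equivalence statement.
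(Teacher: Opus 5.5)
Your proposal is correct and matches the paper's proof: you take the order ideal generated by $B$, observe that $\mathcal A\cap\mathcal I=\Gamma(B)$ and $B\subseteq\mathcal B\cap\mathcal I$, and feed the resulting inequality into the Hall criterion in the form \cref{ineq:s+.-}. Your closing remark on the converse is also sound; it gives a direct Hall-to-ideal implication, whereas the paper's proof of \cref{thm:equivalence} goes from the matrix condition to the ideal condition.
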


\begin{proof}
Let $B\subseteq\mathcal B$, and let $\mathcal I(B)$ be the order ideal generated by $B$.
Then
\[
B\subseteq \mathcal B\cap\mathcal I(B)
\qquad\text{and}\qquad
\mathcal A\cap\mathcal I(B)=\Gamma(B), 
\]
where $\Gamma$ is defined by \cref{def:Gamma}.
By the hypothesis,
\[
0\le \Delta(\mathcal I(B))
=\sum_{\alpha\in\Gamma(B)}a_\alpha
-\sum_{\beta\in \mathcal B\cap\mathcal I(B)}b_\beta.
\]
Hence
\[
\sum_{\alpha\in\Gamma(B)}a_\alpha
\ge
\sum_{\beta\in \mathcal B\cap\mathcal I(B)}b_\beta
\ge
\sum_{\beta\in B}b_\beta.
\]
By \cref{thm:s+.Hall}, $f$ is Schur positive.
\end{proof}

We remark that the order-ideal criterion cannot be replaced by a criterion involving only principal order ideals. For instance, consider the symmetric function
\[
f=e_\alpha-e_\beta-e_\gamma+e_\delta, 
\quad\text{where $(\alpha, \beta, \gamma, \delta)=((4,4,2),\,(6,2,2),\,(5,4,1),\,(6,3,1))$.}
\]
We have $\alpha\unlhd\beta$, $\alpha\unlhd\gamma$, and $\beta\vee\gamma=\delta$.
Thus, for every principal ideal $\downarrow\!\lambda$, we have
\[
\Delta(\downarrow\!\lambda)
=
\mathbf{1}(\alpha\unlhd\lambda)
+
\mathbf{1}(\delta\unlhd\lambda)
-
\mathbf{1}(\beta\unlhd\lambda)
-
\mathbf{1}(\gamma\unlhd\lambda)
\ge 0.
\]
Indeed, if exactly one of $\beta$ and $\gamma$ lies below $\lambda$,
then $\alpha$ lies below $\lambda$ as well; and if both lie below
$\lambda$, then their least upper bound $\delta$ lies below $\lambda$.
On the other hand, for the non-principal ideal
\[
\mathcal I=(\downarrow\!\beta)\cup(\downarrow\!\gamma), 
\]
we have $\Delta(\mathcal I)=1-1-1=-1$.
Hence positivity on principal ideals does not imply positivity on all
order ideals.

The three certificates above are equivalent for the fixed expansion in
\cref{def:f.criterion}.

\begin{theorem}\label{thm:equivalence}
For the fixed signed $e_I$-expansion in \cref{def:f.criterion}, the
matrix condition in \cref{thm:s+.matrix},
the Hall condition in \cref{thm:s+.Hall},
and the order-ideal condition in \cref{thm:s+.ideal}
are equivalent. 
\end{theorem}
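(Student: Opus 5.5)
We prove the equivalence by establishing the cycle of implications matrix $\Rightarrow$ order-ideal $\Rightarrow$ Hall $\Rightarrow$ matrix. Here the Hall condition means either of the equivalent systems \cref{ineq:s+.+} or \cref{ineq:s+.-}; their equivalence was already shown at the start of the proof of \cref{thm:s+.Hall}. Two of the three links are essentially in hand.

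\textbf{Hall $\Rightarrow$ matrix.} The proof of \cref{thm:s+.Hall} already does this. Assuming \cref{ineq:s+.-}, it constructs the bipartite graph on the copies $(\beta,j)$ and $(\alpha,i)$, applies Hall's marriage theorem to get a matching saturating $L$, and counts matching edges. The resulting nonnegative integers $c_{\alpha\beta}$ satisfy \cref{itm:matrix.1,itm:matrix.2,itm:matrix.3}.

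\textbf{Order-ideal $\Rightarrow$ Hall.} The proof of \cref{thm:s+.ideal} also does this. It applies the hypothesis $\Delta(\mathcal I(B))\ge0$ to the order ideal generated by $B$, uses $\mathcal A\cap\mathcal I(B)=\Gamma(B)$, and deduces \cref{ineq:s+.-}.

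\textbf{Matrix $\Rightarrow$ order-ideal.} This is the only new step. Suppose $c_{\alpha\beta}$ satisfies \cref{itm:matrix.1,itm:matrix.2,itm:matrix.3}, and let $\mathcal I$ be any order ideal. By \cref{itm:matrix.2},
\[
\sum_{\beta\in\mathcal B\cap\mathcal I}b_\beta
=\sum_{\beta\in\mathcal B\cap\mathcal I}\ \sum_{\alpha\in\mathcal A}c_{\alpha\beta}.
\]
If $c_{\alpha\beta}\ne0$ with $\beta\in\mathcal I$, then $\alpha\unlhd\beta$ by \cref{itm:matrix.1}, so $\alpha\in\mathcal I$ because $\mathcal I$ is an order ideal. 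Hence the double sum involves only $\alpha\in\mathcal A\cap\mathcal I$. Enlarging the inner range to all $\beta\in\mathcal B$ and applying \cref{itm:matrix.3} bounds it by
\[
\sum_{\alpha\in\mathcal A\cap\mathcal I}\ \sum_{\beta\in\mathcal B}c_{\alpha\beta}
\le\sum_{\alpha\in\mathcal A\cap\mathcal I}a_\alpha .
\]
This gives $\Delta(\mathcal I)\ge0$.

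One point needs care. Dominance on compositions is only a preorder, since $I\unlhd J$ is defined through $\rho$. So distinct compositions with the same underlying partition are mutually dominated, and an order ideal containing one contains the others. The argument above only uses downward closure, so it is unaffected, and the same holds for $\mathcal I(B)$ in the second implication.

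No step is a genuine obstacle. The only things to get right are stating precisely which hypotheses are being compared, and noting that the Hall step reuses the marriage-theorem argument verbatim.
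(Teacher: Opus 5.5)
Your proposal is correct and matches the paper's proof. The paper likewise cites the proofs of \cref{thm:s+.Hall,thm:s+.ideal} for Hall $\Rightarrow$ matrix and order-ideal $\Rightarrow$ Hall, and proves only matrix $\Rightarrow$ order-ideal; the sole difference is that it writes $\Delta(\mathcal I)$ exactly as $\sum_{\alpha,\beta} c_{\alpha\beta}(\mathbf 1_{\alpha\in\mathcal I}-\mathbf 1_{\beta\in\mathcal I})$ plus the nonnegative slack $\sum_{\alpha}(a_\alpha-\sum_{\beta} c_{\alpha\beta})\mathbf 1_{\alpha\in\mathcal I}$, whereas you run the same computation as an inequality chain.
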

\begin{proof}
In view of the proofs of \cref{thm:s+.Hall,thm:s+.ideal},
it suffices to prove that the matrix condition implies the ideal condition.
Let $\mathcal I$ be an order ideal. By the matrix condition,
\[
b_\beta=\sum_{\alpha\in\mathcal A}c_{\alpha\beta}
\quad\text{and}\quad
a_\alpha-\sum_{\beta\in\mathcal B}c_{\alpha\beta}\ge 0.
\]
Hence
\[
\begin{aligned}
\Delta(\mathcal I)
&=
\sum_{\alpha\in\mathcal A}a_\alpha\mathbf 1_{\alpha\in\mathcal I}
-
\sum_{\beta\in\mathcal B}b_\beta\mathbf 1_{\beta\in\mathcal I}  \\
&=
\sum_{\alpha\in\mathcal A,\ \beta\in\mathcal B}
c_{\alpha\beta}
\bigl(\mathbf 1_{\alpha\in\mathcal I}
-\mathbf 1_{\beta\in\mathcal I}\bigr)
+
\sum_{\alpha\in\mathcal A}
\left(a_\alpha-\sum_{\beta\in\mathcal B}c_{\alpha\beta}\right)
\mathbf 1_{\alpha\in\mathcal I}.
\end{aligned}
\]
The second sum is nonnegative. For the first sum, if
$c_{\alpha\beta}>0$, then $\alpha\unlhd\beta$.
Since $\mathcal I$ is an order ideal, $\beta\in\mathcal I$ implies
$\alpha\in\mathcal I$. Thus
$\mathbf 1_{\alpha\in\mathcal I}
-\mathbf 1_{\beta\in\mathcal I}\ge0$. 
Therefore $\Delta(\mathcal I)\ge0$, so the ideal condition holds.
\end{proof}

Moreover, in view of the proofs above,
for any symmetric function $f$ defined by \cref{def:f.criterion},
if any of
\cref{thm:s+.matrix,thm:s+.Hall,thm:s+.ideal,cor:s+.Hall.prefix}
certifies its Schur positivity, then $f$ can be written as the sum of differences of the form
$e_\mu-e_\lambda$ with $\mu\unlhd\lambda$ and terms of the form
$a_\eta e_\eta$ with $a_\eta\ge0$.  In
\cref{sec:S100:S010}, we apply \cref{cor:s+.Hall.prefix} to prove
Schur positivity for the families $S^{100}_{rst}$ and
$S^{010}_{rst}$.

\section{A general elementary expansion for clique-spiders}
\label{sec:general.expansion}

\citet[Theorem~4.1]{QTW26} developed a formula for
$X_{S^{ghk}(G,H,K_m)}$. We will use the following updated version, 
which can be found from \cite[Theorem~4.5]{QTW26X}.

\begin{theorem}[\citeauthor{QTW26X}]\label{thm:3spider:clique}
Let $G$ and $H$ be rooted graphs.
For any $m,g\ge1$ and $h,k\ge0$,
\begin{align*}
\frac{X_{S^{ghk}(G,H,K_m)}}{(m-1)!}
&=\sum_{z=0}^{m-1}e_{m-1-z}X_{P^{g+h+k+z}(G,H)}
+\frac1{(m-1)!}\sum_{z=0}^{k-1}
X_{K_m^z}X_{P^{g+h+k-z-1}(G,H)}
\\
&\quad
-\sum_{\substack{x+y+z=k+m-2\\
x,y,z\ge0,\ x\le m-1}}
(1-x)e_xX_{G^{g+y}}X_{H^{h+z}}.
\end{align*}
\end{theorem}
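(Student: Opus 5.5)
The plan is to prove the formula by induction on $k$, the length of the leg to $K_m$. The only tools needed are a vertex-colouring decomposition at the attachment vertex of $K_m$, as in the proof of \cref{thm:omegaY.000}, and the path structure of $P^{l}(G,H)$.

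\textbf{Step 1: the clique factor.} Let $w$ be the vertex of $K_m$ at the end of the third leg, and let $F$ be the graph obtained by deleting $K_m-w$, so that $F=P^{g+h+k}(G,H)$ with $w$ on the path. Fix the colour $q$ of $w$. The remaining $m-1$ clique vertices contribute $(m-1)!\,e_{m-1}^{(q)}$, and by the identity proved in \cref{thm:omegaY.000},
\[
e_{m-1}^{(q)}=\sum_{z=0}^{m-1}(-x_q)^{z}e_{m-1-z}.
\]
Hence
\[
\frac{X_{S^{ghk}(G,H,K_m)}}{(m-1)!}=\sum_{z=0}^{m-1}(-1)^z e_{m-1-z}\,X_F^{[w,z]},
\]
where $X_F^{[w,z]}$ is the colouring sum of $F$ weighted by an extra $x_{\kappa(w)}^{z}$.

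\textbf{Step 2: rewriting the weighted sums.} Each $X_F^{[w,z]}$ must be expressed through honest chromatic symmetric functions. The tool is the elementary relation
\[
X_{F^{1}\text{ at }w}=p_1X_F-X_F^{[w,1]},
\]
iterated along a pendant path of length $z$ at $w$. By unrolling it I would obtain $X_F^{[w,z]}$ as an alternating combination of $X$ of $F$ with pendant paths hung at $w$, with coefficients products of $p$'s. Since $w$ lies on the path of $F$, the graph $F$ is itself a spider-like path graph. In the pendant-path graphs the new path at $w$ can then be traded, again by the deletion--contraction form of the same relation, for lengthening the $G$--$H$ path. This trade is what should produce the terms $X_{P^{g+h+k+z}(G,H)}$. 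The boundary corrections produce the products $X_{G^{g+y}}X_{H^{h+z}}$: they appear when the path is cut at the center, splitting it into a $G$-tail and an $H$-tail.

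\textbf{Step 3: induction on $k$.} Rather than track all signs directly, I would verify the formula for $k=0$ (center $=w$) from Steps~1--2, where the middle sum is empty. Then I would pass from $k$ to $k+1$ by the one-edge recurrence on the edge of the third leg incident with $K_m$. That recurrence shows the difference of the two sides changes by exactly
\[
\frac{1}{(m-1)!}X_{K_m^{k}}X_{P^{g+h-1}(G,H)}
\]
plus the shift $x+y+z=k+m-2\to k+m-1$ in the last sum. The lollipop formula in \cref{thm:barbell} would be used to identify $X_{K_m^{z}}$ where needed.

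The main obstacle is Step~2: converting the power-weighted colourings $X_F^{[w,z]}$ into path-lengthened graphs while keeping the $(1-x)$ coefficients and the range $x\le m-1$ exact. If the direct approach becomes unwieldy, I would fall back on checking the identity for $G=H=K_1$ using the known formulas for $X_{P_n}$ and lollipops. The general rooted case would then follow by linearity in the rooted pieces $G$ and $H$, because every term depends on $G$ and $H$ only through gluing at the roots along paths.
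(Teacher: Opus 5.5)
The paper quotes this theorem from QTW26X without proof, so your plan is judged on its own. Step~1 is correct. One correction: for $k\ge1$, deleting $K_m-w$ leaves $P^{g+h}(G,H)$ with a pendant path of length $k$ at the center, with $w$ its leaf. It is not $P^{g+h+k}(G,H)$ with $w$ on the path; only for $k=0$ does $w=c$ lie on the $G$--$H$ path.

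The real gap is in Steps~2--3. The relation $X_{F^1}=p_1X_F-X_F^{[w,1]}$, however often you iterate it, only relates graphs obtained by hanging paths at the fixed vertex $w$. It never moves an attachment point. So it cannot turn graphs with a third branch at $c$ into $X_{P^{g+h+z}(G,H)}$, and it cannot produce the disconnected products $X_{G^{g+y}}X_{H^{h+z}}$. There is also no deletion--contraction rule for $X$ to appeal to, since contraction lowers the degree. The missing ingredient is the triple-deletion rule of Orellana and Scott. Let $u_1$ and $v$ be the neighbours of $c$ on the third leg and on the $G$-leg; this is exactly where $g\ge1$ is used. Add the edge $u_1v$ and compare the deletions of the edges of the triangle $cu_1v$. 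For $k\ge1$ this gives
\[
X_{S^{g,h,k}(G,H,K_m)}-X_{S^{g,h+1,k-1}(G,H,K_m)}
=X_{K_m^{k-1}}X_{P^{g+h}(G,H)}-X_{P^{g+k-1}(G,K_m)}X_{H^{h}}.
\]

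Your induction on $k$ therefore has to run through this identity at the center, trading $(h,k)$ for $(h+1,k-1)$. No triangle near the $K_m$ end gives a recurrence inside the family. Iterating the identity $k$ times, the first terms give exactly the middle sum of the statement. The second terms give the part $z\le k-1$ of the last sum once $X_{P^{g+k-1}(G,K_m)}$ is expanded by \cref{thm:KPG}. The lollipop formula is not what is needed, because the $X_{K_m^z}$ stay unexpanded.

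For the base case $k=0$, finish Step~1 with a transfer computation rather than by unrolling products of $p$'s. Let $W_j(q)$ be the weight of a pendant path of length $j$ hung at a vertex of colour $q$. Then
\[
\sum_{j\ge0}W_j(q)\,t^j=\frac{\sum_{j\ge0}e^{(q)}_j\,t^j}{\sum_{i\ge0}(1-i)e_i\,t^i},
\]
so $e^{(q)}_{m-1}=\sum_i(1-i)e_iW_{m-1-i}(q)$, and
\[
\frac{X_{S^{gh0}(G,H,K_m)}}{(m-1)!}=\sum_{i=0}^{m-1}(1-i)e_i\,X_{S^{g,h,m-1-i}(G,H,K_1)}.
\]
This is \cref{thm:KPG} with $l=0$, and it is where the coefficients $(1-x)e_x$ come from. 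The $K_1$-version of the displayed triple-deletion identity expands each $X_{S^{g,h,j}(G,H,K_1)}$ into three kinds of terms: $X_{P^{g+h+j}(G,H)}$, terms $X_{P_l}X_{P^{g+h+j-l}(G,H)}$, and terms $-X_{G^{g+l-1}}X_{H^{h+j-l}}$, for $1\le l\le j$. The last kind gives the final sum at $k=0$. The path identity $\sum_{i=0}^{s}(1-i)e_iX_{P_{s-i}}=e_s$ (with $X_{P_0}=1$) collapses the rest to $\sum_z e_{m-1-z}X_{P^{g+h+z}(G,H)}$.

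Your fallback does not remove the difficulty. The linearity can be made precise: both sides are $\mathrm{Sym}$-bilinear in the root-colour-refined chromatic functions of $G$ and $H$, and those of end-rooted paths span. But this only reduces the claim to $G=H=K_1$ with arbitrary $g,h$, that is, to the clique-spiders $S^{ghk}(K_1,K_1,K_m)$. These are not covered by the path and lollipop formulas and still need the identity above.
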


The proof of the following result begins by specializing
\cref{thm:3spider:clique} to $3$-clique-spiders; obtaining the stated
normalized $e_I$-expansion then requires nontrivial combinatorial
reindexing and cancellation.

\begin{theorem}\label{thm:Y.rst}
Let $g,r,s,t\ge 1$, $h,k\ge 0$, and $n=g+h+k+r+s+t-2$. Then
\begin{align*}
Y_{S^{ghk}_{rst}}
&=
\sum_{I\in \mathcal A^{rs}_{n}\cup \mathcal A^{rs}_{n-1}\cup\cdots\cup \mathcal A^{rs}_{n-t+1}}
w_I e_{(n-\abs{I},I)}
+
\sum_{\substack{\abs{J}\le t+k-1,\ j_{-1}\ge t\\ I\in \mathcal A^{rs}_{n-\abs{J}}}}
w_Iw_J e_{(I,J)}
\\
&\qquad+
\sum_{I\in \mathcal B^{rs}_{n}\cup \mathcal B^{rs}_{n-1}\cup\cdots\cup \mathcal B^{rs}_{n-t+1}}
f(I)e_{(n-\abs{I},I)}
+
\sum_{\substack{\abs{J}\le t+k-1,\ j_{-1}\ge t\\ I\in \mathcal B^{rs}_{n-\abs{J}}}}
f(I)w_J e_{(I,J)}
\\
&\qquad+
\sum_{\substack{
aI\vDash n,\ a\ge r+g\\
\abs{I}-i_{-1}\ge s+h\\
\ell(I)\ge2, \
i_{-1}\le t-1,\
i_{-2}\ge s}}
aw_I e_{(a,I)}
-
\sum_{\substack{
I\!J\vDash n,\ \abs{I}\ge r+g,\ i_{-1}\ge r\\ 
\abs{J}\ge s+h,\ j_{-1}\ge s\\
\ell(I)\le 2\text{ or } i_{-2}\ge \min(t,\,\abs{I}-r-g+1)}}
w_Iw_Je_{(I,J)},
\end{align*}
where the symbols $J$ are compositions, and $\mathcal A_n^{xz}$, $\mathcal B_n^{xz}$ and $f(I)$ are defined in \cref{thm:barbell}. 
\end{theorem}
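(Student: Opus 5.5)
We specialize \cref{thm:3spider:clique} with $G=K_r$, $H=K_s$ and $m=t$, divide by $(r-1)!(s-1)!$, and expand every chromatic symmetric function on the right through \cref{thm:barbell}. This gives three groups of terms, handled in order.

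\textbf{First group.} The first sum is $\sum_{z=0}^{t-1}e_{t-1-z}Y_{P^{g+h+k+z}(K_r,K_s)}$. The barbell $P^{g+h+k+z}(K_r,K_s)$ has order $n-(t-1-z)$. By \cref{thm:barbell} its normalized expansion is $\sum_{I\in\mathcal A^{rs}_{n-t+1+z}}w_Ie_I+\sum_{I\in\mathcal B^{rs}_{n-t+1+z}}f(I)e_I$. Multiplying by $e_{t-1-z}=e_{n-\abs{I}}$ and letting $z$ run from $0$ to $t-1$ gives exactly the two sums over $\mathcal A^{rs}_n\cup\dots\cup\mathcal A^{rs}_{n-t+1}$ and $\mathcal B^{rs}_n\cup\dots\cup\mathcal B^{rs}_{n-t+1}$, with prefactor $e_{(n-\abs{I},I)}$. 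These unions are disjoint because the sizes differ.

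\textbf{Second group.} The second sum is $\frac1{(t-1)!}\sum_{z=0}^{k-1}X_{K_t^z}X_{P^{g+h+k-z-1}(K_r,K_s)}$. By \cref{thm:barbell}, $X_{K_t^z}/(t-1)!=\sum_{J\vDash t+z,\ j_{-1}\ge t}w_Je_J$. Setting $\abs{J}=t+z$, the range $0\le z\le k-1$ becomes $t\le\abs{J}\le t+k-1$, with the condition $j_{-1}\ge t$. Multiplying by the barbell expansion of order $n-\abs{J}$ gives the two double sums over $(I,J)$. Since $j_{-1}\ge t$ forces $\abs{J}\ge t$, the stated constraint $\abs{J}\le t+k-1$ is the whole range.

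\textbf{Third group and main obstacle.} The substantive work is the subtracted term
\[
\sum_{x+y+z=k+t-2,\ x\le t-1}(1-x)e_x\,\frac{X_{K_r^{g+y}}X_{K_s^{h+z}}}{(r-1)!(s-1)!}.
\]
Using the lollipop formula, this becomes
\[
\sum(1-x)e_x\sum_{I}w_Ie_I\sum_{J}w_Je_J,
\]
where $\abs{I}=r+g+y$ with $i_{-1}\ge r$, and $\abs{J}=s+h+z$ with $j_{-1}\ge s$. The $x=0$ terms contribute $-\sum w_Iw_Je_{(I,J)}$ over all admissible pairs with $\abs{I}+\abs{J}=n$. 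The $x=1$ terms vanish. The $x\ge2$ terms contribute positive terms $(x-1)w_Iw_Je_{(x,I,J)}$.

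The plan is to absorb the factor $e_x$ into $I$, reading $(I,x)$ or $(x,I)$ as a longer composition. The identity $w_{(I,x)}=w_I(x-1)$ is what makes this possible. Under this reading, each positive $x\ge2$ term equals, up to rearrangement inside $e$, the term $w_{I'}w_Je_{(I',J)}$ with $I'=(I,x)$ and $i'_{-1}=x\le t-1$. So it cancels against the $x=0$ term indexed by $(I',J)$, provided $(I',J)$ is admissible there. Admissibility requires $\abs{I'}\ge r+g$ and a last part of $I'$ at least $r$. Since $x\le t-1<r$ in general, the cancellation is not term-by-term. We must instead split by which block of $I$ the element $x$ joins.

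Concretely, I would sort the $x=0$ compositions $I$ by their tail. If $\ell(I)\ge3$ and $i_{-2}<\min(t,\abs{I}-r-g+1)$, I would try to match the term against an $x\ge2$ term whose $e_x$ supplies the part $i_{-2}$. The surviving negative terms are then exactly those with $\ell(I)\le2$ or $i_{-2}\ge\min(t,\abs{I}-r-g+1)$, which is the last sum in the statement.

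The unmatched positive terms should regroup as $a\,w_Ie_{(a,I)}$. Here the leading part $a\ge r+g$ collects the $K_r$-lollipop block, using $w_{(a)}=a$ and that the lollipop coefficient for $\ell=1$ is $a$. The remaining $I$ carries the $K_s$ block plus a final part $i_{-1}=x\le t-1$, with $i_{-2}\ge s$ and $\abs{I}-i_{-1}\ge s+h$. This gives the fifth sum.

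I expect this reindexing to be the main obstacle. It must check that the map $(x,I,J)\mapsto(I',J)$ is a bijection between the cancelling sets. That requires tracking the ambiguity $e_{(I,J)}=e_{\rho(I,J)}$, and verifying the boundary cases $\ell(I)=1,2$ and $\abs{I}-r-g+1<t$ separately. I would first test the claimed identity on small cases, for example $g=1$, $h=k=0$ and $r=s=t=2$, to fix the exact matching before writing the general bijection.
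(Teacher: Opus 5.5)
Your handling of the first two groups matches the paper's. The matching you guess for the subtracted group is also the one the paper uses: a positive term $(x-1)e_x\,w_Iw_Je_{(I,J)}$ with $\ell(I)\ge2$ cancels against the negative term whose first composition is $I$ with $x$ inserted as its penultimate part. The positive terms with $\ell(I)=1$ survive as the fifth sum.

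\textbf{The gap.} You only announce this (``I would try to match'', ``should regroup'') and defer the verification. That verification is the whole substance of the theorem beyond bookkeeping, so as written the third group is unproved. The missing argument is short.
\begin{itemize}
\item Write $I=Ha$ with $H\ne\epsilon$ and $a=i_{-1}\ge r$. The first part of $H$ is untouched and the other factors commute, so $w_{Hax}=w_{Hxa}$. Also $e_{(H,a,x,J)}=e_{(H,x,a,J)}$.
\item Hence the positive term equals $w_{I'}w_Je_{(I',J)}$ with $I'=Hxa$. This $I'$ is admissible in the $x=0$ sum, since $i'_{-1}=a\ge r$ and $\abs{I'}=\abs{Ha}+x\ge r+g$.
\item The map $(H,a,x,J)\mapsto(Hxa,J)$ is injective. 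Its image is exactly the admissible pairs $(I',J)$ with $\ell(I')\ge3$ and $2\le i'_{-2}\le\min(t-1,\,\abs{I'}-r-g)$.
\item Since $w_{I'}=0$ when $i'_{-2}=1$, what remains of the negative sum is precisely the last sum of the statement.
\item For $H=\epsilon$ the swap is unavailable, because $w_{ax}=a(x-1)\ne x(a-1)=w_{xa}$ in general. This is why exactly the $\ell(I)=1$ terms survive.
\item Reindex those survivors by $I:=Jx$ and use $w_{ax}w_J=a\,w_{Jx}$, valid since $J\ne\epsilon$. This gives the fifth sum, with $i_{-1}=x\le t-1$, $i_{-2}=j_{-1}\ge s$ and $\abs{I}-i_{-1}=\abs{J}\ge s+h$. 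The value $x=1$ adds only zeros.
\end{itemize}

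\textbf{Two smaller points.} First, the cancellation is composition-by-composition with identical coefficients. So there is no ambiguity from $e_{(I,J)}=e_{\rho(I,J)}$ to track. The boundary cases you list are absorbed by the $\min$ in the final condition and need no separate treatment.

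Second, your claim $x\le t-1<r$ is not justified. The theorem puts no order on $r,s,t$, and the paper later applies it to the triple $(t,r,s)$, with the smallest clique in the first position. This does not affect the penultimate-insertion matching, which never uses $x<r$.
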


\begin{proof}
Taking $(G,H,m)=(K_r,\,K_s,\,t)$ in \cref{thm:3spider:clique}, we obtain
\begin{align*}
\frac{X_{S^{ghk}_{rst}}}{(t-1)!}
&=\sum_{z=0}^{t-1} e_{t-1-z} X_{P^{g+h+k+z}(K_r,\,K_s)}
-\sum_{\substack{x+y+z=k+t-2\\ x,\,y,\,z\ge 0,\ x\le t-1}}
(1-x)e_x X_{K_r^{g+y}} X_{K_s^{h+z}}\\
&\qquad
+\frac{1}{(t-1)!}\sum_{z=0}^{k-1} X_{K_t^z} X_{P^{g+h+k-z-1}(K_r,\,K_s)}.
\end{align*}
By \cref{thm:barbell},
\begin{multline*}
\frac{X_{S^{ghk}_{rst}}}{(r-1)!(s-1)!(t-1)!}
=
\sum_{z=0}^{t-1}
e_z
\brk3{
\sum_{I\in\mathcal A^{rs}_{n-z}} 
w_I e_I
+\sum_{I\in\mathcal B^{rs}_{n-z}} 
f(I) e_I}
\\
+
\sum_{z=0}^{k-1}
\brk4{
\sum_{I\in\mathcal A^{rs}_{n-t-z}} w_I e_I
+\sum_{I\in\mathcal B^{rs}_{n-t-z}} 
f(I) e_I}
\brk4{
\sum_{J\vDash t+z,\ j_{-1}\ge t}
w_J e_J}
+\Sigma_1-\Sigma_2,
\end{multline*}
where for $\mathcal J=\{J\colon \abs{J}\ge s+h,\ j_{-1}\ge s\}$,
\[
\Sigma_1=
\sum_{\substack{
IxJ\vDash n\\
\abs{I}\ge r+g,\ i_{-1}\ge r\\
2\le x\le t-1,\ J\in \mathcal J
}}
w_{Ix}w_J e_{(I,x,J)}
\quad\text{and}\quad
\Sigma_2=
\sum_{\substack{
I\!J\vDash n\\
\abs{I}\ge r+g,\ i_{-1}\ge r\\
J\in \mathcal J}}
w_Iw_J e_{(I,J)}.
\]
The first two displayed lines are exactly the first four sums in the statement. We will simplify
$\Sigma_1-\Sigma_2$.

First we separate the case $\ell(I)=1$ in $\Sigma_1$:
\[
\Sigma_1
=
\sum_{\substack{
axJ\vDash n\\
a\ge r+g\\
2\le x\le t-1,\ J\in\mathcal J}}
w_{ax}w_Je_{(a,x,J)}
+
\sum_{\substack{
HaxJ\vDash n\\
\abs{Ha}\ge r+g,\ H\ne\epsilon,\ a\ge r\\
2\le x\le t-1,\ J\in\mathcal J}}
w_{Hax}w_Je_{(H,a,x,J)}.
\]
Note that we can introduce $I=Jx$ for the first sum, and that replacing the composition $Iax$ with~$Ixa$ in the last sum keeps the identity true. Hence
\[
\Sigma_1-\Sigma_2
=
\sum_{\substack{
aI\vDash n,\ a\ge r+g\\
\abs{I}-i_{-1}\ge s+h\\
i_{-1}\le t-1,\
i_{-2}\ge s}}
aw_I e_{(a,I)}
+
\brk4{
\sum_{\substack{
HxaJ\vDash n\\
\abs{Ha}\ge r+g,\ H\ne\epsilon,\ a\ge r\\
2\le x\le t-1,\ J\in\mathcal J}}
w_{Hxa}w_Je_{(H,x,a,J)}
-\Sigma_2}.
\]
Therefore, the difference in parentheses remains only terms in $\Sigma_2$ for which the composition $I$ is not of the form $Hxa$ such that
\[
H\ne\epsilon,\quad
2\le x\le t-1,\quad\text{and}\quad
\abs{Ha}\ge r+g,
\]
i.e.,
$\ell(I)\le 2$ or $i_{-2}\ge \min(t,\,\abs{I}-r-g+1)$, since $w_{Hxa}=0$ if $x=1$. Hence $\Sigma_1-\Sigma_2$ has the form of the desired last two sums.
\end{proof}

\cref{thm:Y.rst} will be further specialized in
\cref{sec:S100:S010} and \cref{sec:S001}.

%

%

\section{The families
\texorpdfstring{$S_{rst}^{100}$ and $S_{rst}^{010}$}{S100 and S010}}
\label{sec:S100:S010}

In this section, we specialize \cref{thm:Y.rst} to the $3$-clique-spiders
$S_{rst}^{100}$.  The resulting signed $e_I$-expansion gives its
$e$-positivity classification, while the prefix Hall criterion (\cref{cor:s+.Hall.prefix}) proves
its Schur positivity uniformly for both possible orders of the first
two clique sizes.  The results for $S_{rst}^{010}$ 
follow by symmetry.

Obtaining the compact formula in \cref{prop:Y**0.e} from
\cref{thm:Y.rst} is not a direct specialization: it also uses
\cref{lem:alpha-omega.e} to reduce the resulting expression to its
three-part truncation.

\begin{proposition}\label{prop:Y**0.e}
Let $r, s, t$ be integers such that $\min(r, s)\ge t\ge 1$,
and  $n=r+s+t-1$.
Then
\begin{align*}
Y_{S^{100}_{rst}}
&=
\sum_{\substack{abc\vDash_w n,\ a\ge r\\ b\le r-1,\ c\le t-1}}
(a-b)e_{(a,b,c)}
+
\sum_{0\le c\le t-1}
(c+r(s+t-2c-2)) e_{(r,\,s+t-1-c,\,c)}
\\
\notag
&\qquad
-
\sum_{abc\vDash_w n,\ a\ge r+1,\ b\ge s}
(a-c) e_{(a,b,c)}.
\end{align*}
\end{proposition}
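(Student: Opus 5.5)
The plan is to specialize \cref{thm:Y.rst} to $(g,h,k)=(1,0,0)$, so that $n=r+s+t-1$. Then truncate the resulting expansion to partitions with at most three parts by means of \cref{lem:alpha-omega.e}, and finally regroup what survives into the three displayed sums.

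\emph{Truncation.} For $G=S^{100}_{rst}$, three pairwise disjoint stable sets can cover all vertices: $G$ is $\max(r,s,t)$-colorable, but the relevant bound is $\alpha_k$. Instead I use the clique bound: $\omega_k(G)$ for $k\ge 3$ equals $n$ minus the vertices left uncovered. More usefully, $\lambda'_1=\ell(\lambda)\le\alpha_1(G)=3$, since a stable set meets each of the three cliques at most once, and the center together with the leg vertex forms a further clique structure, giving $\alpha(G)=3$. Hence $[e_\lambda]Y_G\ne0$ forces $\ell(\lambda)\le 3$. So in every sum of \cref{thm:Y.rst} I may discard all terms $e_K$ with $\ell(\rho(K))\ge4$. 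This is what the paper means by the "three-part truncation", and it is legitimate because the $e_\lambda$ are a basis: after collecting like terms, the coefficients with $\ell(\lambda)\ge4$ must vanish.

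\emph{Specialization.} With $k=0$, the second and fourth sums of \cref{thm:Y.rst} require $\abs J\le t-1$ and $j_{-1}\ge t$, so they are empty. The first sum runs over $I\in\mathcal A^{rs}_{m}$ with $n-t+1\le m\le n$, producing $e_{(c,I)}$ with $c=n-\abs I\le t-1$. Keeping $\ell(I)\le2$ gives $I=(a)$, which requires $a\ge\max(r,s)$, or $I=(a,b)$ with $a\ge r$, $b\ge s$, and weight $a(b-1)$. The third sum, over $\mathcal B^{rs}$ with $\ell(I)=2$, needs $i_1+1\le r\le i_2$ and $i_{-1}=i_2\ge s$, with weight $f(I)=i_2-i_1$. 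The fifth sum, with $\ell(I)=2$, gives terms $a\,i_1(i_2-1)e_{(a,i_1,i_2)}$ subject to $i_2\le t-1$, $i_1\ge s$ and $a\ge r+1$. Note that $w_I=0$ when $i_2=1$. The sixth sum, with $\ell(I)+\ell(J)\le3$, gives either $I=(a)$, $a\ge r+1$, and $J=(b)$ or $J=(b',b)$ with $b\ge s$; or $I=(a',a)$ with $a\ge r$, $a'+a\ge r+1$, and $J=(b)$, $b\ge s$.

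\emph{Regrouping.} This step is the main obstacle. I would bin all surviving terms by a normalized triple $(a,b,c)$ of weak compositions, with $a$ the part $\ge r$ coming from the $K_r$ side and $c$ the smallest or "$t$-side" part, and compare coefficients. I expect the positive $\mathcal A$ and $\mathcal B$ contributions with $b\le r-1$, $c\le t-1$ to telescope to $(a-b)$. The contributions in which $b\ge s$ and $c\le t-1$ should partly cancel against the sixth sum, leaving $-(a-c)$ over $a\ge r+1$, $b\ge s$. The residual boundary terms with $a=r$ should assemble into $(c+r(s+t-2c-2))e_{(r,s+t-1-c,c)}$. Each bin requires a short case check according to which parts can play which role, including the coincidences where two of $a,b,c$ are equal, so that the same partition arises from different index patterns. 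I would organize this by first fixing $c$, then summing the weights $w$ and $f$ as linear functions of $b$.
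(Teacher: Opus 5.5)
Your route is the paper's: specialize \cref{thm:Y.rst} at $(g,h,k)=(1,0,0)$, use \cref{lem:alpha-omega.e} with $\alpha(G)\le3$ to discard every $e_\lambda$ with $\ell(\lambda)\ge4$, and then collect terms. However, your list of surviving terms is incomplete, and the omission makes the final regrouping impossible.

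\textbf{The missing terms.} In the first and third sums of \cref{thm:Y.rst} the summand is $e_{(n-\abs{I},\,I)}$, and you correctly allow $c=n-\abs{I}=0$. But when $\abs{I}=n$ the zero part disappears, so compositions $I$ with \emph{three} parts survive the truncation. By keeping only $\ell(I)\le2$ you discard the contributions of $\mathcal A^{rs}_n$ and $\mathcal B^{rs}_n$:
\[
\sum_{abc\vDash n,\ a\ge r,\ c\ge s}a(b-1)(c-1)\,e_{(a,b,c)}
\qquad\text{and}\qquad
\sum_{abc\vDash n,\ a+1\le r\le b,\ c\ge s}(b-a)(c-1)\,e_{(a,b,c)}.
\]

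\textbf{Why they matter.} These terms absorb the cubic weights $ab(c-1)$ and $a(b-1)c$ produced by the sixth sum. Take the labelling in which $a\ge r+1$ is the $K_r$-side part, $c\ge s$, and $1\le b\le t-1$. The four cubic contributions (from $\mathcal A^{rs}_n$, the fifth sum, and the two shapes of the sixth sum) satisfy
\[
a(b-1)(c-1)+ac(b-1)-ab(c-1)-bc(a-1)=a+bc-2ac.
\]
Only after adding $a(c-1)$ (first sum, $\ell(I)=2$) and the omitted $(a-b)(c-1)$ does this collapse to $b-a$.

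\textbf{A concrete failure.} For $(r,s,t)=(3,3,3)$ the proposition gives $[e_{(4,3,1)}]Y=7-3=4$. The terms you list give $17-8-17=-8$. The missing $\mathcal B^{rs}_n$-terms, from $(1,4,3)$ and $(1,3,4)$, supply the difference $12$. So no regrouping of your inventory can reach the stated formula.

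Once the two sums above are restored, your inventory agrees with the paper's truncated sums $T(S_1),\dots,T(S_5)$. What remains is the collection of two-part and then three-part terms. Your proposal only anticipates this step; the paper also states it tersely, but it does so from the complete list.

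A smaller point concerns the truncation argument. It needs only that $V(G)$ is covered by the three cliques $K_r$, $K_s$, $K_t$, so $\alpha(G)\le3$. The remarks about colorability and $\omega_k$ are beside the point, and $\alpha(G)=3$ is false when $t=1$ (the barbell $P^1(K_r,K_s)$ has $\alpha=2$), though only the upper bound is used.
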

\begin{proof}
Taking $(g,h,k)=(1,0,0)$ in \cref{thm:Y.rst}, we obtain
\begin{multline*}
Y
=
\sum_{I\in \mathcal A^{rs}_{n}\cup\cdots\cup \mathcal A^{rs}_{n-t+1}}
w_I e_{(n-\abs{I},I)}
+
\sum_{I\in \mathcal B^{rs}_{n}\cup\cdots\cup \mathcal B^{rs}_{n-t+1}}
f(I)e_{(n-\abs{I},I)}
+
\sum_{\substack{
aI\vDash n,\ a\ge r+1\\
i_{-1}\le t-1,\
i_{-2}\ge s}}
aw_I e_{(a,I)}
\\
-
\sum_{\substack{
aJ\vDash n,\ a\ge r+1\\ 
j_{-1}\ge s}}
aw_Je_{(a,J)}
-
\sum_{\substack{
abJ\vDash n,\ b\ge r\\ 
j_{-1}\ge s}}
a(b-1)w_Je_{(a,b,J)}
-
\sum_{\substack{
I\!J\vDash n,\ \abs{I}\ge r+1,\ i_{-1}\ge r\\ 
j_{-1}\ge s\\
\ell(I)\ge 3\text{ and } i_{-2}\ge \min(t,\,\abs{I}-r)}}
w_Iw_Je_{(I,J)}.
\end{multline*}
The vertex set of $G$ can be covered by at most three cliques, of sizes
$r$, $s$, and $t-1$, where the last clique is absent when $t=1$.
Consequently, $\alpha(G)\le3$.
Taking $k=1$ in \cref{lem:alpha-omega.e},
if $[e_\lambda]X_G\ne 0$, then
$\ell(\lambda)=\lambda'_1\le\alpha(G)\le3$.

For any symmetric function~$F$, 
we denote by $T(F)$ the \emph{three-part truncation} of $F$, obtained
by keeping only its terms $c_\lambda e_\lambda$ with
$\ell(\lambda)\le 3$.
Since $Y=T(Y)$, we apply $T$ to the preceding identity.
The three-part truncation of the final sum is zero, since every composition
$(I,\,J)$ occurring there has at least four positive parts.
By the definition \cref{def:fI} of $f$,
\[
f(ab)=b-a
\quad\text{and}\quad
f(abc)=(b-a)(c-1).
\]
Denote the sums above as $S_1$, $\dots$, $S_5$.
Then 
\[
Y=T(S_1)+T(S_2)+T(S_3)-T(S_4)-T(S_5),
\]
where
\begin{align*}
T(S_1)
&=\sum_{i=r}^{r+t-1}
i(n-i-1) e_{(i,\,n-i)}
+\sum_{abc\vDash n,\ a\ge r,\ c\ge s}
a(b-1)(c-1) e_{(a,b,c)}
\\
&\quad
+\sum_{i=0}^{t-1}
(n-i) e_{(i,\,n-i)}
+\sum_{i=1}^{t-1}
\sum_{a=r}^{n-i-s}
a(n-i-1-a)e_{(a,\,n-i-a,\,i)},\\
T(S_2)
&=
\sum_{abc\vDash n,\ a+1\le r\le b,\ c\ge s}
(b-a)(c-1)e_{(a,b,c)}
+
\sum_{i=0}^{t-1}
\sum_{a=1}^{\min(r,\,s+t-i)-1}
(n-i-2a)e_{(a,\,i,\,n-i-a)},\\
T(S_3)
&=
\sum_{abc\vDash n,\ 
a\ge r+1,\
b\ge s}
ab(c-1) e_{(a,b,c)},\\
T(S_4)
&=
\sum_{a=r+1}^{r+t-1}
a(n-a)e_{(a,\,n-a)}
+
\sum_{abc\vDash n,\ 
a\ge r+1,\
c\ge s}
ab(c-1)e_{(a,b,c)},\quad\text{and}\\
T(S_5)
&=
\sum_{
abc\vDash n,\ 
b\ge r,\ 
c\ge s}
a(b-1)c
e_{(a,b,c)}.
\end{align*}
Combining the two-part terms first, and then collecting the remaining
three-part terms, gives \cref{prop:Y**0.e}.
\end{proof}

Recall that the graph $S^{100}_{rs1}$ is exactly the barbell $P^1(K_r,K_s)$. 
Taking $t=1$ in \cref{prop:Y**0.e}, we obtain for 
$r\ge s\ge 1$,
\[
Y_{S^{100}_{rs1}}
=r(s-1) e_{(r,s)}+\sum_{b=0}^{\min(r-1,\,s)}(r+s-2b)e_{(r+s-b,\,b)}.
\]
This expression coincides with \cref{thm:barbell} at $(k,a,b)=(1,r,s)$.

We are then able to determine when these graphs are $e$-positive.

\begin{theorem}\label{thm:e+.**0}
Let $r\ge s\ge t\ge1$. 
Let $G\in\{S^{100}_{rst},\,S^{010}_{rst}\}$.
Then $G$ is $e$-positive if and only if (i) $t=1$ or (ii) $t=2$ and $r=s$.
\end{theorem}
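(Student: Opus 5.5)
Since $S^{010}_{rst}\cong S^{100}_{srt}$, both graphs are of the form $S^{100}_{r's't}$ with $\min(r',s')\ge t$. It therefore suffices to treat $Y=Y_{S^{100}_{rst}}$ from \cref{prop:Y**0.e} under the weaker hypothesis $\min(r,s)\ge t$, where $r$ and $s$ may occur in either order. The claim then reads: $Y$ is $e$-positive if and only if $t=1$, or $t=2$ and $r=s$.

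\emph{Sufficiency.} For $t=1$ the graph is the barbell $P^1(K_r,K_s)$, which is $e$-positive by \cref{thm:barbell}. For $t=2$ and $r=s$ I would collect the coefficients of each $e_\lambda$ in \cref{prop:Y**0.e}. The plan is as follows.
\begin{enumerate}
\item Fix $\lambda=(\lambda_1,\lambda_2,\lambda_3)$ with $\ell(\lambda)\le 3$. The negative sum only involves weak compositions $abc$ with $a\ge r+1$ and $b\ge s$, and $c=n-a-b\le t-2=0$. So for $t=2$ the negative terms are exactly the two-part terms $-a\,e_{(a,b)}$ with $a\ge r+1$, $b\ge s$, $a+b=n=2r+1$.
\item The only such term is $(a,b)=(r+1,r)$, contributing $-(r+1)e_{(r+1,r)}$.
\item Compare with the positive contributions to $e_{(r+1,r)}$. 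The first sum gives $abc=(r+1,r,0)$ only if $b\le r-1$, which fails, so it contributes nothing. The middle sum with $c=0$ gives $e_{(r,\,r+1,\,0)}=e_{(r+1,r)}$ with coefficient $r(s+t-2)=r\cdot r$.
\item The net coefficient is therefore $r^2-r-1$. This is positive when $r\ge 2$, which holds since $r\ge t=2$.
\end{enumerate}
All other coefficients are manifestly nonnegative, because $a-b>0$ in the first sum and $c+r(\cdots)\ge0$ in the middle sum.

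\emph{Necessity.} I would exhibit a negative coefficient by choosing $\lambda$ so that the negative sum hits it while the positive sums miss it. I would first try the two-part $\lambda=(n-s,s)$ with $c=0$, taken from $(a,b,c)=(n-s,s,0)$. Its negative contribution is $-(n-s)=-(r+t-1)$, and $a=r+t-1\ge r+1$ once $t\ge2$. On the positive side:
\begin{itemize}
\item The first sum needs a part at least $r$ and another part at most $r-1$. With $c=0$ it contributes when $s\le r-1$; I would need to check orderings of the parts carefully here.
\item The middle sum contributes only when $\{s+t-1-c,\,c,\,r\}$ matches the parts of $\lambda$ as multisets.
\end{itemize}
This single test may not fail in all cases, so I would split by the relative order of $r$ and $s$. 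A better candidate is a three-part $\lambda=(a,b,c)$ with $c\ge1$, $a\ge r+1$, $b\ge s$ and $c\le t-2$, which requires $t\ge3$. There the middle sum cannot contribute unless a part equals $r$, so I choose $a\ge r+1$ and $b\ne r$. The first sum then requires some arrangement with one part at least $r$, one part at most $r-1$, and one at most $t-1$. Here I must keep $b\ge s$ while avoiding a part in $[?,r-1]$, which looks possible by taking $b\ge r$ when $s\le r$. For $t=2$ and $r\ne s$, the computation in step (iv) becomes $r(s)-(\text{negative terms for }(a,b)\in\{(r+1,s)\text{ etc.}\})$, and the imbalance has to be tracked.

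The main obstacle is that, because the sums run over weak compositions, a single $e_\lambda$ gathers several permuted contributions. Some positive contribution from the first sum, with weight $a-b$, could cancel the negative one. I would handle this by tabulating all permutations of the chosen $\lambda$ for each case ($t=2$ with $r\ne s$ in both orders, and $t\ge3$). Where the naive candidate is cancelled, I would fall back to $\lambda=(r+1,\,s+t-2)$-type partitions adjacent to the middle-sum indices and compute the net coefficient explicitly, as in \cref{thm:e-.000}.
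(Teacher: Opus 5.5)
Your reduction to $Y=Y_{S^{100}_{rst}}$ under $\min(r,s)\ge t$ is correct and agrees with the paper, and so is your sufficiency argument. For $t=2$, $r=s$ the only negative term is $-(r+1)e_{(r+1,r)}$, and it is offset by $r^2e_{(r+1,r)}$ from the middle sum.

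The gap is the ``only if'' direction, which you never carry out. You list candidate partitions, doubt them, and defer to tabulating permutations. As written, no negative coefficient has been exhibited for any $t\ge2$ outside case (ii).

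In fact your first candidate settles every case with $r\ne s$, and you only need to compute it. Take $\lambda=(n-s,s)$.
\begin{itemize}
\item The middle sum never contributes. Its $c=0$ term is $e_{(r,\,s+t-1)}$, which equals $e_\lambda$ only when $r=s$, and its $c\ge1$ terms have three positive parts.
\item The first sum contributes only through $(n-s,s,0)$, with weight $n-2s$, and only when $s<r$.
\item The negative sum contributes $-(n-s)$ from $(n-s,s,0)$. It contributes a further $-s$ from $(s,n-s,0)$ when $r<s<n-s$.
\end{itemize}
Hence the coefficient is $-s$ if $s<r$, and $-n$ or $-(n-s)$ if $s>r$.

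Your remark that for $t=2$, $r\ne s$ the count ``becomes $r(s)-\cdots$'' is mistaken for the same reason. The weight $r(s+t-2)$ sits on $e_{(r,\,s+t-1)}$, not on the partition being tested.

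For $r=s$ and $t\ge3$, your three-part candidate does not close the case. With $b\ge s=r$ and $b\ne r$, you need $a,b\ge r+1$ and $c\ge1$ summing to $n=2r+t-1$, which forces $t\ge4$. At $t=3$, dropping $b\ne r$ does not help either. The only three-part partition with $a\ge r+1$, $b\ge r$, $c\ge1$ is then $(r+1,r,1)$. The middle-sum term with $c=1$ lands on it, so its coefficient is $1+r(r-1)-r=(r-1)^2>0$.

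What works uniformly is the two-part partition $(r+t-2,\,r+1)$. The paper uses it, and you mention it only as a vague fallback. Neither positive sum reaches it:
\begin{itemize}
\item The first sum would have to put the padding zero in the middle slot. Its last slot would then hold $r+1$ or $r+t-2$, both exceeding $t-1$.
\item The middle sum needs a part equal to $r$, and there is none.
\end{itemize}
The negative sum gives $-(r+t-2)-(r+1)=-n$ for $t\ge4$. At $t=3$ it gives only $-(r+1)$, because the compositions $(r+t-2,r+1,0)$ and $(r+1,r+t-2,0)$ coincide.

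Until these explicit coefficients are computed, necessity is unproved.
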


\begin{proof}
First we consider $G=S^{100}_{rst}$.
If $t=1$, then $G$ is $e$-positive by \cref{thm:barbell}. Let $t\ge 2$.
If $r=s$ and $t=2$, then $n=2r+1$, and \cref{prop:Y**0.e} gives
\[
Y_G=\sum_{i=0}^{r-1} 
\brk1{(n-2i)e_{(n-i,\,i)}+2(r-i)e_{(2r-i,\,i,\,1)}}
+(r^2-r-1) e_{(r+1,\,r)}
+(r-1)^2 e_{(r,r,1)},
\]
which is $e$-positive. 
If $r=s$ and $t\ge 3$, then $n=2r+t-1$ and
\[
[e_{(r+t-2,\,r+1)}]Y_G
=\begin{dcases*}
-(r+1), & if $t=3$,\\
-n, & if $t\ge 4$;
\end{dcases*}
\]
this coefficient comes from the negative sum on the right side of the formula in \cref{prop:Y**0.e},
and it cannot be canceled by any other term. 
Otherwise, if $r\ge s+1$ and $t\ge 2$, then
\[
[e_{(r+t-1,\,s)}]Y_G=(n-2s)-(r+t-1)=-s<0.
\]
This proves the desired characterization for the $e$-positivity of $G$.

Now consider $G=S^{010}_{rst}\cong S^{100}_{srt}$.
The case $t=1$ has already been handled, 
as has the case $r=s$ and $t=2$.
Suppose that $r>s$ and $t\ge2$.
In view of \cref{prop:Y**0.e}, the only contributions to the coefficient
$[e_{n-r}e_r]Y_{S^{100}_{srt}}$
come from the final negative sum. More precisely,
\[
[e_{n-r}e_r]Y_{S^{100}_{srt}}
=
\begin{dcases*}
-n, & if $n-r>r$,\\
r-n, & if $n-r\le r$.
\end{dcases*}
\]
Indeed, the composition $(n-r,\,r,\,0)$ contributes $r-n$, and when $n-r>r$,
the composition $(r,\,n-r,\,0)$ contributes an additional $-r$.
This coefficient is negative.
\end{proof}

The following result treats both possible orders of the first two clique
sizes and thereby gives a uniform proof for the two graph families.

\begin{theorem}\label{thm:s+.100}
Let $r,s,t$ be integers such that $\min(r,s)\ge t\ge 1$.
Then the graph $S^{100}_{rst}$ is Schur positive.
\end{theorem}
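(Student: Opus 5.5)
The plan is to start from the three-part expansion of $Y=Y_{S^{100}_{rst}}$ in \cref{prop:Y**0.e} and certify Schur positivity with the prefix Hall criterion \cref{cor:s+.Hall.prefix}, as the paper announced at the end of \cref{sec:s+.criteria}. The case $t=1$ is the barbell $P^1(K_r,K_s)$, which is $e$-positive by \cref{thm:barbell}, so I assume $t\ge2$.

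\textbf{Step 1: cancel the negative sum.} The negative sum runs over $abc\vDash_w n$ with $a\ge r+1$ and $b\ge s$. Since $n=r+s+t-1$, this forces $c\le t-2$. The positive sum contains the same triple $(a,b,c)$ with coefficient $a-b$ only when $b\le r-1$. I would split the negative terms into those with $b\le r-1$, which net to
\[
(a-b)-(a-c)=c-b,
\]
and those with $b\ge r$. The first group is negative because $b\ge s\ge t>c$, so cancellation alone does not settle the matter. The deficit $b-c$ has to be supplied from dominated compositions.

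\textbf{Step 2: find dominated supplies.} For a demand $\beta=\rho(a,b,c)$ with $a\ge r+1$ and $b\ge s$, the natural supplies are positive terms $e_{(a',b',c')}$ with larger last part or smaller first part. These include the terms $(a',b',c')$ with $a'\ge r$, $b'\le r-1$, $c'\le t-1$ and coefficient $a'-b'$. They also include the special middle terms $(r,\,s+t-1-c,\,c)$, with coefficient $c+r(s+t-2c-2)$, which is large.

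A key observation is that every negative term has largest part at least $r+1$ and at most $n-s$, while the special terms have largest part $r$ (or $s+t-1-c$), and these are dominated by all demands with $\rho$ having first part $\ge r+1$ and second part ... I would order the supply compositions $\alpha^{(0)},\alpha^{(1)},\dots$ so that each demand's set $\Gamma(\beta)$ is a prefix. The first entries would be the most dominated ones: the special terms $(r,\,s+t-1-c,\,c)$ with $c$ decreasing from $t-1$, followed by the positive terms ordered by first part and then by last part. Because all these compositions have at most three parts and nearly constant first part, dominance reduces to comparing the largest part and then the smallest part. This should make each $\Gamma(\beta)$ an initial segment, possibly after discarding a few comparabilities, which is allowed because we only need sufficiency.

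\textbf{Step 3: verify the prefix inequalities.} I would sum the negative coefficients $(a-c)$, or the net deficits $b-c$, over all demands with $M_\beta\le q$, and compare with cumulative supplies. The dominant supply is the special term $c=0$, namely $e_{(r,s+t-1)}$ with coefficient about $r(s+t-2)$, together with the terms with $c\ge1$. The total demand is a quadratic sum over $a$ from $r+1$ to $r+t-1$ of roughly $a$, which is about $rt$, while the supply is about $rs\ge rt$. I would use $\min(r,s)\ge t$ exactly there. The proof should treat $r\ge s$ and $r<s$ uniformly, since the formula in \cref{prop:Y**0.e} does not depend on the order. This also gives $S^{010}$ via $S^{010}_{rst}\cong S^{100}_{srt}$.

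\textbf{Main obstacle.} I expect the main obstacle to be choosing the ordering so that the sets $\Gamma(\beta)$ really are prefixes, while the cumulative inequalities still hold at every $q$ and not just in total. Demands with $b\ge r$ (possible when $s>r$) compare differently. If the prefix structure fails, my fallback is the full order-ideal criterion \cref{thm:s+.ideal}, checking $\Delta(\mathcal I)\ge0$ on ideals generated by the demands.
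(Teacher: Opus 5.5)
Your framework is the paper's own: the prefix Hall criterion (\cref{cor:s+.Hall.prefix}), with the middle terms $(r,\,s+t-1-c,\,c)$ ordered by decreasing $c$ as the initial segment. However, there is a genuine gap. The two computations that make up the proof are missing, and what you offer in their place is hedged or inaccurate.

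First, the prefix structure. Do not put the first positive sum into the ordering, and do not net it against the demands as in your Step~1. Its coefficients $a-b$ are positive, so it can simply be set aside as an $e$-positive residue. Take as supplies only $\alpha_i=(r,\,s+i,\,t-1-i)$, $0\le i\le t-1$, with capacities $a_i=t-1-i+r(2i+s-t)$. Take as demands the whole negative sum, $\beta_{jk}=(r+j,\,s+k,\,t-1-j-k)$ with $j\ge1$, $k\ge0$, $j+k\le t-1$, and $b_{jk}=r+2j+k+1-t$. Because $\min(r,s)\ge t$, the smallest parts are $t-1-i$ and $t-1-j-k$. With $\delta=r-s$, the largest parts are $s+\max(\delta,i)$ and $s+\max(\delta+j,k)$. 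Since $j\ge1$, this yields $\alpha_i\unlhd\beta_{jk}$ if and only if $i\le M_{jk}=\min\{j+k,\,\max(k,\,\delta+j)\}$. So each $\Gamma(\beta_{jk})$ is exactly a prefix; this is the statement your sentence breaks off before reaching. Incidentally, the largest part of a demand is not bounded by $n-s$: for $s\ge r+2$ the demand $(r+1,\,s+t-2,\,0)$ exceeds it.

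Second, the prefix inequalities cannot be settled by comparing totals, and your totals are off by a factor of $t$. The demand is about $r\binom t2$ and the supply is $\binom t2+rt(s-1)$. The delicate inequalities are at small $q$, where the terms in $r$ can cancel exactly. For instance, for $r=s=t\ge3$ and $q=1$, the supply $a_0+a_1=2r+2t-3$ faces the demand $b_{10}+b_{11}=2r+7-2t$. What is needed is the exact description $D_q=\{(j,k)\colon M_{jk}\le q\}=\{(j,k)\colon 0\le k\le q,\ 1\le j\le m_k\}$ with $m_k=\min\{t-1-k,\,\max(q-k,\,q-\delta)\}$. Split the prefix supply column by column as $\sum_{i\le q}a_i=\sum_{k=0}^{q}\bigl(t-1-k+r(q+s-t)\bigr)$, and compare with the column demand $\sum_{j=1}^{m_k}b_{jk}=m_k(r+m_k+k+2-t)$. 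This gives
\[
\Delta_q=\sum_{k=0}^{q}\Bigl(r(q+s-t-m_k)+(t-1-k-m_k)(m_k+1)\Bigr),
\]
which is nonnegative termwise because $m_k\le t-1-k$ and $m_k\le q+s-t$. The last bound is exactly where $s\ge t$ (for $q-k$) and $r\ge t$ (for $q-\delta$) are used. Until these two steps are carried out, the proposal is a plan rather than a proof. The order-ideal fallback would require the same computation, since the criteria are equivalent for a fixed expansion.
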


\begin{proof}
Let $G=S^{100}_{rst}$ and $n=r+s+t-1$.
If $t=1$, then $G=P^1(K_r,K_s)$, so the result follows from
\cref{thm:barbell}.  Suppose that $t\ge2$.
By \cref{prop:Y**0.e},
\[
Y_G
=
\sum_{\substack{abc\vDash_w n,\ a\ge r\\ b\le r-1,\ c\le t-1}}
(a-b)e_{(a,b,c)}
+P-N,
\]
where
\[
P=\sum_{0\le i\le t-1}a_i e_{\alpha_i}
\quad\text{and}\quad
N=\sum_{j\ge1,\ k\ge0, \ j+k\le t-1}
b_{jk}e_{\beta_{jk}},
\]
with
\begin{align*}
&a_i=t-1-i+r(2i+s-t),
\qquad
\alpha_i=(r,\,s+i,\,t-1-i),
\\
&b_{jk}=r+2j+k+1-t,
\qquad
\beta_{jk}=(r+j,\,s+k,\,t-1-j-k).
\end{align*}
Since the first sum in $Y_G$ is $e$-positive,
it suffices to prove the Schur positivity of $P-N$.

Put $\delta=r-s$.  After rearranging the entries of $\alpha_i$ and
$\beta_{jk}$ into partitions and padding with zeros to length three,
their largest and smallest parts are, respectively,
\[
s+\max(\delta,\,i),
\quad t-1-i,
\qquad\text{and}\qquad
s+\max(\delta+j,\,k),
\quad t-1-j-k.
\]
Since the two partitions have the same size and at most three parts,
the dominance inequalities are equivalent to comparing their largest
parts in the usual direction and their smallest parts in the reverse
direction.  Hence
\[
\alpha_i\unlhd\beta_{jk}
\iff
i\le j+k
\quad\text{and}\quad
\max(\delta,\,i)\le\max(\delta+j,\,k).
\]
Since $j\ge1$, the right side of the second inequality exceeds
$\delta$, so that inequality is equivalent to
$i\le\max(\delta+j,\,k)$.  Consequently,
\[
\alpha_i\unlhd\beta_{jk}
\iff
i\le M_{jk},
\quad\text{where
$M_{jk}=\min\{j+k,\,\max(k,\,\delta+j)\}$}.
\]
In particular, $0\le M_{jk}\le t-1$.  For $0\le q\le t-1$, set
\[
D_q
=
\{(j,\,k):j\ge1,\ k\ge0,\ j+k\le t-1,\ M_{jk}\le q\}.
\]
By \cref{cor:s+.Hall.prefix}, it suffices to prove that
$\Delta_q\ge0$ for every $q$, where
\[
\Delta_q
=
\sum_{i=0}^q a_i-
\sum_{(j,\,k)\in D_q}b_{jk}.
\]
Fix $q$ and define
\[
c_q=q+s-t,
\quad
n_k=t-1-k,
\quad\text{and}\quad
m_k=\min\{n_k,\,\max(q-k,\,q-\delta)\}.
\]
It is routine to check that 
\[
0\le m_k\le \min(n_k,c_q)
\quad\text{and}\quad
D_q
=
\{(j,\,k):0\le k\le q,\ 1\le j\le m_k\}.
\]
On the other hand, 
the positive prefix sum can be distributed among the columns $k$ as
\[
\sum_{i=0}^q a_i
=(q+1)\brk3{t-1-\frac q2+r(q+s-t)}
=\sum_{k=0}^q(n_k+rc_q).
\]
For each fixed $k$, the total negative weight in that column is
\[
\sum_{j=1}^{m_k}b_{jk}
=\sum_{j=1}^{m_k}(r+2j+k+1-t)
=m_k(r+m_k-n_k+1).
\]
It follows that 
\[
\Delta_q
=\sum_{k=0}^q
\brk1{r(c_q-m_k)+(n_k-m_k)(m_k+1)}
\ge 0.
\]
This completes the proof.
\end{proof}

\begin{corollary}\label{cor:s+.010}
For any integers $r\ge s\ge t\ge1$, the graph $S^{010}_{rst}$ is
Schur positive.
\end{corollary}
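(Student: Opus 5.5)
The corollary should follow from \cref{thm:s+.100} by relabeling the graph. By definition, $S^{010}_{rst}=S^{010}(K_r,K_s,K_t)$ is obtained from a common center $c$ as follows:
\begin{itemize}
\item $c$ is a vertex of the clique $K_r$ and of the clique $K_t$, since the paths to these cliques have length $0$;
\item $c$ is joined by a single edge to one vertex of $K_s$.
\end{itemize}
A spider-conjoined graph $S^{l_1l_2l_3}(G_1,G_2,G_3)$ does not depend on the order in which its branches are listed. Exchanging the first two branches therefore gives a graph isomorphism
\[
S^{010}(K_r,K_s,K_t)\cong S^{100}(K_s,K_r,K_t)=S^{100}_{srt}.
\]
This is the same identification used in the proof of \cref{thm:e+.**0}.

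Since chromatic symmetric functions are isomorphism invariants, $X_{S^{010}_{rst}}=X_{S^{100}_{srt}}$. It then suffices to check that \cref{thm:s+.100} applies to the triple $(s,r,t)$ in the roles of its parameters $(r,s,t)$. That theorem requires only that both of its first two parameters be at least its third, i.e.\ $\min(s,r)\ge t\ge1$. This holds under the corollary's hypothesis $r\ge s\ge t\ge1$.

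The reason \cref{thm:s+.100} was stated with the symmetric hypothesis $\min(r,s)\ge t$, rather than $r\ge s$, is that its prefix Hall argument lets $\delta=r-s$ take either sign. After the swap we have $\delta=s-r\le0$. I should confirm that nothing in that proof tacitly used $\delta\ge0$. The key step is the simplification of $\max(\delta,i)\le\max(\delta+j,k)$ to $i\le\max(\delta+j,k)$. It uses only $j\ge1$: in the case $\delta>i$, we have $\delta<\delta+j\le\max(\delta+j,k)$, so both forms of the inequality hold, and in the case $\delta\le i$ they are literally the same. The description of $D_q$ and the bound $m_k\le c_q$ would be rechecked for $\delta<0$ in the same way.

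The corollary then follows from \cref{thm:s+.100} with $(r,s,t)$ replaced by $(s,r,t)$. Apart from that sign check, which only needs confirming, I expect no genuine obstacle.
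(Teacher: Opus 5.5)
Your proposal is correct and is exactly the paper's argument: the isomorphism $S^{010}_{rst}\cong S^{100}_{srt}$, followed by \cref{thm:s+.100} applied to the triple $(s,r,t)$, whose hypothesis $\min(s,r)\ge t\ge1$ holds. The extra check on $\delta=s-r\le0$ is not needed, because \cref{thm:s+.100} is already stated under the symmetric hypothesis $\min(r,s)\ge t$; your verification of that point is nonetheless accurate.
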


\begin{proof}
Since
$S^{010}_{rst}\cong S^{100}_{srt}$, it is Schur positive by
\cref{thm:s+.100}.
\end{proof}

\section{The family
\texorpdfstring{$S_{rst}^{001}$}{S001}}
\label{sec:S001}

Since $S^{001}_{rss}\cong S^{010}_{rss}$, it remains to consider
$s>t$.  We derive a signed $e_I$-expansion for $S^{001}_{rst}$
and use it to determine both its $e$- and Schur positivity.  The Schur
classification is obtained from a common positive kernel together with
an explicit two-column obstruction.

\subsection{\texorpdfstring{Elementary expansion and
$e$-positivity}{Elementary expansion and e-positivity}}
\label{sec:S001.e+}

The derivation of \cref{prop:S001} parallels that of
\cref{prop:Y**0.e}: specializing \cref{thm:Y.rst} is only the first
step, after which \cref{lem:alpha-omega.e} allows us to pass to the
three-part truncation.

\begin{proposition}\label{prop:S001}
Let $r\ge s\ge t\ge 1$, $G=S^{001}_{rst}$, and $n=r+s+t-1$. Then
$Y_G
=\sum_{k=r}^n Y_k e_k$,
where
\[
Y_k
=\sum_{i=n-k-t+1}^{n-k}(k-i)e_{(i,\,n-k-i)}
+t(2k+t-n)e_{(n-t-k,\,t)}
-\sum_{i=0}^{n-k-s}(k-i)e_{(i,\,n-k-i)}.
\]
As a consequence, if $s>t$,
then $[e_{(n-s,\,s)}]X_G\le -s$.
\end{proposition}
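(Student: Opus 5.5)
The plan mirrors the proof of \cref{prop:Y**0.e}. Specializing \cref{thm:Y.rst} to $(g,h,k)=(0,0,1)$ is not allowed directly, since that theorem needs $g\ge1$. So I first relabel the legs: $S^{001}_{rst}\cong S^{100}(K_t,K_r,K_s)$, i.e.\ the graph $S^{100}_{t'r's'}$ with $(t',r',s')=(t,r,s)$ in the roles of $(r,s,t)$. Here $Y_G$ is unchanged because the normalization is symmetric in the three clique sizes. I would rather not use the ordered form of \cref{prop:Y**0.e}, whose hypothesis $\min(r,s)\ge t$ fails after relabeling. Instead I would rederive from \cref{thm:3spider:clique} with $(G,H,m)=(K_t,K_r,s)$, $g=1$, $h=k=0$, or equivalently \cref{thm:Y.rst} with clique sizes $(t,r,s)$. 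Its proof never used any ordering.

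Next I would cut down the expansion. The vertex set is covered by the three cliques $K_r$, $K_s$ (containing the center) and $K_t$. So $\alpha(G)\le3$, and by \cref{lem:alpha-omega.e} only $e_\lambda$ with $\ell(\lambda)\le3$ survive; I apply the three-part truncation $T$ to each sum. Moreover $\chi(G)=r$, so every surviving $\lambda$ has $\lambda_1\ge r$. I will organize the truncated terms by pulling out a part $k\ge r$ and writing each term as $e_k\cdot e_{(i,n-k-i)}$. Compositions with two parts $\ge r$ need care so they are not double counted; I would fix the convention that $k$ is the leading part of the composition as produced by \cref{thm:Y.rst}, and check that the resulting coefficients match $Y_k$ as a polynomial identity in compositions.

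Then I collect terms in each sum. The barbell part gives $\sum_z e_z X_{P^{1+z}(K_t,K_r)}$ with $z\le s-1$, and its three-part pieces come from $w_{(k,i)}=k(i-1)$-type weights. I expect these to telescope, together with the negative $\Sigma_2$-type sum $-\sum w_Iw_Je_{(I,J)}$ with $j_{-1}\ge s$, into the window $\sum_{i=n-k-t+1}^{n-k}(k-i)e_{(i,n-k-i)}$ minus the tail $\sum_{i\le n-k-s}(k-i)e_{(i,n-k-i)}$. The isolated term $t(2k+t-n)e_{(n-t-k,t)}$ should come from the $\Sigma_1$ sum with middle part $x$ together with the $f(I)$-terms, exactly as the term $c+r(s+t-2c-2)$ arose before. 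The main obstacle is this bookkeeping: matching the $f(I)$ contributions and the boundary cases $i\in\{0,t\}$ and $k\in\{r,n-t\}$. I would verify the final formula on small cases, for example $(r,s,t)=(2,2,2)$ and $(3,2,1)$ (a barbell), as a consistency check.

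For the consequence, take $s>t$ and extract $[e_{(n-s,s)}]$, meaning $i=0$ with $\{k,n-k\}=\{n-s,s\}$. Since $n-s=r+t-1\ge r$ and $s\le r$, the part $k$ can be $n-s$, and also $s$ when $s=r$. For $k=n-s$ we have $n-k=s$, and the window needs $i\ge s-t+1>0$, so it contributes nothing. The middle term needs $n-t-k=s-t\ne0$ and $t=0$ or $s$, impossible for a two-part partition $(n-s,s)$ unless $s-t=0$, so it is absent. The tail term at $i=0$ contributes $-(n-s)$. If $s=r$, then $k=s$ gives an additional $i=0$ tail term $-s$, and the window again misses $i=0$. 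Hence $[e_{(n-s,s)}]Y_G\le-(n-s)\le -s$ when $n-s\ge s$. In general $n-s=r+t-1\ge s$, so the bound holds, and multiplying by the positive normalization gives the stated bound for $X_G$.
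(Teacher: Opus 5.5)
Your reduction coincides with the paper's: $S^{001}_{rst}\cong S^{100}_{trs}$, then \cref{thm:Y.rst} with $(g,h,k)=(1,0,0)$ and clique sizes $(t,r,s)$, then the three-part truncation from $\alpha(G)\le3$. After that point, however, you only say what you expect to telescope, and that computation is the entire content of the proposition.

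The bookkeeping convention you fix would also fail as stated. The first entry of a composition in \cref{thm:Y.rst} is not the part that is $\ge r$; for the barbell terms it is $n-\abs{I}\le s-1<r$. The large part is $i_{-1}$, $i_{-2}$ or $j_{-1}$, depending on the sum. Also, after relabeling, the $\Sigma_2$-type sum has $j_{-1}\ge r$, not $j_{-1}\ge s$.

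The paper writes the truncation as nine explicit sums over $ijk\vDash_w n$, with the large part moved to the last slot (via $e_{(i,j,k)}=e_{\rho(ijk)}$, $f(ij)=j-i$, $f(ijk)=(j-i)(k-1)$). It then merges them into five sums, using swaps of $i$ and $j$ and extensions of ranges to $i=0$ or $j=0$. One of these, $-\sum_{j>t,\,k\ge r}j(k-i)e_{(i,j,k)}$, comes from the positive $a\,w_I$ sum and the two pieces of $\Sigma_2$ with $\ell(I)=1$. The isolated term $t(2k+t-n)e_{(n-t-k,\,t)}$ is exactly the correction needed to extend this sum from $j>t$ to $j\ge t$; it is not an $f(I)$ contribution. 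One must then dispose of $\sum_{i,j\ge t,\ i+j=n-k}(j-i)k\,e_{(i,j)}$ before the remaining ranges recombine into the window and the tail. The paper uses $\omega_2(G)\ge r+s-1$ for this; the sum is also antisymmetric under $i\leftrightarrow j$.

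None of these steps appears in your plan, and spot checks on small cases cannot replace them. Incidentally, $S^{001}_{321}$ is not a barbell but the cricket $S^{000}_{322}$.

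The consequence is also derived incorrectly. For $k=n-s$, the window $\sum_{i=s-t+1}^{s}(k-i)e_{(i,\,s-i)}$ contains the endpoint $i=s$. There $e_{(s,0)}=e_s$, so it contributes $k-s=n-2s\ge t-1\ge0$. Hence the contribution of $k=n-s$ is $(n-2s)-(n-s)=-s$, not $-(n-s)$. Likewise, when $r=s$, the window endpoint $i=n-s$ in $Y_s$ contributes $2s-n$, so that index gives $-(n-s)$ rather than $-s$.

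Your intermediate bound $[e_{(n-s,s)}]Y_G\le-(n-s)$ is therefore false whenever $r>s$. For $(r,s,t)=(3,2,1)$ the formula yields $Y_G=5e_5+7e_{(4,1)}-2e_{(3,2)}+2e_{(3,1,1)}$, and $X_G=2Y_G$ is exactly the cricket expansion displayed in \cref{sec:000}. The coefficient is $-2=-s>-3=-(n-s)$.

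Dropping a nonnegative term is not legitimate when proving an upper bound. With the endpoints included, one gets $-s-(n-s)\mathbf 1(r=s)$, or just $-s$ when $n-s=s$ (where the two indices coincide). Either way this does give the stated bound $\le -s$.
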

\begin{proof}
We use the isomorphism $S^{001}_{rst}\cong S^{100}_{trs}$.
We apply \cref{thm:Y.rst} with $(g,h,k)=(1,0,0)$ and for the clique size triple $(t,r,s)$. 
Then the second sum and the fourth sum vanish since they require
\[
\abs{J}\le s-1
\quad\text{and}\quad
j_{-1}\ge s, 
\]
which is impossible. 
Taking $k=1$ in \cref{lem:alpha-omega.e}, 
we have $\ell(\lambda)\le 3$ whenever $[e_\lambda]X_G\ne0$.
By discarding all terms with more than three parts, we obtain
\begin{align*}
Y_G
&=
\sum_{\substack{I\in \mathcal A^{tr}_{n}\cup \mathcal A^{tr}_{n-1}\cup\cdots\cup \mathcal A^{tr}_{n-s+1}\\ \text{$\ell(I)\le 2$, or $\ell(I)=3$ and $I\vDash n$}}}
w_I e_{(n-\abs{I},I)}
+
\sum_{\substack{I\in \mathcal B^{tr}_{n}\cup \mathcal B^{tr}_{n-1}\cup\cdots\cup \mathcal B^{tr}_{n-s+1}\\ \text{$\ell(I)\le 2$, or $\ell(I)=3$ and $I\vDash n$}}}
f(I)e_{(n-\abs{I},I)}
\\
&\qquad+
\sum_{\substack{
abc\vDash n,\ a\ge t+1\\
b\ge r,\ c\ge 1}} aw_{bc} e_{(a,b,c)}
-
\sum_{\substack{
I\!J\vDash n,\ \ell(I\!J)\le 3\\
\abs{I}\ge t+1,\ i_{-1}\ge t,\ j_{-1}\ge r}}
w_Iw_Je_{(I,J)}.
\end{align*}
By \cref{def:fI}, for $i,\,j,\,k\ge 1$, 
\[
f(ij)=j-i
\quad\text{and}\quad
f(ijk)=(j-i)(k-1).
\]
Thus the terms of $Y_G$ can be written more explicitly as
\begin{align}
\label{pf:Y}
Y_G=&
\sum_{\substack{
ij\vDash_w n\\
i<s,\ j\geq r
}}
j e_{(i,j)}
+
\sum_{\substack{
ijk\vDash_w n\\
j\geq t,\ k\geq r
}}
j(k-1)e_{(i,j,k)}
+
\sum_{\substack{
ijk\vDash_w n, \ i\geq t\\
j\ge 1,\  k\geq r
}}
i(j-1)(k-1)e_{(i,j,k)}
\\
\notag
&+
\sum_{\substack{
ijk\vDash_w n, \ i<s\\
1\leq j<t, \ k\ge r
}}
(k-j)e_{(i,j,k)}
+
\sum_{\substack{
ijk\vDash_w n, \ 1\leq i<t\\
j\ge t,\ k\geq r
}}
(j-i)(k-1)e_{(i,j,k)}
+
\sum_{\substack{
ijk\vDash_w n, \ i>t\\
j\geq r,\ k\geq 1
}}
ij(k-1)e_{(i,j,k)}
\\
\notag
&-
\sum_{\substack{
ij\vDash_w n\\
i>t,\ j\geq r
}}
ij e_{(i,j)}
-
\sum_{\substack{
ijk\vDash_w n\\
i>t,\ k\geq r
}}
ij(k-1)e_{(i,j,k)}
-
\sum_{\substack{
ijk\vDash_w n\\
j\geq t,\ k\geq r
}}
i(j-1)k e_{(i,j,k)}.
\end{align}
We repeatedly use $e_{(i,j,k)}=e_{\rho(ijk)}$ to simplify this
expression.

First, the first sum in \cref{pf:Y} can be merged with the fourth as
\[
\sum_{\substack{
ij\vDash_w n\\
i<s,\ j\geq r
}}
j e_{(i,j)}
+
\sum_{\substack{
ijk\vDash_w n\\
i<s,\ 1\leq j<t, \ k\ge r
}}
(k-j)e_{(i,j,k)}
=
\sum_{\substack{
ijk\vDash_w n\\
i<s,\ j<t, \ k\ge r
}}
(k-j)e_{(i,j,k)}
=
\sum_{\substack{
ijk\vDash_w n\\
i<t,\ j<s,\ k\ge r
}}
(k-i)e_{(i,j,k)};
\]
this is done by treating the two-part term as the case $j=0$ in the latter.
Second, by exchanging the indices $i$ and $j$, 
the third sum can be merged with the last negative sum; since
\[
j(i-1)(k-1)
-i(j-1)k
=j(1-i-k)+ik, 
\]
we obtain
\[
\sum_{\substack{
ijk\vDash_w n\\
i\geq t,\ j\ge 1, \ k\geq r
}}
i(j-1)(k-1)e_{(i,j,k)}
-
\sum_{\substack{
ijk\vDash_w n\\
j\geq t,\ k\geq r
}}
i(j-1)k e_{(i,j,k)}
=
\sum_{\substack{
ijk\vDash_w n\\
i\ge 1, \ j\geq t,\ k\geq r
}}
\brk1{i(k-j)-j(k-1)}e_{(i,j,k)}.
\]
Combining this with the fifth sum in \cref{pf:Y} allows $i=0$ in
both sums:
\begin{align*}
&
\sum_{\substack{
ijk\vDash_w n\\
i\ge 1, \ j\geq t,\ k\geq r
}}
\brk1{i(k-j)-j(k-1)}e_{(i,j,k)}
+
\sum_{\substack{
ijk\vDash_w n\\
1\leq i<t, \ j\ge t,\ k\geq r
}}
(j-i)(k-1)e_{(i,j,k)}\\
={}
&\sum_{\substack{
ijk\vDash_w n\\
j\geq t,\ k\geq r
}}
\brk1{i(k-j)-j(k-1)}e_{(i,j,k)}
+
\sum_{\substack{
ijk\vDash_w n\\
i<t, \ j\ge t,\ k\geq r
}}
(j-i)(k-1)e_{(i,j,k)}.
\end{align*}
Third, the last positive sum and the first negative sum in \cref{pf:Y} can be merged as
\[
\sum_{\substack{
ijk\vDash_w n\\
i>t,\ j\geq r,\ k\geq 1
}}
ij(k-1)e_{(i,j,k)}
-
\sum_{\substack{
ij\vDash_w n\\
i>t,\ j\geq r
}}
ij e_{(i,j)}
=
\sum_{\substack{
ijk\vDash_w n\\
i>t,\ j\geq r
}}
ij(k-1)e_{(i,j,k)}
=
\sum_{\substack{
ijk\vDash_w n\\
i>t,\ k\geq r
}}
i(j-1)ke_{(i,j,k)}.
\]
Subtracting the remaining negative sum in \cref{pf:Y} gives
\[
\sum_{\substack{
ijk\vDash_w n\\
i>t,\ k\geq r
}}
i(j-1)ke_{(i,j,k)}
-
\sum_{\substack{
ijk\vDash_w n\\
i>t,\ k\geq r
}}
ij(k-1)e_{(i,j,k)}
=
\sum_{\substack{
ijk\vDash_w n\\
i>t,\ k\geq r
}}
i(j-k)e_{(i,j,k)}
=
-
\sum_{\substack{
ijk\vDash_w n\\
j>t,\ k\geq r
}}
j(k-i)e_{(i,j,k)}.
\]
These transformations give
\begin{align}
\label{pf2:Y}
Y_G=&
\sum_{\substack{
ijk\vDash_w n\\
i<t, \ j<s,\ k\ge r
}}
(k-i)e_{(i,j,k)}
+
\sum_{\substack{
ijk\vDash_w n\\
j\geq t,\ k\geq r
}}
j(k-1)e_{(i,j,k)}
+
\sum_{\substack{
ijk\vDash_w n\\
j\geq t,\ k\geq r
}}
\brk1{i(k-j)-j(k-1)}e_{(i,j,k)}
\\
\notag
&
+
\sum_{\substack{
ijk\vDash_w n\\
i<t, \ j\ge t, \ k\geq r
}}
(j-i)(k-1)e_{(i,j,k)}
-
\sum_{\substack{
ijk\vDash_w n\\
j>t,\ k\geq r
}}
j(k-i)e_{(i,j,k)}.
\end{align}
Now let us further simplify \cref{pf2:Y}.
By adding the $j=t$ case to the last sum
and writing $x=n-t-k$, 
we obtain $Y_G=\sum_{k=r}^n Y_k e_k$,
where  
\begin{align*}
Y_k
&=
\sum_{\substack{
ij\vDash_w n-k\\
i<t, \ j<s
}}
(k-i)e_{(i,j)}
+
\sum_{\substack{
ij\vDash_w n-k\\
i<t, \ j\ge t
}}
(j-i)(k-1)e_{(i,j)}
-
\sum_{\substack{
ij\vDash_w n-k\\
j\geq t
}}
(j-i)ke_{(i,j)}
+
t(k-x)e_{(t,x)}\\
&=
\sum_{\substack{
ij\vDash_w n-k\\
i<t, \ j<s
}}
(k-i)e_{(i,j)}
-
\sum_{\substack{
ij\vDash_w n-k\\
i<t, \ j\ge t
}}
(j-i)e_{(i,j)}
-
\sum_{\substack{
ij\vDash_w n-k\\
i\ge t, \
j\geq t
}}
(j-i)ke_{(i,j)}
+
t(k-x)e_{(t,x)}
\\
&=\sum_{\substack{
ij\vDash_w n-k\\
i<t, \ j<s
}}
(k-j)e_{(i,j)}
-
\sum_{\substack{
ij\vDash_w n-k\\
i<t, \ j\ge s
}}
(j-i)e_{(i,j)}
-
\sum_{\substack{
ij\vDash_w n-k\\
i\ge t, \
j\geq t
}}
(j-i)ke_{(i,j)}
+
t(k-x)e_{(t,x)}.
\end{align*}
Note that $j\ge s$ implies $i<t$, since $i+j=n-k\le s+t-1$.

Suppose that 
$\lambda=(\lambda_1,\lambda_2,\lambda_3)$, where
$\lambda_1\ge \lambda_2\ge \lambda_3\ge 0$. 
Taking $k=2$ in \cref{lem:alpha-omega.e}, we have
\[
\lambda_1+\lambda_2\ge\omega_2(G)\ge r+s-1.
\]
It follows that 
\[
\lambda_3=n-\lambda_1-\lambda_2\le t.
\]
Since $k\ge r$, this allows us to remove the terms for any $e_{(i,j)}$
with both indices larger than $t$.  Therefore, the last sum can be
decomposed into pairs $(i,\,j)$ such that (i) $i=t$ and $j>t$,
(ii) $j=t$ and $i>t$, and (iii) $i=j=t$.  Since the summand is
$(j-i)k$, Case (iii) does not contribute, while the contributions of
Cases (i) and (ii) cancel.

In the remaining sums, initially set $j=n-k-i$ and recall that
$x=n-t-k$.  The first sum has
$\max(0,\,n-k-s+1)\le i\le t-1$, whereas the second has
$0\le i\le n-k-s$.  Moreover,
$n-k-s\le n-r-s=t-1$, and on the latter range
\[
(k-j)-(k-i)=-(j-i).
\]
Combining the two ranges and then indexing the resulting first sum by
its second part, again denoted by $i$, gives
\[
Y_k
=\sum_{i=n-k-t+1}^{n-k}(k-i)e_{(i,\,n-k-i)}
+t(2k+t-n)e_{(n-t-k,\,t)}
-\sum_{i=0}^{n-k-s}(k-i)e_{(i,\,n-k-i)}.
\]
This is the formula in the statement.

At last, recall that $n-s=r+t-1$. It is direct to calculate 
\[
[e_{(n-s,\,s)}]Y_G
=[e_s]Y_{n-s}+[e_{n-s}]Y_s\mathbf{1}(r=s)
=-s-(n-s)\mathbf{1}(r=s)
\le -s
<0.
\]
This proves the non-$e$-positivity of $G$, and completes the proof.
\end{proof}

Now we are able to give the $e$-positivity characterization for the graphs $S^{001}_{rst}$.

\begin{corollary}\label{cor:e+.S001}
Let $r\ge s\ge t\ge1$.  The graph $S^{001}_{rst}$ is $e$-positive
if and only if either $s=t=1$, or $(r,s,t)=(2,2,2)$.
\end{corollary}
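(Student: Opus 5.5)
The plan is to split according to whether $s=t$ or $s>t$, and to reduce each case to results already established.

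\emph{Non-positive cases with $s>t$.} If $s>t$, then \cref{prop:S001} gives $[e_{(n-s,\,s)}]X_G\le -s<0$. Hence $G$ is not $e$-positive, and this covers every triple with $s>t$.

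\emph{Cases with $s=t$.} Here we use the isomorphism $S^{001}_{rss}\cong S^{010}_{rss}$ recorded at the start of this section. The $e$-positivity of $S^{010}_{rst}$ is settled by \cref{thm:e+.**0}: it holds exactly when $t=1$, or when $t=2$ and $r=s$. With $s=t$, the first alternative becomes $s=t=1$. The second becomes $s=t=2$ together with $r=s$, that is, $(r,s,t)=(2,2,2)$.

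\emph{Sufficiency.} If $s=t=1$, then $S^{001}_{r11}$ is a clique $K_r$ with a path of length two attached. This is the lollipop $K_r^2$, which is $e$-positive by \cref{thm:barbell}. Alternatively, it is covered directly by the isomorphism with $S^{010}_{r11}$ and \cref{thm:e+.**0}. For $(2,2,2)$, the isomorphism together with \cref{thm:e+.**0} gives $e$-positivity directly.

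\emph{Main point to check.} The only step needing care is the isomorphism $S^{001}_{rss}\cong S^{010}_{rss}$. In $S^{010}_{rss}$, the leg edge runs from the center to $K_s$. In $S^{001}_{rss}$, it runs from the center to $K_t=K_s$. Since the two clique sizes agree, swapping the roles of these two cliques gives the isomorphism. With that verified, the corollary follows from the case analysis above without any new computation.
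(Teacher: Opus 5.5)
Your proof is correct and follows the paper's argument exactly. The case $s>t$ is ruled out by the negative coefficient from \cref{prop:S001}, and the case $s=t$ reduces via $S^{001}_{rss}\cong S^{010}_{rss}$ to \cref{thm:e+.**0}. One small slip in your sufficiency paragraph: $S^{001}_{r11}$ is $K_r$ with a single pendant edge, i.e.\ the lollipop $K_r^1=P^1(K_r,K_1)$ on $r+1$ vertices, not $K_r^2$. It is still $e$-positive by \cref{thm:barbell}, and your alternative route through \cref{thm:e+.**0} is correct anyway, so the conclusion stands.
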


\begin{proof}
If $s=t$, then $S^{001}_{rss}\cong S^{010}_{rss}$, and the result
follows from \cref{thm:e+.**0}.  If $s>t$, non-$e$-positivity
follows from \cref{prop:S001}.
\end{proof}

\subsection{Common kernel and Schur-positive cases}
\label{sec:S001.s+.positive}

The following theorem classifies the Schur positivity of
$S^{001}_{rst}$.

\begin{theorem}\label{thm:s.001}
Let $r\ge s> t\ge 1$. 
Then the graph $S^{001}_{rst}$ is Schur positive if and only if
one of the following conditions holds:
\begin{enumerate}
\item
$r\ge 2s-1$;
\item
$3t>r+s$;
\item
$(r,s)=(2t-1,\,t+1)$;
\item
$(r,s,t)=(6,6,4)$.
\end{enumerate}
\end{theorem}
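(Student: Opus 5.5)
The proof will start from the three-part expansion of \cref{prop:S001}, $Y_G=\sum_{k=r}^n Y_k e_k$ with $n=r+s+t-1$, and split it into a \emph{common positive kernel} plus a signed remainder. Inside each $Y_k$, the terms $(k-i)e_{(i,\,n-k-i)}$ with $n-k-t+1\le i\le n-k$ and the middle term $t(2k+t-n)e_{(n-t-k,\,t)}$ are nonnegative whenever $k\ge r$ and $2k+t\ge n$. This holds throughout the range, since $2r+t\ge r+s+t-1$. The only negative terms are therefore $-(k-i)e_{(k,\,i,\,n-k-i)}$ for $0\le i\le n-k-s$. Here $k$ ranges over $r\le k\le r+t-1$ and the parts satisfy $n-k-i\ge s$. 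After sorting, these negative terms are indexed by $\beta=\rho(k,i,n-k-i)$.

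For the sufficiency direction I will apply the prefix Hall criterion (\cref{cor:s+.Hall.prefix}) or the order-ideal criterion (\cref{thm:s+.ideal}), as in \cref{thm:s+.100}. For each negative $\beta$, I will compute the set $\Gamma(\beta)$ of positive three-part indices $\alpha\unlhd\beta$. This is governed by comparing largest and smallest parts, because all indices have at most three parts. The natural suppliers are the middle terms $e_{(k,\,n-t-k,\,t)}$ together with the positive terms of $Y_{k'}$ whose smallest part is close to $t$. The four regimes should correspond to different orderings of the parts $k$, $n-k-i\ge s$, and $i\le t-1$:
\begin{itemize}
\item in regime (i), $r\ge 2s-1$, the part $k$ is always the largest, so the chains are nested and a prefix order exists;
\item in regime (ii), $3t>r+s$, all three parts are comparable in size, and the middle coefficient $t(2k+t-n)$ is large enough to absorb the demands;
\item the sporadic cases (iii) and (iv) will be checked by an explicit matrix as in \cref{thm:s+.matrix}, the family (iii) via a parametrized assignment.
\end{itemize}
In each regime I will write the prefix sums $\sum a_\alpha-\sum b_\beta$ column by column in the index $k$ and show that they are nonnegative, as in the computation of $\Delta_q$ in \cref{thm:s+.100}.

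For necessity, suppose none of (i)--(iv) holds, so that $r\le 2s-2$, $3t\le r+s$, and $(r,s,t)$ is not exceptional. I will extract an explicit negative Schur coefficient, the ``two-column obstruction'' mentioned in the section introduction. Concretely, I will take $\lambda$ of the form $(3^x,2^y,1^z)$ with $\ell(\lambda)\ge\chi(G)=r$, allowed by \cref{lem:alpha-omega.s}, and compute $[s_\lambda]Y_G$ from the $e$-expansion via the dual Pieri rule. The simplest candidate is a shape with exactly two columns of length at least two, $\lambda=(2^y,1^{n-2y})$, possibly with one row of length $3$. Then $[s_\lambda]e_\mu$ counts column-strict fillings, and only $e_\mu$ with $\mu$ dominating $\lambda'$ contribute. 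Summing the coefficients of the $Y_k$ over these $\mu$ with these multiplicities gives a closed form, and I expect it to be linear in $r$ with a negative sign exactly outside (i)--(iv). The sporadic exceptions (iii) and (iv) should be exactly the parameter points where the closed form for the best obstruction shape happens to be zero or positive. In those cases the sufficiency argument must succeed instead.

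I expect the main obstacle to be the sufficiency analysis in regime (ii) and in the sporadic cases. There the dominance relations between the sorted negative and positive indices are not nested, so the prefix Hall criterion may fail to apply directly. In that situation I will fall back on checking $\Delta(\mathcal I)\ge0$ for order ideals generated by small sets of negative indices. I will also choose the obstruction shape in the necessity argument so that its failure boundary matches the conditions $r\ge2s-1$ and $3t>r+s$ exactly.
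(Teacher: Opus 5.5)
\textbf{There is a genuine gap in your sufficiency argument.} The dominance-matching criteria are strictly weaker than Schur positivity for this family, and they fail inside regime (ii).

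To see this, first collect the expansion of \cref{prop:S001}. Your claim that the first-sum coefficients $k-i$ are nonnegative is false: $k-i<0$ whenever $i>k$, for example $k=r$, $i=s+t-1$ when $r\le s+t-2$. Such terms cancel only against the term with $k$ and $i$ interchanged.

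After collecting, the negative terms are exactly $e_{(a,b,c)}$ with $a\ge b\ge s$, $a\ge r$ and $c\le t-1$. Their coefficients are $-(b-c)$ if $b<r$, $-(a+b-2c)$ if $a>b\ge r$, and $-(a-c)$ if $a=b\ge r$. Every positive term has middle part at most $s-1$.

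Consequently, for positive $\alpha$ and negative $\beta$, $\alpha\unlhd\beta$ if and only if $\alpha_1\le\beta_1$. So the sets $\Gamma(\beta)$ are always nested, and your worry about non-nestedness is unfounded. The whole criterion collapses to $\Delta(\{\mu:\mu_1\le A\})\ge0$ for $r\le A\le r+t-1$.

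These inequalities do hold in regimes (i) and (iii) and for $(6,6,4)$. For instance, at $A=r$ with $r\ge s+t-1$ the inequality reads $t(r-s+1)\ge ts$, which is exactly $r\ge 2s-1$. They also hold, with equality, in small regime-(ii) cases such as $(4,4,3)$ and $(7,7,5)$.

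They fail in general in regime (ii). Take $(r,s,t)=(28,28,19)$, where $3t=57>56=r+s$. The threshold $A=38$ gives total positive weight $2904$ against total demand $2908$, so $\Delta=-4$. Since $\Delta(\mathcal I)$ depends only on the collected coefficients, \cref{thm:equivalence} shows that no matrix, Hall, or order-ideal certificate exists for any $e_I$-expansion of this $Y_G$. Your fallback to order ideals cannot help, because it is equivalent to Hall, and the heuristic that $t(2k+t-n)$ ``absorbs the demands'' is precisely what breaks.

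What keeps Schur positivity alive there is that $[s_\lambda]e_\mu=K_{\lambda',\mu}$ is not an indicator. At equal largest part, the positive indices are strictly dominated by the negative ones, so they receive larger Kostka weights. For $\lambda'=(38,36)$, the layer-cake decomposition gives $[s_\lambda]Y_G=-4+37+81>0$.

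This is why the paper works at the Schur level. \cref{lem:S001.kernel} rewrites $Y_G$ as a Schur-positive $Q_{rst}$ plus $\sum\Psi(x,y)e_{r+y}s_\mu$. In regime (ii), the Schur coefficients of the latter are computed via Pieri and shown nonnegative directly. Regime (iii) uses the coincidence $e_{2t-1}s_{(1^{2t})}=e_{2t}s_{(1^{2t-1})}$, and $(6,6,4)$ is expanded explicitly.

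Your necessity plan is the paper's two-column obstruction, with three corrections:
\begin{enumerate}
\item $[s_\lambda]e_\mu\ne0$ requires $\mu\unlhd\lambda'$, not the reverse.
\item The decisive choice is $B=\min(s+t-1,\lfloor n/2\rfloor)$.
\item The resulting coefficient equals $-t(2s-r-1)$ when $r\ge s+t-1$. When $r\le s+t-2$ it is a parity-dependent quadratic in $d=r+s-3t$, not linear in $r$.
\end{enumerate}
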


A common kernel controls all four Schur-positive regimes in
\cref{thm:s.001}.

\begin{lemma}[Common kernel expansion]\label{lem:S001.kernel}
Let $r\ge s>t\ge1$ and 
$n=r+s+t-1$. Then
\begin{equation}\label{eq:S001.kernel}
Y_{S^{001}_{rst}}
=
Q_{rst}
+
\sum_{x,y\ge0,\ x+y\le t-1}
\Psi(x,y)e_{r+y}
s_\mu,
\end{equation}
where
\[
Q_{rst}
=
\sum_{\substack{0\le q\le p\le s-1\\m=p+q\le s+t-1}}
\brk4{
t(n-2m+t)\mathbf{1}(q\le t\le p)
+
\sum_{i=\max(q,\,m-t+1)}^{p}
(n-m-i)
}e_{n-m}s_{(2^q,\,1^{p-q})}
\]
is Schur positive, $\mu={(2^{t-1-x-y},\,1^{s-t+1+2x+y})}$, and
\begin{equation}
\label{eq:S001.Psi}
\Psi(x,y)
=
t(r-s+1+2y)
+
y(r-s+y+1)
+
\binom{y}{2}
-
(x+1)(s-t+1+x+y).
\end{equation}
\end{lemma}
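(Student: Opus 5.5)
Throughout, write $Y_G=\sum_{k=r}^nY_ke_k$ as in \cref{prop:S001}. Set $N=n-k$, so $N\le s+t-1$. The plan is to expand each two-part factor $Y_k$ in the Schur basis of degree $N$, and then to split off the positive part $Q_{rst}$. Since $e_ie_{N-i}=\sum_{q\le\min(i,N-i)}s_{(2^q,1^{N-2q})}$ by \cref{prop:Pieri}, every term of $Y_k$ becomes a combination of Schur functions $s_{(2^q,1^{p-q})}$ with $p+q=N$. Concretely, I would sum the coefficients of $e_{(i,N-i)}$ over $i\ge q$ (taking $i\le N-i$ after symmetrizing) to obtain the coefficient of $e_ks_{(2^q,1^{N-2q})}$. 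Writing $m=N$ and $p=N-q$, three contributions appear:
\begin{itemize}
\item the first sum gives $\sum_i(n-m-i)$ over $i$ from $\max(q,m-t+1)$ to $p$, which is exactly the inner sum in $Q_{rst}$;
\item the term $t(2k+t-n)e_{(m-t,t)}$ contributes $t(n-2m+t)$ to every $q\le\min(t,m-t)$;
\item the negative sum contributes $-(k-i)$ over $i\le m-s$.
\end{itemize}

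Next I would regroup these coefficients into $Q_{rst}$ plus a correction. The indicator $\mathbf 1(q\le t\le p)$ in $Q_{rst}$ records the range where the middle term is kept intact, and $Q_{rst}$ is Schur positive because all its coefficients are nonnegative. Indeed $n-m-i\ge k-p\ge r-(s-1)>0$, $n-2m+t\ge0$ in the relevant range, and each $e_{n-m}s_\lambda$ is Schur positive by \cref{prop:Pieri}. The remaining pieces come from three places: the negative sum (nonzero only when $m\ge s$), the middle term outside the window $q\le t\le p$, and the terms with $p\ge s$ excluded from $Q_{rst}$. These occur only for large $m$, i.e.\ $k=r+y$ with $y$ between $0$ and $t-1$. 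The associated shapes should have $q=t-1-x-y$ twos, so that $\mu=(2^{t-1-x-y},1^{s-t+1+2x+y})$ has size $s+t-1-y=n-k$. I would check the size bookkeeping first, and then match each leftover coefficient at the pair $(k,\mu)$ to $\Psi(x,y)$.

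The main obstacle is this last identification. To verify it, I would compute the leftover coefficient as a closed form in $x,y$ by evaluating the arithmetic sums explicitly, and compare it with \cref{eq:S001.Psi}. I expect the four summands of $\Psi$ to arise as follows:
\begin{itemize}
\item $t(r-s+1+2y)$ from the middle term $t(2k+t-n)$ with $k=r+y$, since $2k+t-n=r-s+1+2y$;
\item $y(r-s+y+1)+\binom y2$ from the partial first sum over the $y$ extra indices;
\item $-(x+1)(s-t+1+x+y)$ from the $x+1$ negative terms $-(k-i)$ with $i\le m-s$.
\end{itemize}
Care is needed to avoid double counting at the boundary $q=t$ versus $q\le t-1$, and for the $p\ge s$ cutoff. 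I would sanity-check the formula numerically on small cases such as $(r,s,t)=(3,2,1)$ and $(4,3,2)$, using the $e$-expansion of \cref{prop:S001}.
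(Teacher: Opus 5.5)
Your route is the paper's. You expand each $e_{(i,m-i)}$ in $Y_k$ (with $m=n-k$) by the vertical Pieri rule, collect the coefficient of $e_{n-m}s_{(2^q,1^{p-q})}$, take $Q_{rst}$ to be the part with $p\le s-1$, and reindex the rest by $x=p-s$ and $y=k-r$. Your first list of three contributions is correct, and so is your positivity argument for $Q_{rst}$. The bookkeeping you predict for the second half is wrong in places, though, and matching $\Psi$ term by term as you describe would fail.

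\emph{The window and the negative sum.} There is no ``middle term outside the window'': the Pieri support of $e_{(m-t,t)}$ is exactly $q\le\min(t,m-t)$, i.e.\ $q\le t\le p$. What you do need, and do not state, is that the negative sum reaches $s_{(2^q,1^{p-q})}$ only when $p\ge s$; knowing it is nonzero only for $m\ge s$ is not enough. Its index satisfies $q\le i\le\min(p,m-s)$. Since $2q\le m\le s+t-1\le 2s-2$, we get $m-s<p$, so the range is $q\le i\le m-s$, which is nonempty if and only if $p\ge s$. This is what makes $Q_{rst}$ exactly the $p\le s-1$ part.

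\emph{The missing pairing.} For $p\ge s$ you have $m-t+1=s-y>q$. So the first sum runs over all $x+y+1$ indices $i=s-y,\dots,s+x$, not only $y$ of them. The $x+1$ negative terms $i=q,\dots,m-s$ by themselves give $-(x+1)(r+2y-t+1)-\binom{x+1}{2}$, not $-(x+1)(s-t+1+x+y)$. The step you need is the paper's pairing. After substituting $a=m-i$, the negative sum becomes $-\sum_{a=s}^{p}(n-2m+a)$. It combines with the first-sum terms $\sum_{a=s}^{p}(n-m-a)$ into $-\sum_{a=s}^{p}(2a-m)=-\sum_{c=0}^{x}(s-t+1+y+2c)=-(x+1)(s-t+1+x+y)$. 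Only the first-sum terms $a=s-y,\dots,s-1$ produce $y(r-s+y+1)+\binom{y}{2}$. For the middle term, $q\le t-1$ and $p\ge s>t$ force $\mathbf{1}(q\le t\le p)=1$, so it contributes $t(n-2m+t)=t(r-s+1+2y)$. With this split, the leftover coefficient is exactly $\Psi(x,y)$.
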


\begin{proof}
Let $G=S_{rst}^{001}$.  
For the index $k$ in \cref{prop:S001} which satisfies $r\le k\le n$,
we introduce its complementary index $m=n-k$. 
Then
$Y_G
=
\sum_{m=0}^{s+t-1}(S_1+S_2+S_3)e_{n-m}$,
where
\[
S_1
=
\sum_{j=0}^{t-1}(n-2m+j)e_{(j,\,m-j)}, 
\quad
S_2
=
t(n-2m+t)e_{(t,\,m-t)}, 
\quad\text{and}\
S_3
=
-\sum_{i=0}^{m-s}(n-m-i)e_{(i,\,m-i)}.
\]
By the vertical Pieri rule in \cref{prop:Pieri},
\begin{equation*}
e_{(u,\,m-u)}
=
e_us_{(1^{m-u})}
=
\sum_{q=0}^{\min(u,\,m-u)}
s_{(2^q,\,1^{m-2q})}.
\end{equation*}
We apply this rule to each of the three terms $S_i$.
Note that the outer index $m$ gives some pairs $(p, q)$ such that $p+q=m$, since $0\le q\le m-u\le m$; here the symbol $u$ plays the roles of $j$, $t$ and $i$ in the three terms, respectively. 
Thus the triple sum may be computed by fixing a pair
$(p,q)\in\mathbb Z_{\ge 0}^2$ such that $p+q\le s+t-1$.
Then the restriction $q\le\min(u, \,m-u)$ becomes $q\le u\le p$, 
and in turn the pair $(p, q)$ runs over all lattice points in the triangular area bounded by
\[
q=0, \quad
q=p, \quad\text{and}\quad
p+q=s+t-1. 
\]

For any fixed pair $(p,q)$ in this area, the three terms become
\begin{align*}
S_1
&=
\sum_{j=0}^{t-1}
\sum_{q=0}^{\min(j,\,m-j)}
(n-2m+j)
s_{(2^q,\,1^{m-2q})}
=
\sum_{j=q}^{\min(p, \,t-1)}
(n-2m+j)
s_{(2^q,\,1^{p-q})}, 
\\
S_2
&=
\sum_{q=0}^{\min(t,\,m-t)}
t(n-2m+t)
s_{(2^q,\,1^{p-q})}
=
t(n-2m+t)
\mathbf{1}(q\le t\le p)
s_{(2^q,\,1^{p-q})}, 
\quad\text{and}\\
S_3
&=
-
\sum_{i=q}^{\min(p, \,m-s)}
(n-m-i)
s_{(2^q,\,1^{p-q})}.
\end{align*}
We claim that the upper bound $i\le p$ in $S_3$ is redundant.  
In fact, 
\[
2q\le p+q=m\le s+t-1\le 2s-2.
\]
Hence $q\le s-1$, and $m-s\le m-q-1<p$.
This proves the claim.
Combining these expressions, we obtain
\[
Y_G
=
\sum_{0\le q\le p, \ p+q\le s+t-1}
C(p,q)e_{n-p-q}s_{(2^q,\,1^{p-q})},
\]
where 
\[
C(p,q)
=
\sum_{j=q}^{\min(p, \,t-1)}(n-2m+j)
+
t(n-2m+t)\mathbf{1}(q\le t\le p)
-
\sum_{i=q}^{m-s}(n-m-i).
\]
Substituting $a=m-j$ and $a=m-i$ in the two sums, respectively, gives
\[
\sum_{j=q}^{\min(t-1,\,p)}(n-2m+j)
-\sum_{i=q}^{m-s}(n-m-i)
=
\sum_{a=\max(q,\,m-t+1)}^{p}(n-m-a)
-
\sum_{a=s}^{p}(n-2m+a).
\]
The bounds already obtained give $\max(q,\,m-t+1)\le s$.  Therefore, 
\[
C(p,q)
=
t(n-2m+t)\mathbf{1}(q\le t\le p)
+
\sum_{a=\max(q,\,m-t+1)}^{\min(p,\,s-1)}
(n-m-a)
-
\sum_{a=s}^{p}(2a-m).
\]
Recall that every term except those in the last  sum is nonnegative:
\begin{align*}
n-2m+t
&\ge
n+t-2(s+t-1)
=
r-s+1
>
0, \quad\text{and}\\
n-m-a
&\ge r-(s-1)>0.
\end{align*}
Since the last sum is empty when $p<s$, we find
\[
Q_{rst}
=
\sum_{p<s}\sum_{\substack{0\le q\le p\\m=p+q\le s+t-1}}
C(p, q)e_{n-m}s_{(2^q,\,1^{p-q})},
\]
and it is Schur positive by \cref{prop:Pieri}.
Moreover, 
\[
Y_G-Q_{rst}
=\sum_{\substack{0\le q\le p, \ p\ge s\\ m=p+q\le s+t-1}}
C(p, q)e_{n-m}s_{(2^q,\,1^{p-q})}, 
\]
where $C(p, q)$ is reduced under the condition $p\ge s$ as
\[
C(p,q)
=
t(n-2m+t)
+
\sum_{a=m-t+1}^{s-1}
(n-m-a)
-
\sum_{a=s}^{p}(2a-m).
\]
In order to make the summation indices starting from $0$, we introduce 
\[
x=p-s
\quad\text{and}\quad
y=n-m-r.
\]
Then $x,y\ge0$ and
\[
x+y=n-q-r-s=t-1-q\le t-1.
\]
It is then routine to simplify 
\[
\sum_{\substack{0\le q\le p, \ p\ge s\\ m=p+q\le s+t-1}}
C(p, q)e_{n-m}s_{(2^q,\,1^{p-q})}
=
\sum_{\substack{x, y\ge 0\\ x+y\le t-1}}
\Psi(x,y)e_{r+y}
s_{(2^{t-1-x-y},\,1^{s-t+1+2x+y})},
\]
as desired. This completes the proof.
\end{proof}

%
%
%

The common kernel yields the positive cases in the order in which they
appear in \cref{thm:s.001}.

\begin{proof}[Proof of the ``if'' part of \cref{thm:s.001}]
Let $G=S^{001}_{rst}$ and $n=r+s+t-1$. Let $Q_{rst}$ and
$\Psi$ be as in \cref{lem:S001.kernel}.
It is routine to check that, for $x,y\ge0$ with $x+y\le t-1$,
\begin{align}
\label{eq:S001.Psi.dx}
\Psi(x+1,\,y)-\Psi(x,y)
&=t-s-2x-y-3<0,
\\
\label{eq:S001.Psi.dy}
\Psi(x,\,y+1)-\Psi(x,y)
&=r-s+2t-x+3y+1>0.
\end{align}
\noindent\emph{Case 1. Suppose that $r\ge 2s-1$.}
The monotonicity above gives
\[
\Psi(x,y)\ge \Psi(t-1,\,0)=t(r-2s+1)\ge 0.
\]
Therefore,
$Y_G$ is Schur positive by \cref{lem:S001.kernel}.

\noindent\emph{Case 2. Suppose that $3t>r+s$.}
Set
\[
C_\lambda=[s_\lambda](Y_G-Q_{rst}).
\]
By \cref{prop:Pieri}, every Schur function
occurring in $Y_G-Q_{rst}$, or equivalently in $e_{r+y}s_\mu$, has first part at most three.  Fix
$\lambda\vdash n$ with $\lambda_1\le3$, and write
\[
\lambda'=(a,\,b,\,c),
\qquad
a\ge b\ge c\ge0.
\]
For a term in \cref{eq:S001.kernel} indexed by $(x,y)$, put
$z=x+y$. 
Then
$\mu'=(s+x,\,t-1-z)$.
By \cref{prop:Pieri}, 
\[
[s_\lambda]e_{r+y}s_\mu
=\mathbf{1}(\text{$\lambda'/\mu'$ is a horizontal strip})
=\mathbf{1}(a\ge s+x\ge b\ge t-1-z\ge c).
\]
By \cref{lem:S001.kernel}, 
\begin{equation}\label{eq:S001.Psi.Pieri}
C_\lambda
=
\sum_{z=\max(0,\,t-1-b)}^{t-1-c}
\sum_{x=\max(0,\,b-s)}^{\min(a-s,\,z)}
\Psi(x,\,z-x).
\end{equation}

Let $\alpha=r-s$ and $\delta=3t-r-s-1$.
Then $\alpha,\delta\ge0$.

\noindent\emph{Case 2-1. If $b\le r$,}
then
\[
a-s=t-1-c+(r-b)\ge z.
\]
Set $d=\max(0,\,b-s)$.
If $z<d$, then the inner sum is empty. Suppose that $z\ge d$, and set
$\Delta=z-d$. Then $\Delta\ge 0$.
A direct summation gives
\[
\sum_{x=d}^{z}\Psi(x,\,z-x)
=
\frac{\Delta+1}{2}
\brk2{
(s-t)(4h+2d+3\Delta+2)
+2(\alpha+1+\Delta)\delta
+h(2h+4d+3\Delta+4)
}, 
\]
where $h=\alpha-d\ge 0$.
Thus the sum above is nonnegative and so is 
\cref{eq:S001.Psi.Pieri}.

\noindent\emph{Case 2-2. If $b>r$,}
then we put $B=b-r$ and $u=a-b$.
Then $B\ge1$ and $u\ge0$.  Moreover,
\[
b-s=\alpha+B,
\qquad
a-s=\alpha+B+u,
\qquad
t-1-c=\alpha+2B+u.
\]
The two lower bounds in \cref{eq:S001.Psi.Pieri} therefore reduce
to $0$ and $\alpha+B$, respectively.  The inner sum is empty when
$z<\alpha+B$.  Setting
$i=x-(\alpha+B)$
and
$y=z-x$
recasts \cref{eq:S001.Psi.Pieri} as
\[
C_\lambda
=
\sum_{i=0}^{u}
\sum_{y=0}^{B+u-i}
\Psi(\alpha+B+i,\,y).
\]
A direct Maple evaluation of the double sum, followed by
simplification and factorization, gives
\[
C_\lambda
=
\frac{u+1}{12}
\brk2{
3(c+\delta)
\brk1{\alpha(2B+u+2)+2B(B+u+2)}
+(u+2)\brk1{3(c+tu)+\delta(u+3)}
}\ge 0.
\]
Here we used $t=2B+\alpha+c+u+1$.

Thus $C_\lambda\ge0$ in both subcases.  Since $Q_{rst}$ is Schur
positive, so is $Y_G$, proving Case~2.

\noindent\emph{Case 3. Suppose that $(r,s)=(2t-1,\,t+1)$.}
Then $t\ge2$.  For $y=0$, \cref{eq:S001.Psi} becomes
\[
\Psi(x,0)=t(t-1)-(x+1)(x+2).
\]
It follows that
\[
\Psi(t-2,\,0)=0
\quad\text{and}\quad
\Psi(t-1,\,0)=-2t.
\]
If $y\ge1$, then $x\le t-2$, so the monotonicity in
\cref{eq:S001.Psi.dx,eq:S001.Psi.dy} gives
\[
\Psi(x,y)\ge\Psi(t-2,1)=2t+1.
\]
Thus the only negative coefficient in \cref{eq:S001.kernel} is
indexed by $(t-1,\,0)$.  The two factors indexed by
$(t-1,\,0)$ and $(t-2,\,1)$ coincide since
\[
e_{2t-1}s_{(1^{2t})}
=
e_{2t-1}e_{2t}
=
e_{2t}s_{(1^{2t-1})}.
\]
Their combined coefficient is
\[
\Psi(t-1,\,0)+\Psi(t-2,\,1)
=
-2t+(2t+1)
=
1.
\]
All remaining terms are Schur positive, proving Case~3.

\noindent\emph{Case 4. Suppose that $(r,s,t)=(6,6,4)$.}
By \cref{prop:S001,prop:Pieri}, one may compute that
\[
\begin{aligned}
Y_G
={}&
4s_{(3^{4},2,1)}
+16s_{(3^{4},1^{3})}
+s_{(3^{3},2^{3})}
+19s_{(3^{3},2^{2},1^{2})}
+55s_{(3^{3},2,1^{4})}
+45s_{(3^{3},1^{6})}
+7s_{(3^{2},2^{4},1)}\\
&+52s_{(3^{2},2^{3},1^{3})}
+127s_{(3^{2},2^{2},1^{5})}
+132s_{(3^{2},2,1^{7})}
+96s_{(3^{2},1^{9})}
+22s_{(3,2^{5},1^{2})}
+110s_{(3,2^{4},1^{4})}\\
&+242s_{(3,2^{3},1^{6})}
+274s_{(3,2^{2},1^{8})}
+259s_{(3,2,1^{10})}
+175s_{(3,1^{12})}
+50s_{(2^{6},1^{3})}
+200s_{(2^{5},1^{5})}\\
&+410s_{(2^{4},1^{7})}
+484s_{(2^{3},1^{9})}
+505s_{(2^{2},1^{11})}
+448s_{(2,1^{13})}
+288s_{(1^{15})},
\end{aligned}
\]
which is Schur positive.  This proves Case~4 and completes the proof.
\end{proof}

\subsection{Two-column obstruction and failure of Schur positivity}
\label{sec:S001.s+.negative}

For the remainder of \cref{sec:S001}, let $G=S^{001}_{rst}$ and
$n=r+s+t-1$, and set
\[
\lambda=(2^{B},1^{n-2B})
\quad\text{and}\quad
c_\lambda
=
\brk[s]1{s_\lambda}Y_G,
\]
where
\begin{equation}\creflabel[def]{def:B.001}
B=\min\brk3{s+t-1,\ \floor*{\frac n2}}
=\begin{dcases*}
\floor*{\frac n2}, & if $r\le s+t-1$,\\
s+t-1, & if $r\ge s+t-1$.
\end{dcases*}
\end{equation}
For triples $(r,s,t)$ in the negativity regime of \cref{thm:s.001},
it suffices to show that $c_\lambda<0$.

\begin{lemma}\label{lem:two-column.e-to-s}
Let $n\ge 1$ and $\lambda=(2^B,1^{n-2B})\vdash n$.
Let $\mu$ be a weak composition of $n$ with maximum part $\max(\mu)=m$ and length at most $3$. Pad $\mu$ with trailing zeros to length three.
Then
\begin{equation}\label{eq:two-column.kappa.simple}
\brk[s]1{s_\lambda}e_\mu
=
\brk3{
B+1
-\sum_{i=1}^3\max(B-\mu_i,\,0)
}\mathbf{1}(m\le n-B).
\end{equation}
In particular, the following specializations hold.
\begin{enumerate}
\item
If $\mu$ has a part $n-B$, then
$\brk[s]1{s_\lambda}e_\mu=1$.
\item
If $n-2B=0$, then
$\brk[s]1{s_\lambda}e_\mu=\mathbf{1}(m\le n-B)$.
\item
If $n-2B=1$, then
\[
\brk[s]1{s_\lambda}e_\mu
=
\begin{dcases*}
2, & if $m<n-B$,\\
1, & if $m=n-B$,\\
0, & if $m>n-B$.
\end{dcases*}
\]
\end{enumerate}
\end{lemma}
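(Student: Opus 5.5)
We compute $[s_\lambda]e_\mu$ by applying the vertical-strip Pieri rule (\cref{prop:Pieri}) three times, starting from $s_\varnothing$ and multiplying successively by $e_{\mu_1}$, $e_{\mu_2}$, $e_{\mu_3}$. Since $e_\mu$ is symmetric in its factors, we may reorder the parts and assume $\mu_1\ge\mu_2\ge\mu_3\ge 0$, so $m=\mu_1$. Every Schur function that appears has at most two columns. It is therefore convenient to record each intermediate shape by its conjugate, a pair $(P,Q)$ with $P\ge Q\ge 0$, where $P$ and $Q$ are the two column lengths. Adding a vertical strip of size $d$ to $(P,Q)$ produces $(P',Q')$ with $P'\ge P$, $Q\le Q'\le P$, and $P'+Q'=P+Q+d$; the condition $Q'\le P$ holds because two added cells in the same row would violate the vertical-strip condition. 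The target is $(P,Q)=(n-B,\,B)$, which is a valid pair since $B\le\floor{n/2}$.

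Multiplying by $e_{\mu_1}$ gives the single column $(\mu_1,0)$. Multiplying by $e_{\mu_2}$ gives the shapes $(\mu_1+\mu_2-j,\,j)$ with $0\le j\le\min(\mu_2,\mu_1)=\mu_2$. For the last step, the passage from $(\mu_1+\mu_2-j,\,j)$ to $(n-B,\,B)$ is a vertical strip of size $\mu_3$ precisely when
\[
\mu_1+\mu_2-j\le n-B
\quad\text{and}\quad
j\le B\le\mu_1+\mu_2-j.
\]
The size condition holds automatically because $|\mu|=n$. Substituting $\mu_1+\mu_2=n-\mu_3$, these conditions say that $j$ lies in the interval
\[
\max(0,\;B-\mu_3)\;\le\; j\;\le\;\min(\mu_2,\;B,\;n-\mu_3-B).
\]
Hence $[s_\lambda]e_\mu$ equals the number of integers in this interval.

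The main step is to show that this count equals $B+1-\sum_i\max(B-\mu_i,0)$ when $m\le n-B$, and $0$ otherwise. The condition $m\le n-B$ is necessary in any case, because the first column of $\lambda$ must contain the column of length $\mu_1$. We then split into cases according to how many of the $\mu_i$ are less than $B$.

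The facts $\mu_1\le n-B$ and $B\le n-B$ organize the case analysis. Using $n-\mu_3-B=\mu_1+\mu_2-B$, the upper bound of the interval becomes $\min(\mu_2,\,B,\,\mu_1+\mu_2-B)$. Since $\mu_1\ge B$ would make the third term at least $\mu_2$, we get:
\begin{enumerate}
\item If $\mu_1\ge B$, the upper bound is $\min(\mu_2,B)=B-\max(B-\mu_2,0)$.
\item If $\mu_1<B$, then $\mu_2\le\mu_1<B$, and the upper bound becomes $\mu_1+\mu_2-B$.
\end{enumerate}
In either case, adding $1$ and subtracting the lower bound $\max(0,B-\mu_3)$ gives exactly $B+1-\sum_i\max(B-\mu_i,0)$. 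In the second case this is because $\mu_1+\mu_2-B=B-(B-\mu_1)-(B-\mu_2)$. We also need to check that this expression is nonnegative whenever $m\le n-B$, so that it counts the interval correctly rather than giving a negative value. This follows because $\mu_1+\mu_2-B=n-\mu_3-B\ge B-\mu_3$ when $\mu_3\le n-2B$; when $\mu_3>n-2B$, one verifies the bound using $\mu_1\ge\mu_3$. This nonnegativity bookkeeping is the only delicate point.

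The three specializations then follow directly from \eqref{eq:two-column.kappa.simple}.
\begin{enumerate}
\item If some part equals $n-B$, then $m=n-B$ and the remaining parts sum to $B$. Hence at most one part is less than $B$, and the deficits sum to $B$ minus the parts other than the largest, which equals $B$. The formula gives $B+1-B=1$.
\item If $n=2B$, the parts are at most $B$ and sum to $2B$. The deficits then sum to exactly $3B-2B=B$, so the value is $\mathbf{1}(m\le n-B)$.
\item If $n=2B+1$, the deficits sum to $B-1$ when all parts are at most $B$, which means $m<n-B$, giving the value $2$. They sum to $B$ when $m=B+1=n-B$, giving the value $1$. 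When $m>n-B$ the value is $0$ by the indicator.
\end{enumerate}
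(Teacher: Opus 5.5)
Your argument is correct, but it takes a different route from the paper.

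\textbf{The paper's route.} It applies $\omega$ to turn the coefficient into the Kostka number $K_{(n-B,B),\mu}$. It then expands $s_{(n-B,B)}=h_{(n-B,B)}-h_{(n-B+1,B-1)}$ by the two-row Jacobi--Trudi identity and identifies the answer with the coefficient of $z^B$ in $\prod_{i=1}^3(1-z^{\mu_i+1})/(1-z)^2$. Inclusion--exclusion then produces the terms $\max(B-\mu_i,0)$ directly, and the condition $m\le n-B$ is exactly what kills the pairwise and triple correction terms.

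\textbf{Your route.} You iterate the vertical Pieri rule, as in the proof of \cref{lem:S001.kernel}. This reduces the coefficient to the number of admissible second-column lengths $j$ with $\max(0,B-\mu_3)\le j\le\min(\mu_2,B,n-\mu_3-B)$. It is more elementary, and it shows transparently that the coefficient is a genuine count; in fact it is at least $1$ whenever $m\le n-B$. The price is that you must sort $\mu$, split into the cases $\mu_1\ge B$ and $\mu_1<B$ to convert the interval length into the symmetric $\max$-form, and check that the interval is nonempty. The generating-function computation handles all of this uniformly.

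A few local statements need repair, though none affects the conclusion.

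\emph{The nonnegativity check is muddled.} The inequality $n-\mu_3-B\ge B-\mu_3$ is simply $n\ge 2B$, so the split at $\mu_3\le n-2B$ is unnecessary. The inequality that actually matters in the case $\mu_1\ge B$ is $\mu_2\ge B-\mu_3$, i.e.\ $\mu_2+\mu_3=n-\mu_1\ge B$. This is precisely the hypothesis $m\le n-B$, not $\mu_1\ge\mu_3$. In the case $\mu_1<B$ the count is $n-2B+1\ge1$ outright.

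\emph{Specialization~(1).} The assertion that at most one part is less than $B$ is false: take $n=7$, $B=3$, $\mu=(4,2,1)$. What you need is that the part $n-B\ge B$ has deficit $0$. The other two parts sum to $B$, so each is at most $B$ and their deficits total $2B-B=B$.

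\emph{Specialization~(2).} The bound $\mu_i\le B$ holds only under $m\le n-B=B$.

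\emph{Two-column shapes.} The product $e_{\mu_1}e_{\mu_2}e_{\mu_3}$ can contain three-column shapes. Only the intermediate shapes contained in $\lambda$ are forced to have at most two columns.
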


\begin{proof}
Let $L=n-B$. By \cref{def:omega} of $\omega$,
\[
\brk[s]1{s_{\lambda}}e_\mu
=
\brk[s]1{s_{\lambda'}}h_\mu
=
K_{\lambda',\mu}
=
[m_\mu]s_{\lambda'},
\]
where $K$ denotes the Kostka number,  
and $\lambda'=(L,B)$ is the conjugate of $\lambda$.
By the combinatorial interpretation of Kostka numbers, we have
$K_{\lambda',\mu}=0$ if $m>L$.
This proves the vanishing assertion.

Suppose that $m\le L$.
The two-row Jacobi--Trudi identity gives
\[
s_{\lambda'}
=s_{(L,B)}
=h_{(L,B)}-h_{(L+1,\,B-1)}.
\]
By the combinatorial interpretation of the $h$-to-$m$ transition,
\[
\brk[s]1{m_\mu}h_{(L,B)}
=
N_B
\quad\text{and}\quad
\brk[s]1{m_\mu}h_{(L+1,\,B-1)}
=
N_{B-1},
\]
where 
\[
N_q
=\#\brk[c]1{(\alpha_1,\alpha_2,\alpha_3)\in\mathbb Z_{\ge 0}^3\colon
\alpha_1+\alpha_2+\alpha_3=q,\ \alpha_i\le \mu_i\ \text{for all $i\in[3]$}}.
\]
Therefore,
\[
\brk[s]1{s_{\lambda}}e_\mu
=
N_B
-
N_{B-1}.
\]
The generating function for $N_q$ is
\[
\sum_{q\ge0}N_qz^q
=
\prod_{i=1}^3(1+z+\dots+z^{\mu_i})
=
\frac{W}{(1-z)^3},
\quad
\text{where $W=\prod_{i=1}^3(1-z^{\mu_i+1})$.}
\]
Hence
\begin{align*}
\brk[s]1{s_{\lambda}}e_\mu
&=\brk[s]1{z^B}
\frac{W}{(1-z)^2}(1+z+z^2+\dotsm)
-
\brk[s]1{z^{B-1}}
\frac{W}{(1-z)^2}(1+z+z^2+\dotsm)
\\
&=
\sum_{i=0}^{B}
\brk[s]1{z^i}
\frac{W}{(1-z)^2}
-
\sum_{i=0}^{B-1}
\brk[s]1{z^i}
\frac{W}{(1-z)^2}
=
\brk[s]1{z^B}
\frac{(1-z^{\mu_1+1})(1-z^{\mu_2+1})(1-z^{\mu_3+1})}{(1-z)^2}.
\end{align*}
Note that 
\[
\brk[s]1{z^m}\frac{1}{(1-z)^{2}}
=
\max(m+1,\,0)
\qquad\text{for any $m\in\mathbb Z$.}
\]
Expanding the numerator, we obtain
\[
\brk[s]1{s_{\lambda}}e_\mu
=
(B+1)
-
\sum_{i=1}^3
\max\brk3{
B-(\mu_i+1)+1,\,0
}
+R,
\]
where
\begin{align*}
R
&=
\sum_{\{i,j\}\in\binom{[3]}{2}}
\max\brk1{
B-(\mu_i+1)-(\mu_j+1)+1,\,0}
-
\max\brk3{
B-\sum_{i=1}^3(\mu_i+1)+1,\,0
}
\\
&=
\sum_{k=1}^3
\max(B-n+\mu_k-1,\,0)
-
\max(B-n-2,\,0)
=0-0=0.
\end{align*}
Here the notation $\binom{S}{j}$ denote the collection of
$j$-subsets of the set $S$, and
the last sum vanishes since $\mu_k\le m\le L=n-B$.
This proves \cref{eq:two-column.kappa.simple}.
The particular cases can then be verified directly.
\end{proof}

The next theorem combines the preceding extractions into a closed formula for
the normalized two-column coefficient $c_\lambda$.

\begin{theorem}\label{thm:S001-two-column-coefficient}
Let $r\ge s\ge t\ge 1$, $G=S^{001}_{rst}$, and $n=r+s+t-1$, and
let $B$ be defined by \cref{def:B.001}. Set
\[
\lambda=(2^B,1^{n-2B}),
\qquad\text{and}\qquad
c_\lambda=\brk[s]1{s_\lambda}Y_G.
\]
Put $d=r+s-3t$ and assume that $d\ge 0$. Then
\[
c_\lambda=\begin{dcases*}
-t(2s-r-1),
& if $r\ge s+t-1$,\\
-(j+1)(s-t-j)(2t-s+j+1),
& if $r\le s+t-2$ and $d=2j+1$ is odd,\\
(d+1)(s-t-j)^2-(d+1)(s-t)t+(dj+1)t,
& if $r\le s+t-2$ and $d=2j$ is even.
\end{dcases*}
\]
\end{theorem}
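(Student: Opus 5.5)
We start from the three-part elementary expansion of \cref{prop:S001}, written in the complementary index $m=n-k$ as in the proof of \cref{lem:S001.kernel}:
\[
Y_G=\sum_{m=0}^{s+t-1}\brk3{\sum_{j=0}^{t-1}(n-2m+j)e_{(n-m,\,j,\,m-j)}+t(n-2m+t)e_{(n-m,\,t,\,m-t)}-\sum_{i=0}^{m-s}(n-m-i)e_{(n-m,\,i,\,m-i)}}.
\]
Each term is $e_\mu$ for a weak composition $\mu$ with at most three parts. We apply \cref{lem:two-column.e-to-s} to every such term. This turns $c_\lambda$ into a finite, piecewise-linear weighted sum over $(m,j)$, $m$, and $(m,i)$ respectively.

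For each term, $\max(\mu)=\max(n-m,\,j,\,m-j)$. The vanishing condition $\max(\mu)\le n-B$ cuts the range of $m$ from below. Since $j\le t-1$ and $m-j\le s+t-1$, this essentially forces $n-m\le n-B$, that is, $m\ge B$, apart from small boundary effects when $m-j$ is large. The penalty $\sum_i\max(B-\mu_i,0)$ then splits into regions according to which of $j$ and $m-j$ fall below $B$.

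\emph{Case $r\ge s+t-1$.} Here $B=s+t-1$ is the largest admissible value of $m$, so only $m=s+t-1$ survives, with first part $n-m=r=n-B$. By \cref{lem:two-column.e-to-s}(1) every surviving $e_\mu$ has coefficient $1$. Then $c_\lambda$ is just the sum of the scalar coefficients at $m=s+t-1$:
\[
\sum_{j=0}^{t-1}(n-2m+j)+t(n-2m+t)-\sum_{i=0}^{t-1}(n-m-i),
\]
with $n-2m=r-s-t+1$ and $n-m=r$. A short summation should produce $-t(2s-r-1)$, and we will check this directly.

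\emph{Case $r\le s+t-2$.} Here $B=\floor*{n/2}$ and $n-2B\in\{0,1\}$, so we use specializations (2) and (3) of \cref{lem:two-column.e-to-s}. Each $e_\mu$ contributes $\mathbf 1(\max\mu\le n-B)$ when $n$ is even, and $2$, $1$ or $0$ when $n$ is odd. The parity of $n=r+s+t-1$ agrees with the parity of $d=r+s-3t$, so $n$ is even exactly when $d$ is odd. This explains the split in the statement.

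We then sum over all triples $\mu=(n-m,\,j,\,m-j)$ with all parts at most $n-B=\lceil n/2\rceil$. For fixed $m$ this is an interval condition on $j$ or $i$:
\[
m-\lceil n/2\rceil\le j\le \lceil n/2\rceil,\qquad n-m\le\lceil n/2\rceil.
\]
Combined with $j\le t-1$ and $i\le m-s$, the admissible region is a polygon in the $(m,j)$-plane whose corners depend on $s-t$ and on $j=\floor*{d/2}$. The summands are linear in $m$ and $j$, so each sum over the polygon is a sum of polynomial expressions. We will evaluate them by splitting the $m$-range at the breakpoints
\[
m=\lceil n/2\rceil,\qquad m=s,\qquad m=t+\lceil n/2\rceil,
\]
and then simplifying, treating the two parities of $d$ separately. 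We expect to check the resulting closed forms against small cases, for instance $t=1$ and the computed $(6,6,4)$ expansion.

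The main obstacle is this bookkeeping in the case $r\le s+t-2$. We must locate the breakpoints correctly, including how the hypothesis $d\ge0$ (that is, $3t\le r+s$) orders them relative to $t-1$ and $s$. We must also correctly account for the weights $2$ versus $1$ in the odd-$n$ case, where an equality of a part with $n-B$ changes the weight. We will first tabulate the admissible intervals for $j$ and $i$ as functions of $m$ under $d\ge0$, then sum; the negative third sum contributes only when $m\ge s$, which simplifies its range.
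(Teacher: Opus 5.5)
Your proposal is the paper's proof. In both, the vanishing clause of \cref{lem:two-column.e-to-s} forces $n-m\le n-B$, i.e.\ $m\ge B$ (exactly, since $n-m$ is itself a part), so Case~1 collapses to $m=s+t-1$ with all weights $1$, and your sum does equal $-t(2s-r-1)$; Case~2 then sums over $B\le m\le s+t-1$ with the weights of specializations~(2)--(3), which for odd $n$ are $1$ on the whole slice $m=B$ and, for $m>B$, are $1$ only at the endpoints $m-j=n-B$ and $m-i=n-B$ of the surviving ranges and $2$ elsewhere in them (the paper's $E_k$ bookkeeping).

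Two small corrections. First, $n-d=4t-1$ is odd, so $n$ and $d$ have opposite parity; your conclusion is right but ``agrees'' is not. Second, your breakpoints $m=s$ and $m=t+\lceil n/2\rceil$ never fall inside $[B,\,s+t-1]$, because $B\ge s$ and $m-t\le s-1<n-B$. So the Case~2 region is a simple trapezoid, and what remains is the routine summation you have not yet carried out.
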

\begin{proof}
We have $\lambda'=(n-B,\,B)$.
For any nonnegative integers $x$ and $y$ with $x+y\le n$, write
\[
\kappa(x,y)
=\brk[s]1{s_\lambda}e_{(x,\,y,\,n-x-y)}.
\]
Thus, whenever $x+y+z=n$, the symmetry of the elementary product gives
\[
\kappa(x,y)=\kappa(x,z)=\kappa(y,z).
\]
Taking the $s_\lambda$-coefficient in $Y_G=\sum_{k=r}^n Y_ke_k$
in \cref{prop:S001},
we find the upper bound $n$ 
can be reduced to $n-B$
by \cref{lem:two-column.e-to-s}.
Therefore,
\begin{equation}
\label{pf:Crst.master}
c_\lambda
=
[s_\lambda]Y_G
=
\sum_{k=r}^{n-B}
(P_k+M_k-N_k),
\end{equation}
where
\[
P_k=\sum_{i=n-k-t+1}^{n-k}(k-i)
\kappa(k,i),
\quad
M_k=t(2k+t-n)
\kappa(k,t),
\quad\text{and}\quad
N_k=\sum_{i=0}^{n-k-s}(k-i)
\kappa(k,i).
\]
The right side of \cref{pf:Crst.master} is simplified separately in
the two parameter regimes.

\noindent
\emph{Case 1: Suppose that $r\ge s+t-1$.}
By \cref{def:B.001},
\[
B=s+t-1,
\qquad
n-B=r,
\]
and the outer sum 
of \cref{pf:Crst.master}
reduces to the single term for $k=r$.  
Then the
first specialization in \cref{lem:two-column.e-to-s} shows that every
$\kappa$-value is~$1$, and
\[
c_\lambda
=
\sum_{i=s}^{s+t-1}(r-i)
+t(2r+t-n)
-\sum_{i=0}^{t-1}(r-i)
=-t(2s-r-1).
\]
\noindent
\emph{Case 2: Suppose that $r\le s+t-2$ and $d=2j+x$, where $d=r+s-3t$ and $x\in\{0,1\}$.}
Then
\[
n
=r+s+t-1
=d+4t-1
=2(2t+j)+x-1.
\]
By \cref{def:B.001},
\[
B=\floor{n/2}=2t+j+x-1.
\]
It follows that 
\[
n-B=2t+j
\quad\text{and}\quad
n-2B=1-x\in\{0,1\}.
\]
The relevant $\kappa$-values satisfy:
\begin{itemize}
\item
Each $\kappa$-function contains the variable $k\le n-B$. 
\item
For $P_k$,
$n-k-i\le t-1<2t+j=n-B$,
and its $\kappa$-value does not vanish if and only if $i\le n-B$.
\item
For $M_k$, one may estimate
$t<2t+j=n-B$
and
$n-k-t
\le n-r-t
<n/2
\le n-B$.
\item
For $N_k$, 
$i\le n-k-s\le n-r-s=t-1<2t+j=n-B$,
and the $\kappa$-value does not vanish if and only if $n-k-i\le n-B$, 
or equivalently, $i\ge B-k$.
\end{itemize}
These facts reduce \cref{pf:Crst.master} to two cases according to the
value of $x$.

\emph{Case 2-1. If $x=1$, then $n-2B=0$.}
By the second specialization of \cref{lem:two-column.e-to-s},
\[
c_\lambda
=
\sum_{k=r}^{B}
\brk4{
\sum_{i=n-k-t+1}^{B}
(k-i)
+t(2k+t-n)
-\sum_{i=B-k}^{n-k-s}(k-i)
}.
\]
Using the substitutions $r=n+1-s-t$, $n=2B$, and $B=2t+j$,
it is direct to simplify this sum as 
\[
c_\lambda
=-(j+1)(s-t-j)(2t-s+j+1).
\]

\emph{Case 2-2: If $x=0$, then $n-2B=1$.}
By the third specialization of \cref{lem:two-column.e-to-s}, assign
weight~$1$ to every $\kappa$-value and define
\[
E_k
=
\sum_{i=n-k-t+1}^{\min(n-k,\,n-B)}(k-i)
+t(2k+t-n)
-
\sum_{i=\max(0,\,B-k)}^{n-k-s}(k-i).
\]

For the summand with index $k=n-B=B+1$ in the outer sum of \cref{pf:Crst.master}, every nonzero $\kappa$-value is $1$ by \cref{lem:two-column.e-to-s}. Thus
\begin{equation}\label{pf:Crst.E.n-B}
E_{n-B}
=
\sum_{i=B-t+1}^{B}(n-B-i)
+t(t+1)
-\sum_{i=0}^{B-s}(n-B-i)
=
\frac{s^2-t^2-j^2-4tj+s+t-j}{2}.
\end{equation}
Now consider the summands with the other indices $r\le k\le B$.  
Then $M_k$ has $\kappa$-value~$2$.
In $P_k$, every summand has $\kappa$-value~$2$ except that at the endpoint $i=n-B$, which has $\kappa$-value~$1$.
In $N_k$, every summand has $\kappa$-value~$2$ except that at the endpoint $i=B-k$, which has $\kappa$-value~$1$.  
The net contribution of the two exceptional endpoints inside each $E_k$ is
\[
\brk1{k-(n-B)}-\brk1{k-(B-k)}=-(k+1).
\]
Therefore,
\begin{equation}\label{eq:Crst.even.k-sum}
c_\lambda
=E_{n-B}+
\sum_{k=r}^{B}(2E_k+k+1).
\end{equation}

For $r\le k\le B$,
\begin{align*}
P_k
&=\sum_{i=n-k-t+1}^{n-B}(k-i)
=
\frac{(t+k-B)(3k+t-3B-3)}2,
\quad\text{and}\\
N_k
&=
\sum_{i=B-k}^{n-k-s}(k-i)
=
\frac{(B-s+2)(4k+s-3B-1)}{2}.
\end{align*}
Using $B=2t+j-1$ gives
\begin{align*}
2E_k+k+1
&=
(t+k-B)(3k+t-3B-3)
+2t(2k+t-2B-1)
\\
&\quad
-(B-s+2)(4k+s-3B-1)
+k+1
\\
&=s^2+11t^2+6j^2+3k^2
+4ks+16tj-8ts-12kt-10kj-4js
+s-t-2j-1.
\end{align*}
It follows that 
\begin{equation}\label{pf:Crst.sum.2Ek+k+1}
\sum_{k=r}^B
(2E_k+k+1)
=\frac{(t+j-s)(4j^2-4js+8tj+3j-s+5t+1)}{2}.
\end{equation}
Substituting \cref{pf:Crst.E.n-B,pf:Crst.sum.2Ek+k+1}
into~\cref{eq:Crst.even.k-sum} and using $d=2j$, we obtain
\begin{align*}
c_\lambda
&=
2j^3-4j^2s+6tj^2+2s^2j-6tsj+4t^2j+j^2-2js+2tj+s^2-3ts+2t^2+t.
\\
&=
(d+1)(s-t-j)^2
-(d+1)(s-t)t
+(2j^2+1)t.
\end{align*}
This completes the proof.
\end{proof}

Now we can complete the proof of \cref{thm:s.001}.

\begin{proof}[Proof of the ``only if'' part of \cref{thm:s.001}]
We prove the contrapositive.  Let $r\ge s>t\ge1$, put
$d=r+s-3t$, and suppose that all the following conditions hold:
\begin{enumerate}
\item\creflabel[itm]{itm:664}
$(r,s,t)\ne (6,6,4)$;
\item\creflabel[itm]{itm:r<=2s-2}
$r\le 2s-2$;
\item\creflabel[itm]{itm:d>=0.s>=t+2}
either $d\ge 1$, or $d=0$ and $s\ge t+2$.
\end{enumerate}
These conditions say precisely that none of the four cases in the
statement of \cref{thm:s.001} holds.  Indeed, if $d=0$, then
$s=t+1$ is equivalent to $(r,s)=(2t-1,\,t+1)$.
By the definition of $c_\lambda$, it is enough to prove that
$c_\lambda<0$.
First suppose that $r\ge s+t-1$.
By \cref{thm:S001-two-column-coefficient},
\[
c_\lambda
=
-t(2s-r-1)
<0
\]
by \cref{itm:r<=2s-2}.

Assume instead that
$r\le s+t-2$.
By \cref{itm:d>=0.s>=t+2}, we have $d\ge 0$.
Write $d=2j+1$ or $d=2j$, according to the parity of $d$.
Then
\begin{equation}\creflabel[ineq]{pf:j>=0}
j\ge 0.
\end{equation}
Moreover,
\[
2j
\le d
=
r+s-3t
\le
(s+t-2)+s-3t
=
2(s-t-1).
\]
We keep using the symbol $a=s-t-j$.
The preceding inequality implies
\begin{equation}\creflabel[ineq]{pf:a>=1}
a\ge 1.
\end{equation}
On the other hand,
\begin{equation}\creflabel[ineq]{pf:a<=t+1-a}
a
=
s-t-j
\le
r-t-j
=
(d-s+3t)-t-j
\le
2t-s+j+1
=
t+1-a.
\end{equation}

Assume first that $d=2j+1$.
By \cref{thm:S001-two-column-coefficient},
\[
c_\lambda
=
-(j+1)(s-t-j)(2t-s+j+1)
=
-(j+1)a(t+1-a).
\]
By \cref{pf:j>=0,pf:a>=1,pf:a<=t+1-a},
we have $j+1>0$, $a>0$, and $t+1-a>0$.
Therefore $c_\lambda<0$.

Suppose that $d=2j$.
Then
\begin{equation}\creflabel[ineq]{pf:t>=2a}
0
\le
r-s
=
(d-s+3t)-s
=
t-2a.
\end{equation}
In particular, $t\ge 2a$.
By \cref{thm:S001-two-column-coefficient},
\[
c_\lambda
=
(d+1)a^2
-
\brk1{(d+1)(a+j)-2j^2-1}t.
\]
Set $H=(d+1)(a+j)-2j^2-1$.
Using $d=2j$ and \cref{pf:a>=1}, we get
\[
H
\ge
(2j+1)(j+1)-2j^2-1
=
3j
\ge 0.
\]
Together with \cref{pf:t>=2a}, this gives
\[
c_\lambda
=
(d+1)a^2-Ht
\le
(d+1)a^2-2aH
=
-\brk1{(2a+2)j+a-2}a.
\]
If $j\ge 1$, then $(2a+2)j+a-2\ge 3a>0$,
and hence $c_\lambda<0$.

If $j=0$, equivalently $d=0$, then $a=s-t$.
By \cref{itm:d>=0.s>=t+2}, we have $a\ge 2$.
By \cref{thm:S001-two-column-coefficient,pf:t>=2a},
\begin{equation}\creflabel[ineq]{pf:Crst<=0}
c_\lambda
=
a^2-(a-1)t
\le
a^2-2a(a-1)
=
a(2-a)
\le 0.
\end{equation}
Equality in \cref{pf:Crst<=0} can occur only when $a=2$ and 
$t=2a$.
In that case,
\[
t=4,\quad
s=t+a=6,\quad\text{and}\quad
r=d-s+3t=6.
\]
Thus $(r,s,t)=(6,6,4)$, contradicting \cref{itm:664}.
Therefore equality never occurs in \cref{pf:Crst<=0}, and so
$c_\lambda<0$.

In all cases, $c_\lambda<0$.
Thus the Schur coefficient of $X_{S^{001}_{rst}}$ indexed by
$\lambda$ is negative.
Consequently, $S^{001}_{rst}$ is not Schur positive.
This proves the ``only if'' part and completes the proof of
\cref{thm:s.001}.
\end{proof}

\section{\texorpdfstring{The spiders $S(a,b,2)$}{The spiders S(a,b,2)}}
\label{sec:Sab2}

We make further progress on the $e$-positivity of the spiders
$S(a,b,2)$ and establish Schur positivity for the entire family.
The $e$-positivity argument extracts a negative coefficient from a path
expansion, while the Schur-positivity proof uses a path--clique
bootstrap from selected clique-spider base graphs.

\citet[Lemma~4.4]{Zhe22} gave
a formula for the chromatic symmetric function of $S(a, b, 2)$ in terms of those of paths.

\begin{proposition}[{\citeauthor{Zhe22}}]\label{prop:X.Sab2}
Let $a\ge b\ge 2$ and $n=a+b+3$.  Then
\[
X_{S(a,b,2)}
=X_{P_n}+e_1X_{P_{n-1}}+2e_2X_{P_{n-2}}
-X_{P_{a+1}}X_{P_{b+2}}-X_{P_{a+2}}X_{P_{b+1}}.
\]
\end{proposition}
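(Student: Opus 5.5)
This is a cited formula (Zheng's Lemma~4.4), so the plan is to rederive it from the triple-deletion property of chromatic symmetric functions. Orlik--Terao and Guay-Paquet style triple deletion states the following: if $e_1,e_2,e_3$ form a triangle in $G\cup\{e_1,e_2,e_3\}$ and $G$ contains none of them, then
\[
X_{G+e_1+e_2}=X_{G+e_1}+X_{G+e_1+e_3}-X_{G+e_3}
\]
(equivalently $X_{G_{12}}-X_{G_{13}}-X_{G_1}+X_{G_3}=0$, up to relabeling).

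Label the spider $S=S(a,b,2)$ with center $c$. Let the two-leg be $c\,u\,w$, and let the long legs be paths of $a$ and $b$ further vertices attached at $c$, with neighbours $x$ (on the $a$-leg) and $y$ (on the $b$-leg) of $c$.

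Step one: apply triple deletion to the triangle $\{cu, cx, ux\}$. Take $G=S-cu-cx$ and let $e_3=ux$. Then $S=G+cu+cx$, and the identity expresses $X_S$ as $X_{G+cu+ux}+X_{G+cx+ux}-X_{G+ux}$.

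\begin{itemize}
\item $G+ux+cx$ is the spider whose leg $w\,u\,x\cdots$ is attached at $x$; it is a spider $S(a-1,b+1,2)$ at $x$, not simpler. This choice is therefore not optimal.
\end{itemize}

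Better choice: use the triangle formed by $cu$, $cy$, $uy$, aiming for graphs that are paths or disjoint unions of paths. Replacing $cu$ by $uy$ turns the graph into a spider centered at $y$ with legs $b-1$, $2$, $a+1$. Iterating this "leg sliding" and telescoping should give a sum of paths.

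A cleaner alternative is to use the known relation for paths and spiders coming from the $p$-expansion or the Stanley power-sum formula $X_G=\sum_{E'\subseteq E}(-1)^{|E'|}p_{\lambda(E')}$. For a tree $T$ and a vertex $c$ of degree $3$, partition the edge subsets according to which of the three edges at $c$ are included. This yields the inclusion--exclusion identity
\[
X_S=X_{P_{a+b+1}\cup\,\text{leg}}\ \text{combinations}.
\]
Concretely, I would prove the general identity
\[
X_{S(a,b,c)}=X_{P_{a+b+1}}X_{P_c}+X_{P_{a+c+1}}X_{P_b}-X_{P_a}X_{P_b}X_{P_c}\cdot(\dots),
\]
but it is simpler to follow Zheng's approach directly.

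I would carry out the steps in this order.

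\begin{enumerate}
\item Apply the deletion--contraction-free recurrence of Orellana--Scott, which holds for a vertex $c$ of degree $3$ with neighbours $u,x,y$:
\[
X_S-X_{S-cu+uy}=X_{S-cu}-X_{S-cu-cy+uy\,\dots}.
\]
Here every graph other than $S$ is either a path $P_n$ or a disjoint union of two paths, or a smaller spider with two-leg length $1$.
\item For the latter graphs, reduce once more: a spider $S(a,b,1)$ is handled by a second application of the same recurrence at its center, turning it into $P_n$, $e_1X_{P_{n-1}}$-type terms and products of paths. Use $X_{P_1}=e_1$ and $X_{P_2}=2e_2$, which produces the terms $e_1X_{P_{n-1}}$ and $2e_2X_{P_{n-2}}$.
\item Collect terms. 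The two products $X_{P_{a+1}}X_{P_{b+2}}$ and $X_{P_{a+2}}X_{P_{b+1}}$ arise from cutting the edges $cx$ and $cy$, respectively, with the two-leg attached to the other side. Cross terms should cancel telescopically.
\end{enumerate}

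The main obstacle is the bookkeeping in the last step: ensuring that the spider $S(a,b,1)$ intermediates cancel exactly and that coefficients $1,1,2$ appear. I would verify the final identity against small cases (e.g.\ $a=b=2$, $n=7$) using the Stanley--Wang path expansion $X_{P_n}=\sum_{I\vDash n} w_Ie_I$ from \cref{thm:barbell}, to catch sign errors.
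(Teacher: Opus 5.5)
\textbf{There is a genuine gap: no identity is ever actually derived.} The paper only cites Zheng for this formula, so your argument has to stand on its own.

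\emph{The displayed triple-deletion rule is wrong.} The correct rule is
\[
X_{G+e_1+e_2}=X_{G+e_1}+X_{G+e_2+e_3}-X_{G+e_3}.
\]
Here the edge shared by the two two-edge graphs is absent from both one-edge graphs. Your version has $G+e_1+e_3$ in place of $G+e_2+e_3$. Combined with the correct rule, it would force $X_{G+e_1}=X_{G+e_2}$, so it is false, and no relabeling repairs it.

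\emph{Step~1 moves the wrong edge.} The graph $S-cu+uy$ is the spider $S(a+1,b-1,2)$, with the two-leg hung on $y$. It is not a path, and not a spider with a leg of length~$1$, as you claim. For that move the right-hand side must contain $X_{S-cy}$, not $X_{S-cu}$. The display also trails off with ``$\dots$''.

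\emph{Step~3 is only a hope.} ``Cross terms should cancel telescopically'' is never exhibited. Its attribution of the two products is also wrong: the edge $cx$ is never cut. You flag this bookkeeping as the main obstacle, but it is the entire content of the proof. The abandoned power-sum attempt and the unfinished identity for $S(a,b,c)$ should simply be dropped.

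\textbf{What is missing is one correctly stated local move, applied twice.} If $G$ contains edges $pq$ and $pr$ but not $qr$, then
\[
X_G-X_{G-pq+qr}=X_{G-pr}-X_{G-pq-pr+qr}.
\]
To see this, note that $X_{G-pr}-X_G$ and $X_{G-pq-pr+qr}-X_{G-pq+qr}$ both sum over colorings proper on $G-pq-pr$ with $\kappa(p)=\kappa(r)$. They add the extra conditions $\kappa(q)\ne\kappa(p)$ and $\kappa(q)\ne\kappa(r)$, respectively, and these coincide when $\kappa(p)=\kappa(r)$.

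Take $(p,q,r)=(c,y,u)$; this slides the $b$-leg onto $u$ and produces $S(a+1,b,1)$. Then, in that spider, take $(p,q,r)=(u,y,w)$; this slides the $b$-leg onto $w$ and produces $P_n$. The two moves give
\begin{align*}
X_{S(a,b,2)}&=X_{S(a+1,b,1)}+X_{P_2}X_{P_{n-2}}-X_{P_{a+1}}X_{P_{b+2}},\\
X_{S(a+1,b,1)}&=X_{P_n}+X_{P_1}X_{P_{n-1}}-X_{P_{a+2}}X_{P_{b+1}}.
\end{align*}
Adding them and using $X_{P_1}=e_1$ and $X_{P_2}=2e_2$ gives the proposition at once. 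Nothing cancels, and the hypothesis $a\ge b\ge2$ plays no role beyond $b\ge1$.

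Your ``better choice'' (sliding the two-leg along the $b$-leg, with pivot $cy$) also works once the rule is corrected. It gives $X_{S(a,b,2)}=X_{S(a+1,b-1,2)}+X_{P_b}X_{P_{a+3}}-X_{P_{a+1}}X_{P_{b+2}}$. However, it needs $b$ iterations followed by the telescoping $\sum_{j=1}^{b}X_{P_j}X_{P_{n-j}}-\sum_{j=3}^{b+2}X_{P_j}X_{P_{n-j}}$.
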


\citet[Theorem~4.1 and Corollary~4.4 (2)--(3)]{WW23-DAM} 
obtained some necessary conditions for the $e$-positivity
of the spiders $S(a, b, 2)$.

\begin{theorem}[{\citeauthor{WW23-DAM}}]
\label{thm:WW.e+.Sab2}
Let $a\ge b\ge2$. Suppose that the spider $S(a,b,2)$ is $e$-positive.
Then either $3\mid b$ or $(a,b)\equiv(0,1)\pmod3$.
Moreover, if $b\le 11$, 
then $S(a,b,2)$ is $e$-positive if and only if
$(a, b)\in\{(5, 3), \,(6,4), \,(12,4), \,(8, 6), \,(9,7)\}$.
\end{theorem}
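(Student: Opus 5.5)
The plan is to work entirely from the path decomposition in \cref{prop:X.Sab2}, together with the path case $X_{P_m}=\sum_{I\vDash m}w_Ie_I$ of \cref{thm:barbell}. For a partition $\lambda$,
\[
[e_\lambda]X_{P_m}=\sum_{\rho(I)=\lambda}w_I,
\]
and the coefficient of $e_\lambda$ in a product $X_{P_p}X_{P_q}$ is a sum over the ways of splitting the multiset of parts of $\lambda$ into a composition of $p$ and a composition of $q$. So every coefficient $[e_\lambda]X_{S(a,b,2)}$ is an explicit finite signed sum of products $\prod(i_j-1)$. The task is therefore to find, for each parameter region, one partition $\lambda$ whose coefficient is provably negative.

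\emph{Step 1: two- and three-part coefficients.} First I will compute $[e_\lambda]X_{S(a,b,2)}$ in closed form for $\ell(\lambda)\le 3$, as piecewise polynomials in $a$, $b$ and the parts of $\lambda$. Two-part partitions contribute $p(q-1)+q(p-1)$ from each path term. They meet the product terms only when $\{p,q\}$ equals $\{a+1,b+2\}$ or $\{a+2,b+1\}$, up to reordering. I expect this to rule out most pairs $(a,b)$ with $a$ large relative to $b$, or with $a-b$ in certain ranges. Typical choices will be $\lambda$ near $(a+1,b+2)$, $(a+2,b+1)$, or $(a+1,b+1,1)$.

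\emph{Step 2: the residue condition modulo $3$.} For the condition ``$3\mid b$ or $(a,b)\equiv(0,1)\pmod 3$'', I would use partitions with many parts equal to $3$, of the form $\lambda=(3^k,u,v)$ with $u,v\le 3$ chosen according to $b\bmod 3$. The motivation is that the weight $w_I$ kills any part $1$ in a non-initial position, so the path coefficients for such $\lambda$ reduce to counting arrangements of a few small exceptional parts among the $3$'s. The product terms $X_{P_{a+1}}X_{P_{b+2}}$ and $X_{P_{a+2}}X_{P_{b+1}}$ contribute only when $a+1,b+2$ (respectively $a+2,b+1$) are compatible with splitting $\lambda$ into two sub-multisets. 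That compatibility is governed exactly by the residues of $a$ and $b$ modulo $3$. Writing $b=3\beta+\epsilon$ and $a=3\alpha+\eta$, I expect the coefficient to be a low-degree polynomial in $\alpha,\beta$. For each residue pair outside the allowed list, I would exhibit one $\lambda$ making this polynomial negative. This is the step I expect to be the main obstacle: choosing $\lambda$ so that the cancellation between path and product terms leaves a manifestly negative quantity. If $(3^k,u,v)$ fails, I would try $\lambda$ with two large parts near $a$ and $b$ plus a block of $3$'s.

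\emph{Step 3: the finite classification for $b\le 11$.} For each fixed $b\le 11$, the coefficients from Steps 1--2 become polynomials in $a$ alone. These should be negative for all $a$ beyond an explicit threshold, except in the residue classes already excluded by Step 2. This leaves finitely many pairs $(a,b)$, which I would check by direct expansion. For the five listed spiders, I would verify $e$-positivity by computing the full $e$-expansion from \cref{prop:X.Sab2}.
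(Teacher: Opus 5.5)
\textbf{Verdict: genuine gap.} The paper quotes this theorem from Wang and Wang (their Theorem~4.1 and Corollary~4.4) without proof. The closest in-paper argument is \cref{thm:e+.Sab2.nonzero.residue}. It uses exactly your framework (\cref{prop:X.Sab2} plus the path formula), but it commits to an explicit $\lambda=(5^2,3^{A+k-2})$ and evaluates it. Your proposal never fixes a $\lambda$ or proves a single inequality, so as written it proves nothing, and the choices it does commit to fall short.

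\emph{Step~1.} This step does not give large-$a$ obstructions. For $a\ge b+2$ one gets, e.g., $[e_{(a+1,b+2)}]X_{S(a,b,2)}=ab+a-1$ and $[e_{(a+1,b+1,1)}]X_{S(a,b,2)}=2ab$. Two-part coefficients are negative only for $a=b$ (value $-2a-3$ at $(a+2,a+1)$) and for $a=b+1$ (value $-(a+1)$ at $(a+1,a+1)$).

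\emph{Step~2.} The family $(3^k,u,v)$ with $u,v\le3$ does handle the classes $(a,b)\equiv(1,2),(2,1)$ via $(3^k)$, the class $(2,2)$ via $(3^k,1)$, and the class $(1,1)$ via $(3^k,2)$. It fails for $(a,b)\equiv(0,2)$. Write $a=3p$ and $b=3q-1$. The only candidates of the right size give $[e_{(3^k,2)}]X_{S(a,b,2)}=2^{k-2}(8+6q-3p)$, which is positive whenever $a<2b+10$, and $[e_{(3^k,1,1)}]X_{S(a,b,2)}=0$. You must leave the family here. For instance, $\lambda=(4^2,3^{p+q-2})$ gives $2^{p+q-5}(-27p^2-54pq+54q^2+51p-48q-8)$, which is negative for all $p\ge q\ge1$; it equals $-4$ for $S(3,2,2)$ and $-55$ for $S(6,5,2)$.

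\emph{Step~3: the decisive gap.} For $b\in\{3,6,9\}$ every residue of $a$ survives the first part of the statement, so infinitely many spiders must still be excluded. Yet your Step~2 family gives no negative coefficient there:
\begin{itemize}
\item for $(a,b)\equiv(0,0)$: $[e_{(3^k)}]=3\cdot2^{k-1}$ and $[e_{(3^k,2,1)}]=(k+1)2^k$;
\item for $(a,b)\equiv(2,0)$, now with $a=3p-1$ and $b=3q$: $[e_{(3^k,2)}]=2^{k-2}(8+6p-3q)>0$ and $[e_{(3^k,1,1)}]=0$;
\item for $(a,b)\equiv(1,0)$: $[e_{(3^k,1)}]=2^k$, and $[e_{(3^k,2^2)}]$ is positive for all $b\le30$.
\end{itemize}

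This failure is structural. When $3\mid b$, neither $b+1$ nor $b+2$ is a multiple of $3$, so in every product term the short path must absorb a part other than $3$. A block of $3$'s then never gives the product terms the ratio $9\cdot2^{k-2}:3\cdot2^{k-1}$ over the path term, which is what makes $[e_{(3^k)}]$ negative in the classes $(1,2)$ and $(2,1)$.

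The obstructions come instead from blocks of the parts $b+1$ and $b+2$. A block $(m^j)$ has path weight $m(m-1)^{j-1}$ against product weight $m^2(m-1)^{j-2}$. For example:
\begin{itemize}
\item $[e_{(4^3)}]X_{S(6,3,2)}=-12$;
\item $[e_{(5,4^2)}]X_{S(7,3,2)}=-43$;
\item $[e_{(4^3,3)}]X_{S(9,3,2)}=-3$.
\end{itemize}
Your $(3^k,2^2)$ coefficient supplies an explicit bound on $a$ in terms of $b$ only in the class $(a,b)\equiv(0,1)$. Without families of this new kind and such a bound in every allowed class, the reduction of the case $b\le11$ to finitely many direct checks is unsupported.
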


We now settle the case $3\nmid b$.

\begin{theorem}\label{thm:e+.Sab2.nonzero.residue}
Suppose that $a\ge b\ge2$ and $3\nmid b$.  Then $S(a,b,2)$ is
$e$-positive if and only if
\[
(a,b)\in\{(6,4),\,(12,4),\,(9,7)\}.
\]
\end{theorem}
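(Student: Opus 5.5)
By \cref{thm:WW.e+.Sab2}, the spiders $S(a,b,2)$ with $3\nmid b$ and $b\le 11$ are already settled. The $e$-positive ones among them are exactly $(6,4)$, $(12,4)$ and $(9,7)$, because the remaining pairs $(5,3)$ and $(8,6)$ on that list have $3\mid b$. For every $b$, \cref{thm:WW.e+.Sab2} also forces $(a,b)\equiv(0,1)\pmod 3$ whenever $S(a,b,2)$ is $e$-positive and $3\nmid b$. So the whole task is the following: for $b\ge 13$ with $b\equiv1\pmod3$ and $a\ge b$, $a\equiv0\pmod3$, exhibit a partition $\lambda$ with $[e_\lambda]X_{S(a,b,2)}<0$.

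The plan is to work entirely from \cref{prop:X.Sab2}. Its ingredients are $X_{P_m}=\sum_{I\vDash m}w_Ie_I$ from \cref{thm:barbell}, together with the corresponding products $X_{P_{a+1}}X_{P_{b+2}}$ and $X_{P_{a+2}}X_{P_{b+1}}$. We restrict attention to partitions with few parts, two or three, whose parts are tied to the leg lengths. This keeps the coefficient extraction finite and explicit.

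For a candidate $\lambda$, the negative contribution is
\[
-\sum_{\substack{I\vDash a+1,\ J\vDash b+2\\ \rho(IJ)=\lambda}}w_Iw_J
-\sum_{\substack{I\vDash a+2,\ J\vDash b+1\\ \rho(IJ)=\lambda}}w_Iw_J.
\]
The positive contribution is
\[
\sum_{\rho(I)=\lambda}w_I+\sum_{\rho(I)=\lambda\setminus\{1\}}w_I+2\sum_{\rho(I)=\lambda\setminus\{2\}}w_I,
\]
where the second sum is present only if $\lambda$ has a part $1$, and the third only if $\lambda$ has a part $2$. Every $w_I$ for $\ell(I)\le3$ is an explicit product: $w_{(i,j)}=i(j-1)$ and $w_{(i,j,k)}=i(j-1)(k-1)$. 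Hence each such coefficient is a piecewise polynomial in $a$ and $b$, with the pieces determined by the orderings of the parts.

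I would look for a three-part $\lambda$ such as $(a+1,\,b-c,\,c+2)$ or $(a+2,\,b+1-c,\,c)$, with $c$ chosen in terms of $b$ so that the positive path terms are small (few permutations of the parts give large weight) while both product terms contribute. The residues $b\equiv1$ and $a\equiv0\pmod 3$ should dictate $c$, for instance $c=(b-1)/3$ or a neighbouring value. This mirrors how \citet{WW23-DAM} obtained their congruence conditions, and the exceptional pairs $(12,4)$ and $(9,7)$ suggest the witness only becomes negative once $b\ge13$.

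The main obstacle is finding the right family of $\lambda$ and then verifying the sign uniformly for all $a\ge b\ge13$. I would first compute, by machine, the most negative $e$-coefficients for small admissible pairs such as $(15,13)$, $(18,13)$ and $(18,16)$, and read off a pattern. Once a candidate is fixed, the coefficient splits into finitely many orderings of the parts, according to whether $a+1$ exceeds the sum of the other parts and similar comparisons. On each piece I would treat the coefficient as a polynomial in $a$ and $b$, show it is negative at the smallest admissible values, and show it is monotone in $a$ by a derivative or difference argument, as in the proof of \cref{thm:s+.000}. If no single $\lambda$ works for all $a$, I would split into the cases $a$ close to $b$ and $a$ large, using a different witness in each.
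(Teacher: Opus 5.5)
\textbf{There is a genuine gap: witnesses with at most three parts essentially never exist here.}

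Your reduction via \cref{thm:WW.e+.Sab2} to $b\ge 13$, $b\equiv 1$, $a\equiv 0\pmod 3$ is correct and matches the paper. These residues force $a-b\equiv 2\pmod 3$, hence $a\ge b+2$. The gap is the decision to look for the witness among partitions with at most three parts. For these spiders such coefficients are, apart from one thin subfamily, never negative, so no choice of $c$ and no case split in $a$ can work.

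\emph{Two parts.} With $a\ge b+2$, the only two-part coefficients touched by the product terms of \cref{prop:X.Sab2} are at $\{a+1,b+2\}$ and $\{a+2,b+1\}$. There they equal $ab+a-1$ and $ab+b-1$.

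\emph{Your three-part shapes.} Take $\lambda=(a+1,x,y)$ with $x+y=b+2$, $x\ne y$, $x,y\ge 2$. Only three things contribute: $X_{P_n}$, possibly $2e_2X_{P_{n-2}}$, and the terms of $X_{P_{a+1}}X_{P_{b+2}}$ with $I=(a+1)$. This gives $[e_\lambda]X_{S(a,b,2)}\ge a(4xy-3x-3y+2)-b>0$. Likewise $(a+2,x,y)$ with $x+y=b+1$ gives at least $(a+1)(4xy-3x-3y+2)-b+1>0$. The boundary cases are positive too; for example $(a+1,b+1,1)$ gives exactly $2ab$.

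\emph{Remaining three-part shapes.} These are the ones containing $b+2$ or $b+1$, including $(b+2,b+1,a-b)$, which meets both products. Checking them, a negative value appears only when $a=2b+1$, at $\lambda=(b+2,b+1,b+1)$, where the coefficient is $-3b^2-5b-1$. So, for instance, for $(a,b)=(15,13)$ every coefficient indexed by a partition of length at most $3$ is nonnegative.

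\textbf{The missing idea is to take $\lambda$ long and tuned to the residues.} The paper writes $a=3A$ and $b=3k+1$, so $k\ge 4$ and $A\ge 5$, and uses $\lambda=(5^2,3^{A+k-2})$.

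Because $\lambda$ has no parts $1$ or $2$, the terms $e_1X_{P_{n-1}}$ and $2e_2X_{P_{n-2}}$ vanish. Because $a+1\equiv 1$ while $a+2\equiv b+1\equiv 2\pmod 3$, the parts of $\lambda$ can be distributed between the two path factors in exactly one way. Both $5$'s go to $P_{a+1}$, and one $5$ goes to each of $P_{a+2}$ and $P_{b+1}$.

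Each product coefficient therefore factors into single-path coefficients with closed forms:
$[e_{(3^m)}]X_{P_{3m}}=3\cdot 2^{m-1}$, $[e_{(5,3^m)}]X_{P_{3m+5}}=(6m+5)2^m$, and $[e_{(5^2,3^m)}]X_{P_{3m+10}}=4(m+1)(3m+5)2^m$.
This yields $[e_\lambda]X_{S(a,b,2)}=2^{A+k-2}(-6k^2-24kd-6d^2+16k+38d-1)$ with $d=A-k-1\ge 0$, which is negative for $k\ge 4$.

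Your proposed machine search might well have surfaced such a partition. The plan as written, however, never examines partitions of this length.
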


\begin{proof}
The $e$-positivity of the three listed spiders follows from
\cref{thm:WW.e+.Sab2}. It remains to prove the
converse.
Let $G=S(a,b,2)$.
By \cref{thm:WW.e+.Sab2}, 
we can suppose that $b=3k+1$ 
and $a=3A$ for some $k, A\in\mathbb Z$ such that $k\ge 4$ and $A\ge 5$.
We shall show that $G$ is not $e$-positive.

Let $n=\abs{V(G)}$. Then $n=a+b+3=3A+3k+4$. 
Set $\lambda=(5^2,\,3^{A+k-2})\vdash n$.
By \cref{prop:X.Sab2}, 
\begin{align*}
[e_\lambda]X_G
&=[e_\lambda]X_{P_n}
-[e_\lambda]X_{P_{a+1}}X_{P_{b+2}}
-[e_\lambda]X_{P_{a+2}}X_{P_{b+1}}
\\
&=[e_{5^23^{A+k-2}}]X_{P_n}
-[e_{5^23^{A-3}}]X_{P_{a+1}}
[e_{3^{k+1}}]X_{P_{b+2}}
-[e_{53^{A-1}}]X_{P_{a+2}}
[e_{53^{k-1}}]X_{P_{b+1}}.
\end{align*}
Using the path formula in \cref{thm:barbell}, 
we obtain for $m\ge 1$, 
\begin{align*}
[e_{(3^m)}]X_{P_{3m}}
&=3\cdot2^{m-1}, 
\\
[e_{(5,3^m)}]X_{P_{3m+5}}
&=(6m+5)2^m, 
\quad\text{and}\\
[e_{(5^2,3^m)}]X_{P_{3m+10}}
&=4(m+1)(3m+5)2^m.
\end{align*}
Since $k\ge 4$ and $A\ge 5$, we can continue the calculation above as
\begin{align*}
[e_\lambda]X_G
&=[e_{5^23^{A+k-2}}]X_{P_n}
-[e_{5^23^{A-3}}]X_{P_{a+1}}
[e_{3^{k+1}}]X_{P_{b+2}}
-[e_{53^{A-1}}]X_{P_{a+2}}
[e_{53^{k-1}}]X_{P_{b+1}}
\\
&=4(A+k-1)(3A+3k-1)2^{A+k-2}
-12(A-2)(3A-4)2^{A+k-3}
-(6A-1)(6k-1)2^{A+k-2}
\\
&
=2^{A+k-2}(-6k^2-24kd-6d^2+16k+38d-1), 
\end{align*}
where $d=A-k-1$.
Since $a\ge b$, we find $d\ge0$, and
\[
-6k^2-24kd-6d^2+16k+38d-1
=-6k^2+16k-1-d(24k+6d-38)
\le -6k^2+16k-1
\le -8k-1
<0.
\]
This completes the proof.
\end{proof}

We now turn to Schur positivity. To derive the aforementioned path--clique bootstrap, 
we need the following formula of \citet[Proposition~3.1]{QTW26}.
It can be obtained alternatively from \cref{thm:3spider:clique} by taking $G=F$, $H=K_1$, and
$(g,h,k)=(l,\,0,\,0)$.

\begin{theorem}[\citeauthor{QTW26}]\label{thm:KPG}
Let $H$ be a rooted graph.  Then for any $l\ge0$ and $m\ge1$,
\[
X_{P^l(H,K_m)}
=(m-1)!\sum_{i=0}^{m-1}(1-i)e_iX_{H^{l+m-1-i}}.
\]
\end{theorem}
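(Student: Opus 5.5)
We plan to prove the formula directly by counting proper colorings, in the same way as the proof of \cref{thm:omegaY.000}, and to use an induction to turn the power-weighted sums that appear into chromatic symmetric functions of the graphs $H^{j}$. Let $v$ be the vertex of $H^l$ that is identified with a vertex of $K_m$ in $P^l(H,K_m)$. For $p\ge0$ put
\[
T_p=\sum_{\kappa}x_{\kappa(v)}^{p}\prod_{w\in V(H^l)}x_{\kappa(w)},
\]
where $\kappa$ runs over the proper colorings of $H^l$. In particular $T_0=X_{H^l}$.

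\emph{Step 1.} Fix a proper coloring $\kappa$ of $H^l$ and set $q=\kappa(v)$. The other $m-1$ vertices of $K_m$ receive distinct colors different from $q$, so they contribute $(m-1)!\,e^{(q)}_{m-1}$. The identity $e^{(q)}_{m-1}=\sum_{j}(-x_q)^{m-1-j}e_j$, proved in \cref{thm:omegaY.000}, then gives
\[
\frac{X_{P^l(H,K_m)}}{(m-1)!}=\sum_{j=0}^{m-1}(-1)^{m-1-j}e_j\,T_{m-1-j}.
\]
This holds for every $m\ge1$ and every $l\ge0$.

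\emph{Step 2.} The same computation with $m=2$ handles a pendant edge. Writing $T^{(l)}_p$ for the sum $T_p$ taken over $H^l$, we get $X_{H^{l+1}}=e_1T^{(l)}_0-T^{(l)}_1$. More generally, putting the weight $x^p$ on the new leaf $u$ gives
\[
T^{(l+1)}_p=p_pT^{(l)}_0-T^{(l)}_{p+1}\qquad(p\ge1).
\]
This recursion lets us write each $T^{(l)}_p$ in terms of the functions $X_{H^{l+j}}$ and of power sums.

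\emph{Step 3.} With Step 2 available, we prove the target formula by induction on $m$, for all $l$ at once. The case $m=1$ is trivial. For $m=2$, Step 1 reads $e_1T_0-T_1=X_{H^{l+1}}$, which matches the target because the coefficient $1-i$ vanishes at $i=1$. For the inductive step, substitute Step 1 into the target formula. The claim becomes an identity of the form
\[
\sum_{j}(-1)^{m-1-j}e_jT^{(l)}_{m-1-j}=\sum_{i}(1-i)e_iX_{H^{l+m-1-i}}.
\]
We will prove it by generating functions in a formal variable $z$. Let $E(z)=\sum_j e_jz^j$, $\mathcal T_l(z)=\sum_k(-z)^kT^{(l)}_k$ and $\mathcal X_l(z)=\sum_kX_{H^{l+k}}z^k$. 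Since $\sum_i(1-i)e_iz^i=E(z)-zE'(z)$, the claim says $E(z)\mathcal T_l(z)=(E(z)-zE'(z))\mathcal X_l(z)$, up to the appropriate truncation. In generating-function form, Step 2 relates $\mathcal T_{l+1}$, $\mathcal T_l$ and $P(z)=\sum_pp_pz^{p}$. The classical identity $zE'(z)/E(z)=\sum_{p\ge1}(-1)^{p-1}p_pz^p$ is exactly what converts the power sums coming from Step 2 into the factor $E-zE'$.

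\emph{Main obstacle.} We expect Step 3, the bookkeeping that eliminates $T^{(l)}_p$ with $p\ge1$, to be the hard part. If the generating-function identity becomes unwieldy, the fallback is the route the paper indicates. Specialize \cref{thm:3spider:clique} to the data $H$, $K_1$, $K_m$ with $(g,h,k)=(l,0,0)$; the resulting graph is exactly $P^l(H,K_m)$, and the middle sum is empty. The first sum there becomes $\sum_ie_iX_{H^{l+m-1-i}}$. The remaining task is then to show that
\[
\sum_{x+y+z=m-2}(1-x)e_xX_{H^{l+y}}X_{P_{z+1}}=\sum_i i\,e_iX_{H^{l+m-1-i}}.
\]
This is again a telescoping identity, now to be derived from the path formula in \cref{thm:barbell}.
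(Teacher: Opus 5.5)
Your main route is correct in outline and differs from the paper's, but Step~2 has an index slip and Step~3 is never carried out. With your definition of $T_p$, the new leaf $u$ already carries its own factor $x_{\kappa(u)}$, so the extra weight raises its exponent to $p+1$. The recursion is therefore
\[
T^{(l+1)}_p=p_{p+1}T^{(l)}_0-T^{(l)}_{p+1}\qquad(p\ge0),
\]
whose $p=0$ case is your $X_{H^{l+1}}=e_1T^{(l)}_0-T^{(l)}_1$. With $p_p$ in place of $p_{p+1}$, Step~3 would not close.

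With the correct index, the induction on $m$ takes one line, provided you apply the hypothesis at level $l+1$; this is what ``for all $l$ at once'' has to mean. Writing $T^{(l)}_{m-j}=p_{m-j}X_{H^l}-T^{(l+1)}_{m-j-1}$ for $j<m$, you get
\[
\sum_{j=0}^{m}(-1)^{m-j}e_jT^{(l)}_{m-j}
=\Bigl(e_m-\sum_{j=0}^{m-1}(-1)^{m-1-j}e_jp_{m-j}\Bigr)X_{H^l}
+\sum_{j=0}^{m-1}(-1)^{m-1-j}e_jT^{(l+1)}_{m-1-j}.
\]
Newton's identity turns the bracket into $(1-m)e_m$. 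The case $(m,l+1)$ turns the last sum into $\sum_{i=0}^{m-1}(1-i)e_iX_{H^{l+m-i}}$. Together these give the case $(m+1,l)$.

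In generating-function form this is the exact identity $\mathcal T_l=(1-zE'/E)\,\mathcal X_l$; no truncation is needed. Step~2 must be unwound toward larger $l$, where it terminates at $T^{(l+p)}_0=X_{H^{l+p}}$. Running it upward from $l=0$ would presuppose the $l=0$ case, which is part of what you are proving.

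The paper gives no argument of its own. It cites \citet{QTW26} and remarks that the formula is the specialization of \cref{thm:3spider:clique} with $(g,h,k)=(l,0,0)$, which is exactly your fallback. That route depends on the much heavier three-leg formula. Since \cref{thm:3spider:clique} assumes $g\ge1$, it also does not literally cover $l=0$. Your residual identity, after grouping by $y$, becomes
\[
\sum_{x=0}^{i-1}(1-x)e_xX_{P_{i-x}}=ie_i.
\]
This is just the case $H=K_1$, $l=0$ of the statement itself, i.e.\ $X_{K_i}=i!e_i$, or equivalently Stanley's $\sum_nX_{P_n}z^n=E/(E-zE')$. So your direct argument is the natural way to supply it in any case. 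Your route is self-contained, needs only the deleted-variable identity from the proof of \cref{thm:omegaY.000} and Newton's identity, and treats all $l\ge0$ uniformly.
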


Here is the path--clique bootstrap lemma.

\begin{lemma}\label{lem:path-clique.bootstrap}
Let $H$ be a rooted graph and let $L\ge1$.  If the tailed graphs
$H^0$, $\dots$, $H^{L-1}$ are Schur positive, and if the graph $P^0(H,\,K_{L+1})$ is also Schur positive, then so is the graph $H^L$.
\end{lemma}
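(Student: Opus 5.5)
Apply \cref{thm:KPG} with $l=0$ and $m=L+1$. This writes $X_{P^0(H,K_{L+1})}$ as
\[
\frac{X_{P^0(H,K_{L+1})}}{L!}
=\sum_{i=0}^{L}(1-i)e_iX_{H^{L-i}}
=X_{H^L}+0\cdot e_1X_{H^{L-1}}-\sum_{i=2}^{L}(i-1)e_iX_{H^{L-i}},
\]
since $e_0=1$. Solving for the top tail gives
\[
X_{H^L}=\frac{X_{P^0(H,K_{L+1})}}{L!}+\sum_{i=2}^{L}(i-1)\,e_i\,X_{H^{L-i}}.
\]
Every term on the right is Schur positive. The first term is Schur positive because $P^0(H,K_{L+1})$ is Schur positive by hypothesis and $L!>0$. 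For each $2\le i\le L$, the tail $H^{L-i}$ is one of $H^0,\dots,H^{L-2}$, so it is Schur positive by hypothesis. The product $e_iX_{H^{L-i}}$ is then Schur positive by the vertical Pieri rule of \cref{prop:Pieri}, since multiplying $s_\lambda$ by $e_i$ gives a sum of Schur functions with coefficient $1$. The coefficient $i-1$ is positive. A sum of Schur-positive functions is Schur positive, so $X_{H^L}$ is Schur positive.

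The only point needing care is the index bookkeeping in \cref{thm:KPG}, namely checking that the tail lengths $l+m-1-i=L-i$ run exactly over $0,\dots,L$. The $i=0$ term is $X_{H^L}$ with coefficient $1$, and the $i=1$ term vanishes. So the hypothesis on $H^{L-1}$ is not actually used, beyond its being harmless; it is presumably stated so the lemma can be iterated inductively in $L$. Also, the clique $K_{L+1}$ is glued at the root of $H$ with $l=0$, which matches $P^0(H,K_{L+1})$ as stated. I do not expect any genuine obstacle beyond this sign check.
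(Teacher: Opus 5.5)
Your proof is correct. It rests on the same ingredient as the paper's, \cref{thm:KPG}, but organizes the argument differently.

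\textbf{The paper's route.} It subtracts two instances of \cref{thm:KPG} to obtain the one-step identity \eqref{eq:path-clique.step}. It then walks along the chain $G_j=P^j(H,K_{L+1-j})$, $0\le j\le L$, deducing the Schur positivity of $G_{j+1}$ from that of $G_j$ and $H^j$ via
\[
(L-j)X_{G_{j+1}}=X_{G_j}+(L-j-1)X_{K_{L-j}}X_{H^j},
\]
until it reaches $G_L=H^L$.

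\textbf{Your route.} You apply \cref{thm:KPG} once, at $(l,m)=(0,L+1)$, and solve for $X_{H^L}$. Unrolling the paper's recursion gives exactly your identity
\[
X_{H^L}=\frac{X_{P^0(H,K_{L+1})}}{L!}+\sum_{i=2}^{L}(i-1)\,e_iX_{H^{L-i}},
\]
so both proofs certify the same positive decomposition.

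\textbf{Comparison.} Your version is shorter. It also makes visible that the hypothesis on $H^{L-1}$ is never used; in the paper this is hidden in the vanishing coefficient $L-j-1$ at $j=L-1$. The paper's version, in exchange, isolates the difference identity \eqref{eq:path-clique.step} as a standalone tool, which it reuses with $(l,m)=(0,2)$ in the proof of \cref{lem:s+.Crs}. It also shows along the way that every intermediate graph $P^j(H,K_{L+1-j})$ is Schur positive.
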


\begin{proof}
Let $l\ge0$ and $m\ge1$. By \cref{thm:KPG}, 
\begin{align}
\notag
mX_{P^{l+1}(H,\,K_m)}
-X_{P^l(H,\,K_{m+1})}
&=m!\brk4{\sum_{i=0}^{m-1}(1-i)e_iX_{H^{l+m-i}}
-\sum_{i=0}^{m}(1-i)e_iX_{H^{l+m-i}}}
\\
\label{eq:path-clique.step}
&=m!(m-1)e_mX_{H^l}.
\end{align}
Set $G_j=P^j(H,\,K_{L+1-j})$. 
For each $j=0, 1, \dots, L-1$, substituting $(l,m)=(j,\,L-j)$ yields
\[
(L-j)X_{G_{j+1}}
=X_{G_j}+(L-j-1)X_{K_{L-j}}X_{H^j}.
\]
Starting with the graphs $G_0=P^0(H,\,K_{L+1})$ and $H^0$, 
induction on $j$ proves that
every graph $G_{j+1}$ is Schur positive. Here the last term is Schur positive by \cref{prop:Pieri}. In particular, the $j=L-1$ cases gives the Schur positivity of $G_L=P^L(H,K_1)=H^L$.
\end{proof}

We next prove Schur positivity for the clique-extended graphs used
below.

\begin{lemma}\label{lem:s+.Crs}
For any integers $r\ge2$ and $1\le s\le r+1$, the graph
$S^{102}_{rs1}$ is Schur positive.
\end{lemma}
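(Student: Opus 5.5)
The plan is to realize $S^{102}_{rs1}$ as a tailed graph $H^2$ and apply the path--clique bootstrap (\cref{lem:path-clique.bootstrap}) with $L=2$.

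\textbf{Choice of $H$.} Let $H=P^1(K_r,K_s)$, rooted at the vertex $c$ of $K_s$ that is joined by the leg edge to $K_r$; equivalently $H=S^{100}_{rs1}$ rooted at its center. Then:
\begin{itemize}
\item $H^0=H$ is a barbell, which is $e$-positive by \cref{thm:barbell}, hence Schur positive.
\item $H^1$ is $H$ with one pendant vertex at $c$. That pendant vertex together with $c$ forms a $K_2$ through the center, so $H^1\cong S^{100}_{rs2}$.
\item $P^0(H,K_3)$ glues a triangle at $c$, so $P^0(H,K_3)\cong S^{100}_{rs3}$.
\item $H^2$ attaches a path of length $2$ at $c$, which is exactly $S^{102}_{rs1}$.
\end{itemize}
Thus the statement reduces to the Schur positivity of $S^{100}_{rs2}$ and $S^{100}_{rs3}$.

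\textbf{Applying \cref{thm:s+.100}.} In $S^{100}_{rst}$ the two cliques through the center play symmetric roles, so $S^{100}_{rst}\cong S^{100}_{rts}$. It therefore suffices to find an ordering of the last two indices for which $\min$ of the first two indices is at least the third.
\begin{itemize}
\item For $H^1\cong S^{100}_{rs2}\cong S^{100}_{r2s}$: if $s\ge2$, use $\min(r,s)\ge2$, valid since $r\ge2$. If $s=1$, use $\min(r,2)\ge1$.
\item For $S^{100}_{rs3}\cong S^{100}_{r3s}$: if $s\le 2$, use $\min(r,3)\ge s$, valid since $r\ge2$. If $s\ge3$ and $r\ge3$, use $\min(r,s)\ge3$.
\end{itemize}
The constraint $s\le r+1$ leaves exactly one pair not covered: $(r,s)=(2,3)$. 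There $S^{100}_{233}$ fails the hypothesis of \cref{thm:s+.100} in either ordering. In all other cases \cref{lem:path-clique.bootstrap} finishes the proof.

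\textbf{The exceptional case $(r,s)=(2,3)$.} This is the main obstacle, since the bootstrap base $P^0(H,K_3)$ is not covered by the earlier theorems. I would first try to show $S^{100}_{233}$ is Schur positive directly. Specialize \cref{prop:Y**0.e} to $(r,s,t)=(2,3,3)$, a graph on $7$ vertices, and apply the prefix Hall criterion (\cref{cor:s+.Hall.prefix}) to the few negative terms, exactly as in the proof of \cref{thm:s+.100}. The Hall inequalities there may fail only because $\delta=r-s<0$.

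If that fails, I would abandon the bootstrap for this one graph and compute the Schur expansion directly. The graph $S^{102}_{231}$ has only $7$ vertices. Its $e$-expansion can be obtained from \cref{thm:KPG} with $m=3$: this expresses $X_{P^0(H,K_3)}$ in terms of $X_{H^0}, X_{H^1}, X_{H^2}$, so it can be solved for $X_{H^2}$ after computing the barbell-type $X_{H^1}$ and $X_{H^2}$ pieces. Alternatively one can simply expand by computer algebra, as was done for $(r,s,t)=(6,6,4)$ in \cref{thm:s.001}. One then converts the finite $e$-expansion to Schur functions by the Pieri rules (\cref{prop:Pieri}) and checks that every coefficient is nonnegative.
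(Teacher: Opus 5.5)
Your reduction is the same as the paper's. With $H=S^{100}_{rs1}$ rooted at the center, the case $L=2$ of the bootstrap is just the identity $2X_{H^2}=X_{P^0(H,K_3)}+X_{K_2}X_H$, i.e.\ \cref{eq:path-clique.step} with $(l,m)=(0,2)$. Your check of $H^1$ is harmless but not actually used, since the step $j=L-1$ carries the coefficient $L-j-1=0$. Your case analysis via \cref{thm:s+.100} is correct for every $(r,s)\ne(2,3)$.

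The gap is the exceptional case $(r,s)=(2,3)$, which you leave open. Your claim that $P^0(H,K_3)=S^{100}_{233}$ is not covered by earlier results is false. In $S^{100}_{233}$ the center lies in two triangles and has one leg edge to a $K_2$, so $S^{100}_{233}\cong S^{001}_{332}$. Since $(r,s,t)=(3,3,2)$ satisfies $(r,s)=(2t-1,\,t+1)$, this graph is Schur positive by \cref{thm:s.001}. That single observation completes the proof, and it is how the paper finishes.

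None of your substitutes is an argument as it stands. \cref{prop:Y**0.e} assumes $\min(r,s)\ge t$, which fails for $(2,3,3)$, so you may not specialize it there. The formula you would obtain is
\[
7e_{(7)}+11e_{(6,1)}+13e_{(5,2)}+4e_{(5,1,1)}+8e_{(4,2,1)}+2e_{(3,2,2)}-7e_{(4,3)}-2e_{(3,3,1)},
\]
and it does happen to be correct. However, you would have to justify it some other way, for instance via \cref{prop:S001} applied to $S^{001}_{332}$.

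Even granting that formula, the prefix Hall argument from the proof of \cref{thm:s+.100}, applied to $P-N$ alone, breaks down here. The demands $7e_{(4,3)}+2e_{(3,3,1)}$ can draw only on $2e_{(3,2,2)}+5e_{(4,2,1)}$, and in fact $[s_{(2^3,1)}](P-N)=-2$. So you must also borrow the $3e_{(4,2,1)}$ from the $e$-positive first sum.

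Your route through \cref{thm:KPG} with $m=3$ is circular. It gives $X_{P^0(H,K_3)}=2X_{H^2}-2e_2X_{H^0}$, relating the unknown base graph to the unknown target. The computer-algebra check would succeed, but it is only promised, not carried out.
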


\begin{proof}
Let $G=S^{102}_{rs1}$.
Let $H=S^{100}_{rs1}$, rooted at its center.  
For $(r,s)=(5,4)$, these graphs are illustrated in
\cref{fig:G-H-541}.

\begin{figure}[ht]
\centering
\begin{tikzpicture}[x=0.85cm,y=0.85cm]
\begin{scope}
\coordinate (c) at (0,0);
\coordinate (a0) at (-1,0);
\coordinate (a1) at (-1.45,0.75);
\coordinate (a2) at (-2.2,0.5);
\coordinate (a3) at (-2.2,-0.5);
\coordinate (a4) at (-1.45,-0.75);
\coordinate (b1) at (0,1);
\coordinate (b2) at (0.85,1.2);
\coordinate (b3) at (1.45,0.45);
\coordinate (d1) at (0.85,-0.3);
\coordinate (d2) at (1.7,-0.6);
\draw[edge] (c)--(a0);
\draw[edge] (a0)--(a1)--(a2)--(a3)--(a4)--cycle;
\draw[edge] (a0)--(a2) (a0)--(a3) (a1)--(a3)
  (a1)--(a4) (a2)--(a4);
\draw[edge] (c)--(b1)--(b2)--(b3)--cycle;
\draw[edge] (c)--(b2) (b1)--(b3);
\draw[edge] (c)--(d1)--(d2);
\foreach \vertex in {c,a0,a1,a2,a3,a4,b1,b2,b3,d1,d2}
  \node[ball] at (\vertex) {};
\node[font=\scriptsize,anchor=south east,inner sep=0pt]
  at ($(c)+(-0.05,0.05)$) {$c$};
\node[font=\small] at (0,-1.15) {(a) $G=S^{102}_{541}$};
\end{scope}

\begin{scope}[xshift=6cm]
\coordinate (c) at (0,0);
\coordinate (a0) at (-1,0);
\coordinate (a1) at (-1.45,0.75);
\coordinate (a2) at (-2.2,0.5);
\coordinate (a3) at (-2.2,-0.5);
\coordinate (a4) at (-1.45,-0.75);
\coordinate (b1) at (0,1);
\coordinate (b2) at (0.85,1.2);
\coordinate (b3) at (1.45,0.45);
\draw[edge] (c)--(a0);
\draw[edge] (a0)--(a1)--(a2)--(a3)--(a4)--cycle;
\draw[edge] (a0)--(a2) (a0)--(a3) (a1)--(a3)
  (a1)--(a4) (a2)--(a4);
\draw[edge] (c)--(b1)--(b2)--(b3)--cycle;
\draw[edge] (c)--(b2) (b1)--(b3);
\foreach \vertex in {c,a0,a1,a2,a3,a4,b1,b2,b3}
  \node[ball] at (\vertex) {};
\node[font=\scriptsize,anchor=south east,inner sep=0pt]
  at ($(c)+(-0.05,0.05)$) {$c$};
\node[font=\small] at (0,-1.15) {(b) $H=S^{100}_{541}$};
\end{scope}
\end{tikzpicture}
\caption{For $(r,s)=(5,4)$, the graphs $G=S^{102}_{541}$ and
$H=S^{100}_{541}$.  The center $c$ of~$H$ is the root of $H$, and
$G\cong H^2$.}
\label{fig:G-H-541}
\end{figure}

Taking
$(l,m)=(0,2)$ in \cref{eq:path-clique.step}, and observing that
$P^1(H,K_2)\cong H^2=G$, gives
\[
2X_G
=X_{J}+X_{K_2}X_H, 
\]
where $J=S^{100}_{rs3}$.
It remains to check that both $H$ and $J$ are Schur positive.
Since $\min(r,s)\ge1$, the graph $H$
is Schur positive by \cref{thm:s+.100}.
We next consider $J$.
If $\min(r,s)\ge3$, then $J$ is Schur positive by
\cref{thm:s+.100}. If $s\le2$, then
$J\cong S^{100}_{r3s}$, and \cref{thm:s+.100} again applies.
The only remaining possibility is $(r,s)=(2,3)$. In this case,
$J\cong S^{001}_{332}$,
which is Schur positive by \cref{thm:s.001}, using the regime
$(r,s)=(2t-1,\,t+1)$
with $t=2$. This completes the proof.
\end{proof}

The next lemma gives the Schur positivity of further clique-extended graphs.

\begin{lemma}\label{lem:s+.Drq}
For any integers $r\ge2$ and $0\le q\le r$, the graph
$S^{1q2}_{r11}$ is Schur positive.
\end{lemma}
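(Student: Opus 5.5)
The plan is to repeat the strategy used for \cref{lem:s+.Crs}: apply the two-step relation \cref{eq:path-clique.step} once, which reduces the claim to two simpler graphs. The second of these is then handled by the path--clique bootstrap (\cref{lem:path-clique.bootstrap}) with induction on~$q$.

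\emph{Step 1: reduce to $J_q$.} Let $G=S^{1q2}_{r11}$ and let $H=S^{1q0}_{r11}$, rooted at its center~$c$. The third leg of $G$ is a path of length $2$ ending in $K_1$, so $G\cong H^2\cong P^1(H,K_2)$. Taking $(l,m)=(0,2)$ in \cref{eq:path-clique.step} gives
\[
2X_G=X_{J_q}+X_{K_2}X_H,
\]
where $J_q=P^0(H,K_3)\cong S^{10q}_{r31}$. This is the graph in which the center lies in a triangle, $K_r$ hangs from the center by one edge, and a pendant path of length~$q$ also leaves the center.

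The graph $H$ is the lollipop $P^{q+1}(K_r,K_1)$, which is $e$-positive by \cref{thm:barbell}. Hence $X_{K_2}X_H$ is Schur positive by \cref{prop:Pieri}. It therefore suffices to show that $J_q$ is Schur positive for $0\le q\le r$.

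\emph{Step 2: prove $J_q$ is Schur positive by induction on $q$.} Let $H'=S^{100}_{r31}$, rooted at its center, so that $J_q=(H')^q$.

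For the base case $q=0$, the graph $J_0=H'=S^{100}_{r31}$ is Schur positive by \cref{thm:s+.100}, since $t=1$.

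For $q\ge1$, apply \cref{lem:path-clique.bootstrap} with $L=q$. The graphs $(H')^0,\dots,(H')^{q-1}$, that is $J_0,\dots,J_{q-1}$, are Schur positive by the induction hypothesis. It remains to check that
\[
P^0(H',K_{q+1})=S^{100}_{r,3,q+1}
\]
is Schur positive. Both $K_3$ and $K_{q+1}$ contain the center, so $S^{100}_{r,3,q+1}\cong S^{100}_{r,q+1,3}$, and \cref{thm:s+.100} applies whenever $\min(r,3)\ge q+1$ or $\min(r,q+1)\ge3$. There are two ranges.
\begin{itemize}
\item If $q\le 2$ and $r\ge q+1$, the first condition holds.
\item If $q\ge 2$ and $r\ge3$, the second condition holds, because $q+1\ge3$.
\end{itemize}
Given $q\le r$, the only triple left uncovered is $r=q=2$. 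In that case the graph is $S^{100}_{233}$: two triangles share the center and $K_2$ is attached to the center by an edge. This graph is isomorphic to $S^{001}_{332}$, which is Schur positive by \cref{thm:s.001} in the regime $(r,s)=(2t-1,t+1)$ with $t=2$, exactly as at the end of the proof of \cref{lem:s+.Crs}.

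\emph{Main obstacle.} The difficulty is the bookkeeping of the isomorphisms: confirming that $G\cong P^1(H,K_2)$, that $J_q\cong(H')^q$, and that $P^0(H',K_{q+1})\cong S^{100}_{r,3,q+1}$ with the correct roots. One must also check that the small exceptional triple is the only one not covered directly by \cref{thm:s+.100}. The hypothesis $q\le r$ is exactly what prevents $K_{q+1}$ from exceeding $K_r$ in a way that \cref{thm:s+.100} cannot handle.
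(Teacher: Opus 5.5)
Your proof is correct. It uses the same ingredients as the paper: the $(l,m)=(0,2)$ case of \cref{eq:path-clique.step}, \cref{lem:path-clique.bootstrap}, \cref{thm:s+.100}, and the single exceptional graph $S^{100}_{233}\cong S^{001}_{332}$ from the regime $(r,s)=(2t-1,t+1)$ of \cref{thm:s.001}. The difference is that you apply the two reductions in the opposite order.

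The paper bootstraps first. It takes $H=S^{102}_{r11}$ and runs \cref{lem:path-clique.bootstrap} on the $q$-leg, so the clique extensions it needs are $P^0(H,K_{q+1})=S^{102}_{r,q+1,1}$. These are supplied by \cref{lem:s+.Crs}, whose proof contains the $(0,2)$ step and applies it once for each clique size $s=q+1$.

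You instead apply the $(0,2)$ step once, directly to the target graph. The companion term is then a lollipop, already $e$-positive by \cref{thm:barbell}, and the remaining graph is $S^{10q}_{r31}$. Your bootstrap runs on the simpler base $S^{100}_{r31}$, whose extensions $S^{100}_{r,3,q+1}$ fall under \cref{thm:s+.100} directly.

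The paper's route keeps this proof to a few lines by delegating to \cref{lem:s+.Crs}. Yours makes \cref{lem:s+.Crs} unnecessary here and gives, as a by-product, the Schur positivity of $S^{10q}_{r31}$ for $0\le q\le r$.

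One small correction: your closing remark overstates the role of $q\le r$. For $r\ge3$ your case analysis works for every $q\ge0$, since $\min(r,q+1)\ge3$ once $q\ge2$. The hypothesis matters only when $r=2$. There, for example, $q=3$ would require $S^{100}_{234}\cong S^{001}_{432}$, which is not Schur positive by \cref{thm:s.001}.
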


\begin{proof}
Let $H=S^{102}_{r11}$, rooted at the center.  
Then $H^q=S^{1q2}_{r11}$ for any $q\in\mathbb Z_{\ge 0}$.
By \cref{lem:s+.Crs}, $H^0$ is Schur positive, 
and the graph
$P^0(H,K_{q+1})=S^{102}_{r(q+1)1}$ is Schur positive
for any $0\le q\le r$.  Applying
\cref{lem:path-clique.bootstrap}, 
we can show successively that $H^1$, $\dots$, $H^q$ are Schur positive.
\end{proof}

\citet[Theorem~4.3]{TW23X}
proved the Schur positivity of two kinds of three-legged spiders.

\begin{theorem}[\citeauthor{TW23X}]\label{thm:s+.Sa21}
For any integer $a\ge 1$, 
the spiders $S(a,2,1)$ and $S(a, 4, 1)$ are Schur positive.
\end{theorem}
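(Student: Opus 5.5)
We prove the two families separately, using the path--clique bootstrap (\cref{lem:path-clique.bootstrap}) to grow one leg and the clique-spider classifications for the base graphs it requires. Throughout, a pendant vertex at the center is a copy of $K_2$ attached by a leg of length $0$. A pendant path of length $2$ is a leg of length $1$ ending in a $K_2$.

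\emph{The family $S(a,2,1)$.} Let $H=S(2,1)\cong P_4$, rooted at its degree-$2$ vertex, so that $H^L=S(L,2,1)$. The base graph $H^0=P_4$ is $e$-positive by \cref{thm:barbell}, hence Schur positive. For $L\ge1$, the graph $P^0(H,K_{L+1})$ is a clique $K_{L+1}$ with one pendant vertex and one pendant path of length~$2$ at a common vertex. It is therefore $S^{010}(K_{L+1},K_2,K_2)$, which after ordering the clique sizes is $S^{010}_{(L+1)22}$ or, when $L=1$, $S^{100}_{2\,2\,2}$-type. Both are Schur positive by \cref{thm:s+.100} and \cref{cor:s+.010}. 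Induction on $L$ with \cref{lem:path-clique.bootstrap} then gives every $S(a,2,1)$.

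\emph{The family $S(a,4,1)$.} Here the base graphs of the same scheme are cliques carrying a leg of length~$4$, which \cref{tab:basic.classification} does not cover. So I would apply the bootstrap twice, nesting the clique-size recursion. For $m\ge2$, let $G'_m=P^0(K_m,K_2)=S^{000}_{m21}$, rooted at the clique vertex carrying the pendant.
\begin{itemize}
\item The outer bootstrap uses $H=S(4,1)$ rooted at its center. It requires $P^0(H,K_{L+1})=(G'_{L+1})^4$ for $1\le L<a$.
\item The inner bootstrap proves $(G'_m)^4$ Schur positive. It requires $(G'_m)^0,\dots,(G'_m)^3$ and $P^0(G'_m,K_5)=S^{000}(K_m,K_5,K_2)$.
\end{itemize}
The pieces of the inner bootstrap are handled as follows.
\begin{itemize}
\item $(G'_m)^0$ is a barbell and is $e$-positive by \cref{thm:barbell}.
\item $(G'_m)^1=S^{000}_{m22}$ is Schur positive by \cref{thm:s+.000} once $m\ge3$.
\item $(G'_m)^2\cong S^{010}_{m22}$ is covered by \cref{cor:s+.010}.
\item $(G'_m)^3$ comes from one more bootstrap step. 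It uses the base $P^0(G'_m,K_4)=S^{000}$ with cliques $m,4,2$.
\end{itemize}
By \cref{thm:s+.000}, the clique-spider with cliques $m,5,2$ is Schur positive when $m\ge9$, and also when $m\le3$ (then $5\ge 2m-1$). The one with cliques $m,4,2$ is Schur positive when $m\ge7$ or $m\le2$.

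The main obstacle is the intermediate range of clique sizes, roughly $4\le m\le8$, where these $S^{000}$ base graphs fail to be Schur positive. The bootstrap then gives no information about $(G'_m)^3$ or $(G'_m)^4$ for those $m$. Consequently, the outer induction for $S(a,4,1)$ has to be seeded differently for small~$L$.

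I would first try to absorb these cases using the exact identity \cref{eq:path-clique.step} directly, rather than the lemma. The idea is to write
\[
(G'_m)^4=\tfrac1{m}\bigl(\text{shorter/larger-clique terms}\bigr)+\text{Schur-positive } e_m X_{\cdot}
\]
and combine the negative Schur coefficients of a failing $S^{000}$ base with the positive ones coming from the $e_mX$ term via \cref{prop:Pieri}. If that fails, I would verify the finitely many graphs $(G'_m)^j$ with $m\le8$, $j\le4$ by direct Schur expansion from \cref{thm:3spider:clique}. Checking this finite verification is the step where the argument could break down.
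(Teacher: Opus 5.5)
\textbf{The $S(a,2,1)$ half is correct.} With $H=S(2,1)\cong P_4$ rooted at a middle vertex, you get $H^L=S(L,2,1)$. The base $H^0=P_4$ is $e$-positive. For every $L\ge1$ the graph $P^0(H,K_{L+1})=S^{010}_{(L+1)22}$ satisfies $L+1\ge2\ge2$, so it is Schur positive by \cref{cor:s+.010}. Then \cref{lem:path-clique.bootstrap} finishes, and none of the ingredients depends on the statement itself. The paper does not prove this theorem; it imports it from the literature. Only this $S(a,2,1)$ half is used later, in \cref{thm:s+.Sab2} for $b=1$.

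\textbf{The $S(a,4,1)$ half is not proved.} Your nested bootstrap certifies $(G'_m)^4$ only for $m\ge9$. The uncovered range is $2\le m\le8$, not roughly $4\le m\le8$:
\begin{enumerate}
\item For $m=2$, the graph $(G'_2)^1=S^{000}_{222}$ is the claw, so the inner bootstrap never starts.
\item For $m=3$, the base $S^{000}_{432}$ fails $r\ge st-1$, so $(G'_3)^3$ is never reached.
\end{enumerate}
The outer bootstrap needs $(G'_m)^4$ for every $2\le m\le a+1$. Moreover, at $L=1$ the lemma is vacuous for any rooted $H$, since $P^0(H,K_2)\cong H^1$. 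Your base there is $(G'_2)^4=S(4,1,1)=H^1$ itself. So, as written, you have proved Schur positivity of no spider $S(a,4,1)$. The ``absorb via the exact identity'' idea is not yet an argument. The missing step is exactly the verification that the seven graphs $(G'_m)^4$, $2\le m\le8$, are Schur positive. If any one of them failed, the outer induction would collapse for all larger $a$.

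\textbf{How to close the gap.} The check is feasible. Apply \cref{thm:3spider:clique} with $G=H=K_1$ and $(g,h,k)=(4,1,0)$, which gives
\[
\frac{X_{(G'_m)^4}}{(m-1)!}
=\sum_{z=0}^{m-1}e_{m-1-z}X_{P_{6+z}}
-\sum_{\substack{x+y+z=m-2\\ x,y,z\ge0}}(1-x)\,e_xX_{P_{5+y}}X_{P_{2+z}}.
\]
Then apply the matrix criterion (\cref{thm:s+.matrix}). Two small cases show what to expect:
\begin{enumerate}
\item For $m=2$, the only negative term of $X_{S(4,1,1)}$ is $-3e_{(3,2,2)}$, and it is absorbed by $4e_{(3,2,1,1)}+e_{(2,2,2,1)}$.
\item For $m=3$, the only negative term of $X_{(G'_3)^4}/2$ is $-7e_{(3,3,2)}$, and it is absorbed by $4e_{(3,3,1,1)}+3e_{(3,2,2,1)}$ with no slack at all.
\end{enumerate}
Because the $m=3$ case is that tight, the computation has to be actually carried out for each remaining $m\le8$ before this half of the statement can be regarded as proved.
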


\begin{theorem}\label{thm:s+.Sab2}
For any integers $a, b\ge 1$, the spider $S(a,b,2)$ is Schur positive.
\end{theorem}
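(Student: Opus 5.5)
We realize $S(a,b,2)$ as a tailed graph $H^q$ with $H=S^{102}_{r11}$, and then induct along the leg of length $b$ using \cref{lem:s+.Drq}.

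Write $K_1$ for a single vertex. A clique $K_r$ joined to the center by an edge of length one, with a pendant path, is simply a leg. Precisely, $S^{1q2}_{r11}$ is obtained from a center $c$ as follows. A path of length $1$ goes to one vertex of $K_r$. A path of length $q$ goes to $K_1$, so it is a leg of length $q$. A path of length $2$ goes to $K_1$, so it is a leg of length $2$. Taking $r=2$, the first branch is the path $c$--$v$--$w$, a leg of length $2$. Thus $S^{1q2}_{211}\cong S(2,q,2)$, which only covers the case where one leg has length $2$.

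For a long leg of length $a$, use instead the rooted graph $H_a=S(a,2)$ with a pendant leg, rooted at the center. Then $S(a,b,2)=H_a^{\,b}$, where $H_a^{\,l}$ means attaching a path of length $l$ at the center. The plan is to apply \cref{lem:path-clique.bootstrap} with $H=S(a,2)$ rooted at the center and $L=b$. By induction on $b$, this requires two things.
\begin{enumerate}
\item The graphs $H^0,\dots,H^{b-1}$, namely $S(a,2)$ (a path) and $S(a,j,2)$ for $j<b$, are Schur positive.
\item The graph $P^0(H,K_{b+1})$ is Schur positive. This is the spider-conjoined graph consisting of a center with legs of lengths $a$ and $2$ and a clique $K_{b+1}$ containing the center.
\end{enumerate}
The base cases $b=1$ (where $S(a,1,2)=S(a,2,1)$ is covered by \cref{thm:s+.Sa21}) and $H^0=P_{a+3}$ (positive by \cref{thm:barbell}) are immediate. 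The symmetry $S(a,b,2)=S(b,a,2)$ lets us assume $a\ge b$ if helpful.

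The main step is the Schur positivity of $P^0(S(a,2),K_{b+1})$, and I would handle it by a second bootstrap. View this graph as $F^{a}$, where $F$ is the graph $K_{b+1}$ with a pendant path of length $2$ at a root vertex $c$, rooted at $c$; attaching a path of length $a$ at $c$ gives the graph in question. Applying \cref{lem:path-clique.bootstrap} to $F$ with $L=a$ requires the following.
\begin{enumerate}
\item The graphs $F^0,\dots,F^{a-1}$ are Schur positive.
\item The graph $P^0(F,K_{a+1})$ is Schur positive.
\end{enumerate}
The graph $P^0(F,K_{a+1})$ consists of two cliques $K_{a+1}$ and $K_{b+1}$ sharing $c$, together with a leg of length $2$. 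This is the graph $S^{002}_{(a+1)(b+1)1}$, which I expect to reduce via \cref{eq:path-clique.step} with $(l,m)=(0,2)$ to $S^{001}$- or $S^{000}$-type graphs. Concretely, $2X_{S^{002}}=X_{S^{00?}\text{ with a }K_3}+e_2X_{S^{000}_{(a+1)(b+1)1}}$, giving $S^{000}_{(a+1)(b+1)3}$ together with the barbell $S^{000}_{(a+1)(b+1)1}$.

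The obstacle is that $S^{000}_{rs3}$ is Schur positive only when $r\ge 3s-1$, by \cref{thm:s+.000}, which fails in general. So this route must be modified. The paper's stated preparatory results \cref{lem:s+.Crs,lem:s+.Drq} suggest the right base is $S^{1q2}_{r11}$, whose leg structure is a clique $K_r$ at distance $1$ from the center. Therefore I would instead root at the end of the leg: take $H=S^{1q2}_{r11}$ with $r=a$ as the clique end, and then bootstrap the clique $K_r$ into a path by applying \cref{lem:path-clique.bootstrap} on that side. The needed intermediate graphs are then exactly those of \cref{lem:s+.Drq}, with the constraint $q\le r$ supplied by the assumption $a\ge b$.

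The hardest point is verifying that the bootstrap hypotheses, namely all shorter tails together with a clique-conjoined base, always land inside the Schur-positive regimes of \cref{thm:s+.100,thm:s.001,lem:s+.Crs,lem:s+.Drq}. Small exceptional cases ($b\le 2$, tiny $a$) would be checked separately using \cref{thm:s+.Sa21} or direct expansion.
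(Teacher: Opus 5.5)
Your first route, bootstrapping along the $b$-leg from $H=S(a,2)$ rooted at the center, needs the base $P^0(S(a,2),K_{b+1})=S^{0a2}_{(b+1)11}$. None of the available results covers this graph, and you rightly abandon the route once the second bootstrap lands on $S^{000}_{(a+1)(b+1)3}$.

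Your last paragraph points at the right mechanism, but as written it is not a proof, because the bootstrap instance is misidentified. You take the rooted graph to be tailed as $H=S^{1q2}_{r11}$. Every tail of a graph containing $K_a$ still contains $K_a$, so for $a\ge3$, \cref{lem:path-clique.bootstrap} applied to that graph can never output $S(a,b,2)$.

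The missing step is to let $S^{1b2}_{a11}$ play the role of the clique-conjoined base rather than of $H$. Take $H=S(1,b,2)$, rooted at the leaf of its leg of length~$1$. Then $H^j=S(j+1,b,2)$, and $P^0(H,K_a)$ glues that leaf to a vertex of $K_a$, which gives exactly $S^{1b2}_{a11}$. The lemma is applied with $L=a-1$, so the clique that becomes a path is $K_a$ on the long leg, not anything on the $b$-leg.

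With this set-up, the verification you call the hardest point is immediate, and no extra small cases arise. Assume $a\ge b$ and induct on $a+b$.
For $b=1$, $S(a,1,2)\cong S(a,2,1)$ is covered by \cref{thm:s+.Sa21}.
For $b\ge2$, the base $S^{1b2}_{a11}$ is Schur positive by \cref{lem:s+.Drq} with $(r,q)=(a,b)$; this is where $a\ge2$ and $b\le a$ enter.
The tails $H^0,\dots,H^{a-2}$, i.e.\ $S(j+1,b,2)$ with $j+1\le a-1$, come from the induction hypothesis, not from \cref{lem:s+.Drq}. This works because $(j+1)+b<a+b$, using $S(x,y,2)\cong S(y,x,2)$ when $j+1<b$.
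Then \cref{lem:path-clique.bootstrap} gives $S(a,b,2)=H^{a-1}$, and the case $b=2$ needs no separate treatment.
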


\begin{proof}
By symmetry, suppose that $a\ge b$. We proceed by induction on $a+b$.
If $b=1$, then $S(a,b,2)\cong S(a,2,1)$,  which is Schur positive by \cref{thm:s+.Sa21}. Suppose that $b\ge2$.
Let $H=S(1,b,2)$, rooted at the leaf of its shortest leg.
Then 
\[
H^j=S(j+1,\,b,\,2).
\]
For any $0\le j\le a-2$, the graph $H^j$ is Schur positive by the induction hypothesis.
On the other hand, the graph $P^0(H,K_a)=S^{1b2}_{a11}$
is Schur positive by \cref{lem:s+.Drq}. Applying
\cref{lem:path-clique.bootstrap} with $L=a-1$ gives the Schur
positivity of $S(a,b,2)=H^{a-1}$.
\end{proof}

\bibliographystyle{abbrvnat-bysame}
\bibliography{csf}

\end{document}